\documentclass[a4paper,11pt]{article}
\usepackage{dsfont}
\usepackage{amssymb}
\usepackage{latexsym}
\usepackage{amsmath}
\DeclareMathOperator*{\esssup}{ess\,sup}
\usepackage{comment}
\usepackage{amsthm}
\usepackage[dvips]{graphicx}
\usepackage{geometry}
\usepackage{layout} 
\usepackage{enumitem}
\usepackage{bm}
\usepackage{bbm} 
\usepackage[bbgreekl]{mathbbol} 
\usepackage{authblk}
\usepackage{setspace} 
\usepackage[dvipsnames]{xcolor}
\usepackage[colorlinks=true, linkcolor=blue, urlcolor=blue]{hyperref}

\usepackage[style=ieee, natbib=true]{biblatex} 
\newif\ifrs
\rstrue
\ifrs \usepackage{mathrsfs} \fi  
\newif\ifcol
\coltrue 

\newtheorem{theorem}{Theorem}[section]

\newtheorem{lemma}[theorem]{Lemma}

\newtheorem{remark}[theorem]{Remark}
\newtheorem{example}[theorem]{Example}
\numberwithin{equation}{section}
\newtheorem{theorem*}{Theorem}
\newtheorem{ass*}[theorem*]{Assumption}
\newtheorem{note*}[theorem*]{Note}
\newtheorem{lemma*}[theorem*]{Lemma}
\newtheorem{definition*}[theorem*]{Definition}
\newtheorem{proposition*}[theorem*]{Proposition}
\newtheorem{corollary*}[theorem*]{Corollary}
\newtheorem{remark*}[theorem*]{Remark}
\newtheorem{example*}[theorem*]{Example}
\numberwithin{equation}{section}
\newcommand{\bbx}{{\mathbb x}}

\newcommand{\bbz}{{\mathbb z}}
\newcommand{\checkN}{N}
\newcommand{\consta}{C_1}%

\newcommand{\constc}{C_0}%
\newcommand{\constd}{{c_1}}%

\newcommand{\consth}{{XC_4}}
\newcommand{\consti}{{C_1'}}%
\newcommand{\constj}{{C_1'''}}%
\newcommand{\constk}{{c_2}}
\newcommand{\constl}{{C_6}}
\newcommand{\constm}{{C_7}}
\newcommand{\constn}{{C_{2,r}}}
\newcommand{\consto}{{C_{3,r}}}
\newcommand{\constp}{{C_4}}
\newcommand{\constq}{{C_{5}}}%
\newcommand{\constr}{{c_{3}}}
\newcommand{\constt}{{C_1''}}%
\newcommand{\constu}{{C_{9}}}
\newcommand{\constv}{{c_1 + \beta}}%
\newcommand{\constz}{{C_{10}}}
\newcommand{\lG}{{l_{\mathrm{g}}}}
\newcommand{\lS}{{l_{\mathrm{s}}}}
\newcommand{\lN}{{l_{\mathrm{n}}}}
\newcommand{\lA}{{l_{\mathrm{A}}}}

\coltrue
\ifcol
\newcommand{\colorr}{\color[rgb]{0.8,0,0}}

\newcommand{\colorn}{\color[rgb]{1,1,1}}

\newcommand{\colory}{\color{yellow}}

\else

\newcommand{\colorr}{\color{black}}

\newcommand{\colorn}{\color{black}}
\newcommand{\colory}{\color{black}}

\fi
\newif\ifcol
\colfalse
\ifcol

\else
\fi
\newif\ifcol
\colfalse
\ifcol

\else
\fi
\newif\ifcol
\colfalse
\ifcol

\else
\fi
\newif\ifcol
\coltrue
\ifcol

\else
\fi
\excludecomment{en-text}
\includecomment{jp-text}
\includecomment{comment}
\excludecomment{comment}
\def\koko{{\coloroy{koko}}}
\def\bd{\begin{description}}
\def\ed{\end{description}}

\def\D2{\bbD_{2,\infty-}}

\def\D{{\bf D}}

\def\calb{{\cal B}}
\def\calc{{\cal C}}

\def\calf{{\cal F}}

\def\calh{{\cal H}}
\def\cali{{\cal I}}

\def\caln{{\cal N}}

\def\ds{\displaystyle}
\def\yeq{\>=\>}
\def\yleq{\>\leq\>}
\def\ygeq{\>\geq\>}

\def\sfk{{\sf k}}

\def\sfd{{\sf d}}
\def\sfp{{\sf p}}

\def\ep{\epsilon}
\def\half{\frac{1}{2}}

\def\down{\downarrow}

\def\nn{\nonumber}
\def\be{\begin{equation}}
\def\ee{\end{equation}}
\def\bea{\begin{eqnarray}}
\def\eea{\end{eqnarray}}
\def\beas{\begin{eqnarray*}}
\def\eeas{\end{eqnarray*}}
\def\bi{\begin{itemize}}
\def\ei{\end{itemize}}
\def\im{\item}
\def\bd{\begin{description}}
\def\ed{\end{description}}
\newcommand{\bbD}{{\mathbb D}}

\newcommand{\bbF}{{\mathbb F}}

\newcommand{\bbN}{{\mathbb N}}

\newcommand{\bbR}{{\mathbb R}}
\newcommand{\bbS}{{\mathbb S}}

\newcommand{\bbX}{{\mathbb X}}

\newcommand{\thL}{\hat \theta_\epsilon^L}
\newcommand{\thLi}{\hat \theta_{\epsilon, ij}^L}

\renewcommand{\colory}{\color{yellow}}
\renewcommand{\koko}{{\colory koko}}
\newcommand{\wh}{\widehat}
\newcommand{\wt}{\widetilde}

\newcommand{\gray}{\color[rgb]{0.5,0.5,0.5}}

\makeatletter
\newcommand{\setitem}[1]{\item[#1]\def\@currentlabel{#1}}
\makeatother

\begin{document}

\title{
Network inference for SDEs with diverging dimension under small noise
\date{}

\let\thefootnote\relax\footnotetext{
This work was in part supported by 
Japan Science and Technology Agency CREST JPMJCR2115;  
Japan Society for the Promotion of Science Grants-in-Aid for Scientific Research 
No. 23H03354 (Scientific Research);  
Forefront Physics and Mathematics Program to Drive Transformation (FoPM), a World-leading Innovative Graduate Study (WINGS) Program, the University of Tokyo; 
and by a Cooperative Research Program of the Institute of Statistical Mathematics. 
}
}
\author[2]{Francesco Iafrate}
\author[1]{Nakahiro Yoshida}
\author[3]{Stefano M. Iacus}
\affil[3]{ \small European Centre for Algorithmic Transparency
\footnote{ Joint Research Centre of the European Commission, Seville, Spain}}

\affil[2]{ \small Department of Mathematics, University of Hamburg
\footnote{University of Hamburg, Bundesstr. 55, 20146, Hamburg, Germany. e-mail: francesco.iafrate@uni-hamburg.de}}

\affil[1]{\small Graduate School of Mathematical Sciences, University of Tokyo
\footnote{Graduate School of Mathematical Sciences, University of Tokyo: 3-8-1 Komaba, Meguro-ku, Tokyo 153-8914, Japan. e-mail: nakahiro@ms.u-tokyo.ac.jp}}
\affil[1]{CREST, Japan Science and Technology Agency
}
\maketitle
\ \\
\begin{abstract}
We consider inference for network stochastic differential equations in a small-noise, high-dimensional regime, where the diffusion coefficient vanishes with $\ep\to0$ while the network size and parameter dimension may diverge. We develop a minimum-distance estimation framework based on a deterministic measurement model that may omit nuisance terms in the drift, and derive a non-asymptotic error bound decomposing the estimation error into stochastic fluctuation, nuisance discrepancy, and identifiability terms.

For a linearly parametrized interaction model under repeated measurements, we obtain explicit non-asymptotic bounds on the nuisance discrepancy and the identifiability gap in terms of graph connectivity and noise level. Under suitable mixing and balance conditions, this yields consistency and convergence rates for the estimator. We further study graph recovery via adaptive Lasso. The penalized estimator is shown to inherit the rate of the preliminary estimator and to recover the true interaction graph consistently. These results provide a theoretical basis for parameter estimation and edge selection in high-dimensional network SDEs with nuisance drift components.
\end{abstract}

\noindent\textit{Keywords:} small-noise asymptotics; high-dimensional inference; minimum-distance estimation; identifiability; repeated measurements; adaptive Lasso; graph recovery.
\ \\

\section{Introduction}


Many dynamical systems of interacting components are naturally represented through a network structure, where each component evolves according to its own state and the states of neighboring components. Examples arise in biology, neuroscience, finance, and interacting particle systems, where the main inferential goal is often not only the estimation of drift parameters, but also the recovery of the underlying interaction pattern, especially for large-scale systems (\cite{stapor2022mini}). 

In recent years, network interaction structures in systems of stochastic differential equations have been studied. 
Network stochastic differential equations (N-SDEs) provide a continuous-time framework for modeling systems of interacting random processes, where the interaction pattern is represented by a graph. Let $G=(V,E)$ be a graph with node set $V=[\sfd]$, let $X_t^i\in\mathbb R$ denote the state of node $i$ at time $t$ and let $N_i$ be the neighbors of node $i$. 
Such models have been introduced in the context of autoregressive models in 
\cite{nar}, and extended to the nonlinear case in \cite{armillotta2023nonlinear}. Network dependence in SDEs has been studied in 
\cite{ou-graph1, ou-graph2}. In \cite{iafrate2024ergodic} the authors introduced a general  nonlinear N-SDE of the form
\[
dX_t^i
=
\left(
\underbrace{b_{ii}(X_t^i;\beta)}_{\text{momentum effect}}
+
\sum_{j\in N_i}
\underbrace{b_{ij}(X_t^i,X_t^j;\beta)}_{\text{network effect}}
\right)\,dt
+
\underbrace{\sigma_i(X_t^i,\alpha)}_{\text{node volatility}}\,dW_t^i, \qquad i = 1, \ldots, \sfd
\]
where $b_{ii}$ describes the intrinsic dynamics of node $i$, the functions $b_{ij}$ encode the interaction with neighboring nodes $j\in N_i$, $\sigma_i$ represent the node volatility, $W_1, \ldots W_d$ are independent Brownian motions, and the graph is possibly directed. The parameters $\theta=(\alpha, \beta)$ are unknown. This formulation is well suited to statistical questions concerning both parameter estimation and recovery of the underlying interaction graph.

In this paper, we study this problem in a small-noise regime, where the diffusion coefficient vanishes with a parameter $\ep\to0$, while the dimension of the system $\sfd_\ep$ is allowed to diverge and with it the neighborhood structure and the number of the interaction parameters $\theta_\ep$.  We also allow the drift to contain nuisance terms that are not directly estimable.
Our goal is to infer the interaction parameters and  the graph structure. Supposing that a continuous trajectory over $[0, T]$ has been observed, we adopt a minimum-distance approach based on a \textit{measurement model}: the observed stochastic system is compared with the solution of a deterministic ODE in which the nuisance part of the drift is omitted. 

Our first contribution is a general non-asymptotic error bound for minimum-distance estimators. Under weak conditions we show that the estimation error decomposes
into three components: a stochastic fluctuation term induced by the diffusion, a nuisance discrepancy term measuring the effect of the misspecified part of the drift, and an identifiability term measuring the separation of the measurement model away from the true parameter. 

The main difficulty in exploiting the general bound is the control of identifiability in high dimension.  To overcome this difficulty, we focus on a linearly parametrized principal interaction model and introduce a repeated-measurements scheme. 
Within this structured model, we derive explicit controls for the nuisance discrepancy and the identifiability term. This makes it possible to express the general error bound in terms of explicit quantities depending on the graph connectivity, the number of repeated measurements, the noise level, and a concentration event. Under suitable mixing assumptions and balance conditions linking graph growth, sample size, and noise magnitude, we obtain consistency of the minimum-distance estimator. 
In particular, we assume that the graph connectivity grows like $\mathcal O(\log \sfd_\epsilon)$. 
We then show that stronger scaling assumptions yield explicit convergence rates, including both a slow logarithmic rate and a faster polynomial rate. In particular, in the latter case we show for the estimator $\wh{\theta}_\ep$ a convergence of the form
\[
    \ep^{-\gamma}\,\rho_\ep(\wh{\theta}_\ep,\theta_\ep^*) \to 0
    \qquad\text{in probability as }\ep\to0
\]
where $\rho_\epsilon$ is a suitable metric on the parameter space. 
Our final contribution concerns graph recovery. After obtaining a preliminary estimator through the minimum-distance procedure, we apply an adaptive Lasso in order to induce sparsity and estimate the edge set. We show that the penalized estimator inherits the $\epsilon^{-\gamma}$ rate of the preliminary estimator under a suitable control of the penalty on active coordinates. Moreover, under additional separation conditions on the adaptive weights and a beta-min condition on the true nonzero coefficients, we prove support recovery consistency.

The present work is related to several lines of literature. On one side, it connects with inference for small-noise diffusions and trajectory-based estimation methods, where deterministic approximations of stochastic systems are used to derive consistent estimators, see  \cite{kutoyants2012identification}. See also  \cite{sorensen2003small} for a different approach based on a quasi-likelihood estimator in the setting of high-frequency observations. 
On the other side, it is related to high-dimensional sparse estimation and adaptive $\ell_1$-penalization,  \cite{zou2006adaptive}, extended to diverging dimension in \cite{huang2008adaptive}. 
Starting with \cite{de2012adaptive}, in recent years there has been a flourishing line of research papers dealing with regularization and high-dimensionality for diffusion processes 
\cite{suzuki2020penalized, de2021regularized,  amorino2025sampling, de2025adaptive, trottner2023concentration, marushkevych2025consistent,
nakakita2025sparse,
dexheimer2026sparse, bayraktar2026sparse}. In particular lasso-type estimators for small noise diffusions have been studied
in \cite{de2018penalized}.
The novelty here lies in combining these two perspectives in a network SDE framework with diverging dimension and nuisance misspecification.

The paper is organized as follows. Section \ref{sec:general-err} establishes a general error bound for minimum-distance estimators under a broad measurement-model framework. Section \ref{sec:ident} specializes the analysis to a linearly parametrized interaction model, introduces repeated measurements, and derives explicit identifiability and nuisance controls. The same section also proves consistency and convergence rates under suitable balance conditions. Section \ref{sec:lasso} studies adaptive-Lasso post-processing for graph recovery and proves both estimation consistency and support recovery. The proofs are collected in Section  \ref{sec:proofs}.

\section{Network SDE Model}

Let $(\Omega,\calf,\bbF,P)$ be a stochastic basis, with
$\bbF=(\calf_t)_{t\in[0,T]}$, and write $[k]=\{1,\ldots,k\}$.
For each $\ep\in(0,1]$, we consider network SDEs $(X^{\ep,i}_t)_{t\in[0,T], i\in[\sfd_\ep]}$ 
which are progressively measurable with respect to $\bbF$.
The interaction structure is encoded by a graph
$
G_\ep=(V_\ep,E_\ep),
\, V_\ep=[\sfd_\ep],
$ 
with adjacency matrix $\mathcal A_\ep$. Denote by $N_i^\epsilon$ the parents (incoming connections) of node $i$, possibly including node $i$ itself, corresponding to self-dynamics. 
For each node $i$, let
$\caln_i^\ep$ denote the collection of indices of the components entering the drift of
$X^{\ep,i}$, possibly including repeated indices. We write
\[
\bbX_t^{\ep,i}
=
(X_t^{\ep,k})_{k\in\caln_i^\ep},
\qquad
\sfk_i^\ep=\dim(\bbX_t^{\ep,i}).
\]
Thus $\sfk_i^\ep$ is the number of coordinates on which the drift of the $i$th component depends.
The collection $\caln_i^\ep$ contains the elements of $N_i^\epsilon$, possibly repeated
\footnote{Dealing with $\caln_i^\ep$ instead of the actual set of neighbors $N_i$ allows for a general simplified treatment, while maintaining the same intuition. In plain language we still refer to $\sfk_i^\ep$ as ``the number of neighbors" of node $i$.}.
See the examples below.

The parameter is denoted by
$\theta_\ep\in\Theta_\ep\subset\bbR^{\pi_\ep}$, 
where $(\Theta_\ep,\rho_\ep)$ is a compact metric space. The metric $\rho_\ep$ is not required to be Euclidean. For instance, one may take
\[
\rho_\ep(\theta^1,\theta^2)
=
\max_{\ell\in[\pi_\ep]}|\theta^1_\ell-\theta^2_\ell|,
\]
or, for $p\in[1,\infty)$ and a normalizing factor $a_\ep>0$,
\[
\rho_\ep(\theta^1,\theta^2)
=
a_\ep
\left\{
\sum_{\ell\in[\pi_\ep]}|\theta^1_\ell-\theta^2_\ell|^p
\right\}^{1/p} \qquad 
(\theta^i=(\theta^i_\ell)_{\ell\in[\pi_\ep]},\> i=1,2).
\]
We denote by $\theta_\ep^*\in\mathrm{Int}(\Theta_\ep)$ the true parameter value.

The observed network process is assumed to satisfy the system of stochastic differential equations
\begin{equation}\label{202508070135}
dX^{\ep,i}_t
=
S^{\ep,i}(t,\bbX^{\ep,i}_t,\theta_\ep^*)\,dt
+
\sigma^{\ep,i}(t)\,dw^i_t,
\qquad
X^{\ep,i}_0=x^{\ep,i}_0,
\qquad i\in[\sfd_\ep],
\end{equation}
where $w^1,\ldots,w^{\sfd_\ep}$ are independent standard Wiener processes,
the diffusion coefficients $\sigma^{\ep,i}$ are progressively measurable,
and the initial values $x^{\ep,i}_0$ are $\calf_0$-measurable random variables.
The drift function $S^{\ep,i}$ may depend on time, on the local state vector
$\bbX_t^{\ep,i}$, and on the parameter $\theta_\ep^*$.

The present formulation is Markovian in the local state vector $\bbX_t^{\ep,i}$.
A path-dependent extension, where the drift depends on the whole trajectory
$\bbX^{\ep,i}=(\bbX_s^{\ep,i})_{s\in[0,t]}$, is also possible, but would require working on an appropriate path space and is not pursued here.

For later estimates, we also define a quantity measuring how often a component appears in the drift equations of other components. For each $i\in[\sfd_\ep]$, set
\[
{\sf o}_\ep^i
=
\sum_{j\in[\sfd_\ep]}1_{\caln_j^\ep}(i),
\]
where $1_{\caln_j^\ep}(i)=1$ if $i\in\caln_j^\ep$ and $0$ otherwise. Thus
${\sf o}_\ep^i$ counts the number of drift equations in which the component $X^{\ep,i}$ appears. We define
\begin{equation}\label{202508070403}
{\sf o}_\ep=\max_{i\in[\sfd_\ep]}{\sf o}_\ep^i,
\qquad
\sfk_\ep=\max_{i\in[\sfd_\ep]}\sfk_i^\ep.
\end{equation}
The quantity $\sfk_\ep$ controls the maximal neighborhood size, while
${\sf o}_\ep$ controls the maximal feedback given by a component across the system.
\begin{example}\rm
A standard network drift decomposes into a momentum effect and network interaction terms:
\begin{equation}\label{202508140256}
S^{\ep,i}(t,\bbX^{\ep,i}_t,\theta)
=
V^{ii}(t,X^{\ep,i}_t,\theta)
+
\sum_{j\in N_i^\ep\setminus\{i\}}
V^{ij}(t,X^{\ep,i}_t,X^{\ep,j}_t,\theta),
\end{equation}
where $N_i^\ep\subset[\sfd_\ep]$ is the neighborhood of node $i$.
In this case,\[
\bbX^{\ep,i}_t=\big(X_t^{\ep,i},(X_t^{\ep,i})_{j\in N^{\ep}_i\setminus\{i\}},(X_t^j)_{j\in N_i\setminus\{i\}}\big).
\]
\end{example}

\begin{example}\rm
A small-interaction model is obtained by scaling the network interaction term:
\begin{equation}\label{202508140257}
S^{\ep,i}(t,\bbX^{\ep,i}_t,\theta)
=
V^{ii}(t,X^{\ep,i}_t,\theta)
+
\ep^\alpha
\sum_{j\in N_i^\ep\setminus\{i\}}
V^{ij}(t,X^{\ep,i}_t,X^{\ep,j}_t,\theta),
\end{equation}
for some $\alpha>0$.
\end{example}

\begin{example}\label{202508140427}\rm
A linearly parametrized principal-interaction model is given by
\begin{equation}\label{202508071013}
S^{\ep,i}(t,\bbX^{\ep,i}_t,\theta_\ep)
=
\sum_{j\in N_i^\ep}
\theta_\ep^{ij}\,
\psi^{ij}(X^{\ep,i}_t,X^{\ep,j}_t)
+
\ep^\alpha
\sum_{j\in N_i^\ep}
V_\ep^{ij}(t,X^{\ep,i}_t,X^{\ep,j}_t).
\end{equation}
Here 
$\bbX^{\ep,i}_t=\big((X^{\ep,i}_t)_{j\in N^\ep_i},(X^{\ep,j}_t)_{j\in N^\ep_i},(X^{\ep,i}_t)_{j\in N^\ep_i},(X^{\ep,j}_t)_{j\in N^\ep_i}\big)$, and 
$\sfk^\ep_i=4|N^\ep_i|$.
Here the coefficients
\[
\theta_\ep=(\theta_\ep^{ij})_{j\in N_i^\ep,\ i\in[\sfd_\ep]}
\]
are the interaction parameters of interest. The functions $\psi^{ij}$ are known feature functions, whereas the functions $V_\ep^{ij}$ represent nuisance components of the drift when they are unknown.
\end{example}

\paragraph{Measurement model and nuisance terms.}

In the linearly parametrized model \eqref{202508071013}, the nuisance functions
$V_\ep^{ij}$ are not assumed to be known or consistently estimable. The inference procedure therefore uses only the parametric part of the drift, encoded by the known functions $\psi^{ij}$, to estimate $\theta_\ep$.

More generally, in this work we allow the true drift functions $S^{\ep,i}$ to contain arbitrary nuisance terms. We assume that inference is based on a \textit{measurement model}, described by functions
\[
\wt S^{\ep,i}:[0,T]\times\bbR^{\sfk_i^\ep}\times\Theta_\ep\to\bbR,
\qquad i\in[\sfd_\ep],
\]
which represent the part of the drift used for estimating the parameter.
For instance, in the case of \eqref{202508071013}, the measurement drift considered is
\begin{equation}\label{202508160503}
\wt S^{\ep,i}(t,\bbz^{\ep,i},\theta)
=
\sum_{j\in\check N_i^\ep}
\theta^{ij}\,
\psi^{ij}(z^{\ep,i}_t,z^{\ep,j}_t).
\end{equation}
Thus the measurement model preserves the known principal interaction structure and omits the nuisance component. We will study this model in detail in \autoref{sec:ident}.

Throughout the paper we assume the following assumptions. 
\begin{enumerate}[label=(A\arabic*)]

\item \label{as:lip} The functions $S^{\ep,i}$ and $\wt{S}^{\ep,i}$ are continuous, and 
there exists a constant $\consta$ such that 
\beas
|\bbS^{\ep,i}(t,x,\theta)|
&\leq& 
\consta \sum_{k=1}^{\sfk_i^\ep}(1+|x_k|)
\eeas
and
\bea\label{202508070344}
|\bbS^{\ep,i}(t,x,\theta)-\bbS^{\ep,i}(t,y,\theta)|
&\leq& 
\consta \sum_{k=1}^{\sfk_i^\ep}|x_k-y_k|
\eea
for all $t\in[0,T]$, $x=(x_k)$, $y=(y_k)\in\bbR^{\sfk_i^\ep}$, $\theta_\ep\in\Theta_\ep$, $i\in[\sfd_\ep]$, and $\bbS^{\ep,i}=S^{\ep,i},\wt{S}^{\ep,i}$. 

\item \label{as:diff}

Suppose that 
\beas 
\big|\sigma^{\ep,i}(t)\big|
&\leq&
\constz\ep\bigg(1+\sum_{k=1}^{\sfk_i^\ep}\big|(\bbX^{\ep,i}_t)_k\big|\bigg).
\eeas
%
a.s. 
for every $i\in[\sfd_\ep]$, $\ep\in(0,1]$. 
\end{enumerate}

For $\ep\in(0,1]$ and $A>0$, let 
\beas 
T(\ep,A) &=& \inf\big\{t\in[0,T];\>\max_{i\in[\sfd_\ep]}|\sigma^{\ep,i}(t)|>A\ep\big\}\wedge T.
\eeas

\ref{as:lip} is a standard assumption concerning Lipschitz continuity and linear growth, with an explicit dependence on $\sfk_i^\ep$. 
\ref{as:diff} allows for a general diffusion term with a mild growth, possibly depending on the neighbors. Our control of the diffusion term will depend on the level crossing probability for $\sigma^{\ep,i}(t)$, expressed in terms of $T(\ep,A)$. This generalizes the classical small noise regime. 
Assumption \ref{as:diff} includes the special case where the diffusion term is bounded by $\epsilon$, 
\begin{equation}\label{eq:bounded-noise}
\max_{i\in[\sfd_\ep]} \sup_{t \in [0, T]} |\sigma^{\ep,i}(t)| \leq \epsilon.
\end{equation}

\section{General error bound for minimum distance estimators} \label{sec:general-err}
The goal of this section is to obtain an error bound for the minimum distance estimator \eqref{eq:md-est} defined below, under minimal assumptions on the data-generating model, allowing for a general nuisance scheme. 
This kind of estimator has been considered in the context of small diffusion problems in \cite{kutoyants1994minimum, kutoyants2012identification}.

We start by describing the estimation
procedure for learning the parameters 
of model \eqref{202508070135}.
The idea is to minimize the distance
between the observed noisy data and 
the corresponding noiseless (ODE) system. 
Moreover, in our setting, the distance is
 measured with respect to the 
measurement model, which neglects the nuisance terms. 
More precisely, 
the measurement model corresponds to the ODE
\bea\label{202508071058}
\frac{d}{dt}z^{\ep,i}_t(\theta_\ep) \yeq \wt{S}^{\ep,i}(t,\bbz^{\ep,i}_t(\theta_\ep),\theta_\ep),\quad z^{\ep,i}_0(\theta_\ep)=x^{\ep,i}_0
\qquad(i\in[\sfd_\ep]).
\eea
Under Condition \ref{as:lip}, the system of equations
has a unique solution $(z^{\ep,i}_t(\theta_\ep))_{i\in[\sfd_\ep],t\in[0,T]}$. 
Each $z^{\ep,i}_t$ is an $\calf_0$-measurable random variable since $x^{\ep,i}_0$ is $\calf_0$-measurable.

Therefore, we introduce the following loss functions: 
\beas 
\calh_\ep^r(\theta_\ep)
&=& 
\sup_{t\in[0,T]}\bigg\{(\sfd_\ep)^{-1}\sum_{i\in[\sfd_\ep]}\big|X^{\ep,i}_t-z^{\ep,i}_t(\theta_\ep)\big|^r\bigg\}^{1/r}\qquad(r\in[1,\infty))
\eeas
and 
\beas 
\calh_\ep^r(\theta_\ep)
&=& 
\sup_{t\in[0,T]}\max_{i\in[\sfd_\ep]}\big|X^{\ep,i}_t-z^{\ep,i}_t(\theta_\ep)\big|\qquad(r=\infty).
\eeas
Since $[0,T]\times\Theta_\ep$ is compact, the map 
$[0,T]\times\Theta_\ep\ni(t,\theta_\ep)\mapsto\big|X^{\ep,i}_t-z^{\ep,i}_t(\theta_\ep)\big|\in\bbR_{\geq0}$ is uniformly continuous, 
and hence the maps $\calh^r_\ep:\Theta_\ep\to\bbR_{\geq0}$ are continuous for $r\in[1,\infty]$.

A measurable map $\wh{\theta}^r_\ep$ is called a \textit{minimum distance} estimator 
for $\calh_\ep^r$ if 
\bea \label{eq:md-est}
\calh_\ep^r\big(\wh{\theta}^r_\ep\big)
&=& 
\inf_{\theta_\ep\in\Theta_\ep}\calh_\ep^r(\theta_\ep). 
\eea

Denote by $\big(x^{\ep,i}_t(\theta_\ep)\big)_{i\in[\sfd_\ep],t\in[0,T]}$ a unique solution of the system of ordinary differential equations 
\beas 
\frac{d}{dt}x^{\ep,i}_t(\theta_\ep)
&=& S^{\ep,i}(t,\bbx^{\ep,i}_t(\theta_\ep),\theta_\ep),\quad x^{\ep,i}_0(\theta_\ep)\yeq x^{\ep,i}_0\qquad(i\in[\sfd_\ep]). 
\eeas
The solution is labeled with $\ep$ since the dimension varies with $\ep$. 
Each map $x^{\ep,i}:[0,T]\times\Theta_\ep\ni(t,\theta_\ep)\mapsto x^{\ep,i}_t(\theta_\ep)\in\bbR$ is continuous. 
Write $x^{\ep,i}_t=x^{\ep,i}_t(\theta_\ep^*)$. 

Let 
\beas 
m_r^\ep(t)
&=& 
\sup_{s\in[0,t]}(\sfd_\ep)^{-1}\sum_{i\in[\sfd_\ep]}\big|X^{\ep,i}_s-x^{\ep,i}_s\big|^r\quad(r\in[1,\infty))
\eeas
and 
\beas 
m_\infty^\ep(t)
&=& 
\sup_{s\in[0,t]}\max_{i\in[\sfd_\ep]}\big|X^{\ep,i}_s-x^{\ep,i}_s\big|.
\eeas
Let 
\beas 
R^\ep_r(t)
&=& 
\sup_{s\in[0,t]}(\sfd_\ep)^{-1}\sum_{i\in[\sfd_\ep]}\bigg|\int_0^s\sigma^{\ep,i}(v)dw^i_v\bigg|^r
\eeas
for $r\in[1,\infty)$, and 
\beas 
R^\ep_\infty(t) 
&=& 
\sup_{s\in[0,t]}\max_{i\in[\sfd_\ep]}\bigg|\int_0^s\sigma^{\ep,i}(v)dw^i_v\bigg|.
\eeas
The following preliminary estimate provides a bound for the distance 
$m^\epsilon_r$ between the observed path and the (noiseless) parametric ODE into a quantity depending on the graph connectivity through $\sfk_\epsilon, \sf{o}_\ep$ and the stochastic remainder $R^\epsilon_r$.

\begin{lemma}\label{lem:ini-bound}
Under \ref{as:lip}, for $t\in[0,T]$, 
\bea\label{202508070505}
m^\ep_r(t)
&\leq&
2^{r-1}R^\ep_r(t)\exp\bigg(2^{r-1}{\consta}^rT^{r-1}t(\max_{i\in[\sfd_\ep]}\sfk^\ep_i)^{r-1}{\sf o}_\ep\bigg)
\eea
for $r\in[1,\infty)$ and 
\bea\label{202508070516}
m_\infty^\ep(t)
&\leq&
R^\ep_\infty(t)\exp\bigg(t\consta\max_{i\in[\sfd_\ep]}\sfk^\ep_i\bigg).
\eea
\end{lemma}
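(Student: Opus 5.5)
The plan is a Gronwall argument applied to the integral form of the difference between the SDE and the noiseless ODE at the true parameter. For each $i$, subtracting the ODE for $x^{\ep,i}_t=x^{\ep,i}_t(\theta^*_\ep)$ from \eqref{202508070135} (same initial value) gives
\[
X^{\ep,i}_s-x^{\ep,i}_s=\int_0^s\big\{S^{\ep,i}(v,\bbX^{\ep,i}_v,\theta^*_\ep)-S^{\ep,i}(v,\bbx^{\ep,i}_v,\theta^*_\ep)\big\}dv+\int_0^s\sigma^{\ep,i}(v)dw^i_v .
\]
By the Lipschitz bound \eqref{202508070344} in \ref{as:lip}, the drift difference is at most $\consta\sum_{k\in\caln_i^\ep}|X^{\ep,k}_v-x^{\ep,k}_v|$, the sum running over the $\sfk_i^\ep$ entries of $\caln_i^\ep$ (with repetitions).

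For $r=\infty$ the argument is direct. Bound each summand by $\sup_{u\le v}\max_k|X^{\ep,k}_u-x^{\ep,k}_u|$, so the drift integral is at most $\consta\max_i\sfk_i^\ep\int_0^s m^\ep_\infty(v)dv$. Taking $\sup_{s\le t}\max_i$ yields
\[
m^\ep_\infty(t)\le R^\ep_\infty(t)+\consta\max_i\sfk^\ep_i\int_0^t m^\ep_\infty(v)dv .
\]
Since $R^\ep_\infty$ is nondecreasing, Gronwall's inequality gives \eqref{202508070516}.

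For $r\in[1,\infty)$ the steps are as follows.
\begin{enumerate}
\item Use $|a+b|^r\le 2^{r-1}(|a|^r+|b|^r)$.
\item Apply H\"older in time, $\big|\int_0^s f\big|^r\le s^{r-1}\int_0^s|f|^r\le T^{r-1}\int_0^s|f|^r$.
\item Apply H\"older (or Jensen) to the finite sum, $\big(\sum_{k\in\caln_i^\ep}a_k\big)^r\le(\sfk_i^\ep)^{r-1}\sum_{k\in\caln_i^\ep}a_k^r$.
\end{enumerate}
This gives
\[
|X^{\ep,i}_s-x^{\ep,i}_s|^r\le 2^{r-1}\Big|\int_0^s\sigma^{\ep,i}dw^i\Big|^r+2^{r-1}\consta^rT^{r-1}\sfk_\ep^{r-1}\int_0^s\sum_{k\in\caln_i^\ep}|X^{\ep,k}_v-x^{\ep,k}_v|^rdv .
\]

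The key step is then averaging over $i\in[\sfd_\ep]$ and exchanging the order of summation:
\[
\sum_i\sum_{k\in\caln_i^\ep}|\Delta_k|^r=\sum_k{\sf o}^k_\ep|\Delta_k|^r\le{\sf o}_\ep\sum_k|\Delta_k|^r .
\]
This is where ${\sf o}_\ep$ enters, in place of a second factor of $\sfk_\ep$. Next bound $(\sfd_\ep)^{-1}\sum_k|\Delta_k(v)|^r\le m^\ep_r(v)$ and take $\sup_{s\le t}$, which gives
\[
m^\ep_r(t)\le 2^{r-1}R^\ep_r(t)+2^{r-1}\consta^rT^{r-1}\sfk_\ep^{r-1}{\sf o}_\ep\int_0^tm^\ep_r(v)dv .
\]
Gronwall with the nondecreasing forcing term $2^{r-1}R^\ep_r$ yields \eqref{202508070505}.

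The main care point is the supremum. The supremum over $s$ of the average must be taken after the pointwise inequality, and $R^\ep_r(t)$ is defined as the supremum of the average, not the average of suprema. Both match the left side directly, so the bound closes.

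A further technical point is that Gronwall requires $m^\ep_r$ to be finite and measurable in $t$. It is, because the paths are continuous on the compact interval $[0,T]$.
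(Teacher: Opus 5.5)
Your proposal is correct and follows the paper's proof essentially step for step: the same $2^{r-1}$ splitting, H\"older in time and over the $\sfk^\ep_i$ neighbor coordinates, the exchange of summation that produces ${\sf o}_\ep$, and Gronwall with the nondecreasing forcing term $2^{r-1}R^\ep_r(t)$ (resp.\ $R^\ep_\infty(t)$). One minor point, which applies equally to the paper: when $\caln_i^\ep$ contains repeated indices, your identity $\sum_i\sum_{k\in\caln_i^\ep}|X^{\ep,k}_v-x^{\ep,k}_v|^r=\sum_k{\sf o}^k_\ep|X^{\ep,k}_v-x^{\ep,k}_v|^r$ holds only if ${\sf o}^k_\ep$ counts multiplicities, whereas its definition via $1_{\caln_j^\ep}(k)$ counts each equation only once.
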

The proof is given in \autoref{sec:proof-ini-bound}.
According to (\ref{202508070505}) and (\ref{202508070516}), 
if ${\sf o}_\ep$  is comparable with $\max_{i\in[\sfd_\ep]}\sfk^\ep_i$, then 
the exponents of $m_1(t)$ and $m_\infty(t)$ are comparable, and 
they are smaller than the exponent of any $m_r(t)$ with $r\in(1,\infty)$. 

The bound of (\ref{202508070505}) depends on the shape of the graph. 
The number ${\sf o}_\ep$ can be large if some $i\in[\sfd_\ep]$ has a large number of outbound edges. 
If some $i\in[\sfd_\ep]$ gives feedback to all other nodes, then ${\sf o}_\ep=\sfd_\ep$. 
The next step is to provide a tail bound for the stochastic remainder term. 

\begin{lemma}\label{lem:stoch-rem-bound}
Under \ref{as:diff}, for $r \in [1, \infty)$
\bea\label{202508070607}
P\big[R^\ep_r(T(\ep,A))^{\frac 1 r}>x\mid\calf_0\big]
&\leq&
P\big[R^\ep_\infty(T(\ep,A))>x\mid\calf_0\big]
\notag 
\\
& \leq &
\constc\sfd_\ep\frac{\sqrt{2TA^2\ep^2}}{x}\exp\bigg(-\frac{x^2}{2TA^2\ep^2}\bigg)\qquad(x>0)
\eea
with $\constc=2\pi^{-1/2}$. 

\end{lemma}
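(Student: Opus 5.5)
The first inequality is deterministic. For every $s$,
\[
(\sfd_\ep)^{-1}\sum_{i\in[\sfd_\ep]}\Big|\int_0^s\sigma^{\ep,i}dw^i\Big|^r\le\max_{i}\Big|\int_0^s\sigma^{\ep,i}dw^i\Big|^r .
\]
Taking the supremum over $s\le T(\ep,A)$ and then the $r$-th root gives $R^\ep_r(T(\ep,A))^{1/r}\le R^\ep_\infty(T(\ep,A))$, so the event on the left is contained in the event on the right.

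For the second inequality I will use a union bound over $i\in[\sfd_\ep]$. This reduces the claim to showing, for each fixed $i$,
\[
P\Big[\sup_{s\le T(\ep,A)}|M^i_s|>x\,\Big|\,\calf_0\Big]\le \constc\frac{\sqrt{v}}{x}e^{-x^2/(2v)},\qquad v=TA^2\ep^2,
\]
where $M^i_s=\int_0^s\sigma^{\ep,i}dw^i$. Consider the stopped martingale $N_s=M^i_{s\wedge T(\ep,A)}$. Its quadratic variation satisfies $\langle N\rangle_\infty\le \int_0^{T(\ep,A)}|\sigma^{\ep,i}|^2dv\le TA^2\ep^2$. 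This holds because $|\sigma^{\ep,i}(v)|\le A\ep$ for $v<T(\ep,A)$, by the definition of the hitting time; if $\sigma$ is not continuous I will argue almost everywhere in $v$, or replace $\sigma$ by $\sigma 1_{\{|\sigma|\le A\ep\}}$ on $[0,T(\ep,A))$.

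Next I apply Dambis--Dubins--Schwarz conditionally on $\calf_0$, enlarging the space if necessary. This gives $N_s=B_{\langle N\rangle_s}$ for a Brownian motion $B$ independent of $\calf_0$. Since $w^i$ is independent of $\calf_0$, the conditional law is handled by working with the regular conditional probability. Then
\[
\sup_s|N_s|\le \sup_{u\le v}|B_u| .
\]

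By the reflection principle,
\[
P\Big[\sup_{u\le v}|B_u|>x\Big]\le 2P\Big[\sup_{u\le v}B_u>x\Big]=4P[B_v>x].
\]
The Mills ratio bound gives
\[
P[B_v>x]\le \frac{\sqrt v}{x\sqrt{2\pi}}e^{-x^2/(2v)} .
\]
Together these yield the factor $4/\sqrt{2\pi}=2\sqrt2/\sqrt\pi$. The stated constant is $\constc\sqrt{2v}=2\pi^{-1/2}\sqrt 2\sqrt v$, which is exactly this. Multiplying by $\sfd_\ep$ from the union bound completes the argument.

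The main obstacle is justifying the time change conditionally on $\calf_0$ with a random stopping time. A backup that avoids DDS is the exponential martingale $\exp(\lambda N_s-\lambda^2\langle N\rangle_s/2)$ combined with Doob's inequality. However, this only gives the bound $2e^{-x^2/(2v)}$ without the $\sqrt v/x$ prefactor, so I would rely on DDS to obtain the sharp form stated.
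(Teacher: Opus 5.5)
Your proposal is correct and follows essentially the same route as the paper. Both arguments use the pointwise comparison $R^\ep_r(t)^{1/r}\le R^\ep_\infty(t)$, a Dambis--Dubins--Schwarz time change of the stopped integral on an enlarged space with a Brownian motion independent of $\calf_0$ (so the conditioning drops out), domination by $\sup_{u\le TA^2\ep^2}|B_u|$, a Gaussian tail bound, and a union bound over $i\in[\sfd_\ep]$. The only difference is that you derive the tail bound from the reflection principle and Mills' ratio, confirming the constant $\constc\sqrt{2v}=4\sqrt{v}/\sqrt{2\pi}$ with $v=TA^2\ep^2$, whereas the paper simply quotes it as known.
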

\noindent
The proof is given in \autoref{sec:proof-stoch-rem}.

Next we focus on the error due to 
nuisance and identifiability. 

\paragraph{Nuisance discrepancy.}
The quantity ${m}_r^\ep$ defined 
above referred to the distance between the observable process $(X^{\ep, i}_t, i \in [\sfd_\ep])$ and the ``exact'' ODE $(x^{\ep, i}_t, i \in [\sfd_\ep])$. 
In the case of nuisance terms we have instead access to the measurement 
model $(z^{\ep, i}_t, i \in [\sfd_\ep])$, therefore we 
define $\wt{m}_r^\ep(t)$ and $\wt{m}_\infty^\ep(t)$ by 
\beas 
\wt{m}_r^\ep(t)
&=& 
\sup_{s\in[0,t]}(\sfd_\ep)^{-1}\sum_{i\in[\sfd_\ep]}\big|X^{\ep,i}_s-z^{\ep,i}_s\big|^r\quad(r\in[1,\infty))
\eeas
and 
\beas 
\wt{m}_\infty^\ep(t)
&=& 
\sup_{s\in[0,t]}\max_{i\in[\sfd_\ep]}\big|X^{\ep,i}_s-z^{\ep,i}_s\big|,
\eeas
respectively, where $z^{\ep,i}_s=z^{\ep,i}_s(\theta_\ep^*)$. 
Then, for $r\in[1,\infty)$, we have 
\beas
\big|\wt{m}_r^\ep(t)^{1/r}-m_r^\ep(t)^{1/r}\big|
\yleq
\sup_{s\in[0,t]}\bigg\{(\sfd_\ep)^{-1}\sum_{i\in[\sfd_\ep]}\big|x^{\ep,i}_s-z^{\ep,i}_s\big|^r\bigg\}^{1/r}
\>=:\>\Delta_r(\ep,t),
\eeas
and the \textit{nuisance discrepancy} can be measured as  
\bea\label{eq:delta-r-def}
\Delta_r^\ep=\sup_{t\in[0,T]}\Delta_r(\ep,t). 
\eea
Similarly, 
\beas
\big|\wt{m}_\infty^\ep(t)-m_\infty^\ep(t)\big|
\yleq
\sup_{s\in[0,t]}\max_{i\in[\sfd_\ep]}\big|x^{\ep,i}_s-z^{\ep,i}_s\big|
\>=:\>\Delta_\infty(\ep,t),
\eeas
and we set 
\bea\label{eq:delta-def}
\Delta_\infty^\ep=\Delta_\infty(\ep,T).
\eea

\paragraph{Identifiability gap.}
Let $U_\ep(\nu)=\{\theta_\ep\in\Theta_\ep;\>\rho_\ep(\theta_\ep,\theta_\ep^*)<\nu\big\}$ and $U_\ep(\nu)^c=\Theta_\ep\setminus U_\ep(\nu)$ for $\nu>0$. 
For $r\in[1,\infty]$, define $g_r^\ep(\nu)$ by 
\bea \label{eq:gnu-def}
g_r^\ep(\nu)
&=& 
\inf_{\theta_\ep\in U_\ep(\nu)^c}
\sup_{t\in[0,T]}(\sfd_\ep)^{-1}\sum_{i\in[\sfd_\ep]}\big|z^{\ep,i}_t(\theta_\ep)-z^{\ep,i}_t(\theta_\ep^*)\big|^r\quad(r\in[1,\infty))
\eea
and 
\bea \label{eq:gnu-inf-def}
g_\infty^\ep(\nu)
&=& 
\inf_{\theta_\ep\in U_\ep(\nu)^c}
\sup_{t\in[0,T]}\max_{i\in[\sfd_\ep]}\big|z^{\ep,i}_t(\theta_\ep)-z^{\ep,i}_t(\theta_\ep^*)\big|.
\eea
{We remark that $g_r^\ep(\nu)$ and $g_\infty^\ep(\nu)$ are $\calf_0$-measurable random variables. }%
These terms are standard in the literature. 
They represent an \textit{identifiability gap}.
If $g_r^\ep(\nu)$ is small for large $\nu$, it means that our measurement model cannot effectively
distinguish between different parameters. 

In preparation for the main result of this section we introduce the following quantities. 
For $r\in[1,\infty)$ and $A>0$, define an $\calf_0$-measurable random variable $\Xi_r^\ep(\nu,A)$ as 
\bea\label{202508070650}
\Xi_r^\ep(\nu,A)
&=&
2\constn A^{-1}\ep^{-1}(g^\ep_r(\nu)^{1/r}-2\Delta_r^\ep)\exp\bigg(-\consto(\max_{i\in[\sfd_\ep]}\sfk^\ep_i)^{r-1}{\sf o}_\ep\bigg)
\eea
with $\constn=2^{-\frac{7}{2}+\frac{1}{r}}T^{-\half}$ and $\consto=2^{r-1}r^{-1}{\consta}^rT^r$. 
Similarly, for $A>0$, 
define an $\calf_0$-measurable random variable $\Xi_\infty^\ep(\nu,A)$ as 
\bea\label{202508070739} 
\Xi_\infty^\ep(\nu,A)
&=& 
\constp A^{-1}\ep^{-1}(g^\ep_\infty(\nu)-2\Delta^\ep_\infty)\exp\bigg(-\constq\max_{i\in[\sfd_\ep]}\sfk^\ep_i\bigg)
\eea
with $\constp=2^{-\frac{3}{2}}T^{-\half}$ and $\constq=T\consta$.


The following theorem is the main generic finite-sample result of the paper. It applies to a broad class of minimum-distance estimators
 $\wh{\theta}_\ep^r$, $r\in[1,\infty]$ under nuisance misspecification and weak identifiability control.

\begin{theorem}\label{thm:err-bound}
Suppose that \ref{as:lip} and \ref{as:diff} are satisfied. Then 
\bea\label{202508140415}
P\big[\rho_\ep(\wh{\theta}_\ep^r,\theta_\ep^*)\geq\nu\big]
&\leq&
2\constc\sfd_\ep e^{-\xi^2}+P\big[\Xi^\ep_r(\nu,A)<\xi\big]+P\big[T(\ep,A)<T\big]
\eea
for all $\ep\in(0,1]$, $\nu>0$, $\xi>0$, $A>0$ and $r\in[1,\infty]$. 
\end{theorem}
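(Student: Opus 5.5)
The plan is the standard minimum-distance argument: reduce the event $\{\rho_\ep(\wh\theta^r_\ep,\theta^*_\ep)\ge\nu\}$ to a lower bound on the stochastic remainder, and then apply Lemma \ref{lem:ini-bound} and Lemma \ref{lem:stoch-rem-bound} conditionally on $\calf_0$. I treat $r\in[1,\infty)$; the case $r=\infty$ is identical, using the $\infty$-versions of the bounds.

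\emph{Step 1: deterministic reduction.} Write $\|\cdot\|_r$ for the normalized $\ell^r$ norm over $i\in[\sfd_\ep]$, and write $\calh^r_\ep(\theta)=\sup_t\|X_t-z_t(\theta)\|_r$. On the event $\rho_\ep(\wh\theta,\theta^*)\ge\nu$ we have $\wh\theta\in U_\ep(\nu)^c$. The defining minimality of $\wh\theta$ gives $\calh^r_\ep(\wh\theta)\le\calh^r_\ep(\theta^*)=\wt m^\ep_r(T)^{1/r}$. By Minkowski's inequality, taking the supremum over $t$,
\[
g^\ep_r(\nu)^{1/r}\le \sup_t\|z_t(\wh\theta)-z_t(\theta^*)\|_r\le \calh^r_\ep(\wh\theta)+\calh^r_\ep(\theta^*)\le 2\wt m^\ep_r(T)^{1/r}.
\]
Next, $\wt m^{1/r}\le m^{1/r}+\Delta^\ep_r$. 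Combining, $m^\ep_r(T)^{1/r}\ge \tfrac12\big(g^\ep_r(\nu)^{1/r}-2\Delta^\ep_r\big)$.

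\emph{Step 2: pass to the stochastic remainder.} I intersect with the event $\{T(\ep,A)=T\}$; its complement produces the term $P[T(\ep,A)<T]$. On $\{T(\ep,A)=T\}$, Lemma \ref{lem:ini-bound} applied at $t=T(\ep,A)=T$ gives
\[
m^\ep_r(T)^{1/r}\le 2^{(r-1)/r}R^\ep_r(T(\ep,A))^{1/r}\exp\big(\consto \sfk_\ep^{r-1}{\sf o}_\ep\big),
\]
since $\tfrac1r\,2^{r-1}\consta^rT^r=\consto$. Hence on the bad event
\[
R^\ep_r(T(\ep,A))^{1/r}\ge x:=2^{-1-(r-1)/r}\big(g^{1/r}-2\Delta\big)e^{-\consto\sfk_\ep^{r-1}{\sf o}_\ep}.
\]
Normalizing by $\sqrt{2T}A\ep$, I define $\xi'=x/(\sqrt{2T}A\ep)$. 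The constants $\constn=2^{-7/2+1/r}T^{-1/2}$ in $\Xi_r^\ep$ are designed so that $\xi'$ is comparable to $\Xi^\ep_r(\nu,A)$, possibly up to a factor $2$ that I would track carefully; in that case $\Xi^\ep_r\le c\,\xi'$, and this is where the factor $2$ in front of $\constc$ should come from.

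\emph{Step 3: conditional tail bound.} I split on the $\calf_0$-measurable event $\{\Xi^\ep_r\ge\xi\}$; its complement gives $P[\Xi^\ep_r<\xi]$. On $\{\Xi^\ep_r\ge\xi\}$ the threshold satisfies $x\ge$ (a constant times) $\xi\sqrt{2T}A\ep$. Because $x$ is $\calf_0$-measurable, I condition on $\calf_0$ and apply Lemma \ref{lem:stoch-rem-bound} with this $x$. This gives a bound of the form $\constc\sfd_\ep\,\xi''^{-1}e^{-\xi''^2}$, where $\xi''$ is the normalized threshold. Using the monotonicity of $y\mapsto y^{-1}e^{-y^2}$, the bound can be replaced by its value at the lower bound for $\xi''$. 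The remaining issue is the prefactor $1/\xi''$, which blows up for small $\xi$. I would handle this by noting that the probability is trivially at most $1$: when $\xi''\ge 1/2$ the prefactor is at most $2$, which yields $2\constc\sfd_\ep e^{-\xi^2}$; when $\xi$ is small, the right-hand side $2\constc\sfd_\ep e^{-\xi^2}$ already exceeds $1$ (since $2\constc\approx 2.26>1$ and $\sfd_\ep\ge1$), so the bound holds trivially. Finally I take expectations and sum the three pieces.

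\emph{Main obstacle.} The hard part is the constant bookkeeping that makes the normalized threshold match $\Xi^\ep_r$ exactly. This means reconciling the factor $2^{-7/2+1/r}$, the $\tfrac12$ from the triangle inequality, the $2^{(r-1)/r}$ from Lemma \ref{lem:ini-bound}, and the $\sqrt2$ from the Gaussian tail, together with the small-$\xi$ case split that absorbs the $1/\xi$ prefactor into the factor $2$. If the constants do not align exactly, I expect only a harmless rescaling of $\xi$ to be needed, which the slack in $\constn$ appears designed to absorb.
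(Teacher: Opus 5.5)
Your proposal is correct and follows the paper's proof essentially step by step. The steps are: the minimality/Minkowski reduction to $m^\ep_r(T)^{1/r}\ge\frac12\big(g^\ep_r(\nu)^{1/r}-2\Delta^\ep_r\big)$ on $\{T(\ep,A)=T\}$; then Lemma \ref{lem:ini-bound}; then the conditional tail bound of Lemma \ref{lem:stoch-rem-bound}; and finally absorbing the $1/y$ prefactor via $1\wedge(y^{-1}e^{-y^2})\le 2e^{-y^2}$ and splitting on the $\calf_0$-measurable event $\{\Xi^\ep_r(\nu,A)\ge\xi\}$.

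The constant bookkeeping you were unsure about closes exactly. Your threshold $2^{-2+1/r}\big(g^\ep_r(\nu)^{1/r}-2\Delta^\ep_r\big)e^{-\consto\sfk_\ep^{r-1}{\sf o}_\ep}$ divided by $\sqrt{2T}A\ep$ equals $\Xi^\ep_r(\nu,A)$ on the nose, since $2\constn=2^{-5/2+1/r}T^{-1/2}$. So no rescaling of $\xi$ is needed, and the factor $2$ in front of $\constc$ comes only from the prefactor absorption in your Step 3, not from any mismatch.
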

The proof is given in \autoref{sec:proof-general-err}.

Estimating $g^\ep_r(\nu)$ from below for $r\in[2,\infty]$ is crucial in applications of Theorem \ref{thm:err-bound}. We proceed as follows
\bea\label{202508071017}
g_\infty^\ep(\nu)^2
&\geq& 
g^\ep_r(\nu)^{2/r} \ygeq g^\ep_2(\nu)
\nn\\&=&
\inf_{\theta_\ep\in U_\ep(\nu)^c}
\sup_{t\in[0,T]}(\sfd_\ep)^{-1}\sum_{i\in[\sfd_\ep]}\big|z^{\ep,i}_t(\theta_\ep)-z^{\ep,i}_t(\theta_\ep^*)\big|^2
\nn\\&\geq&
\inf_{\theta_\ep\in U_\ep(\nu)^c}
T^{-1}\int_0^T(\sfd_\ep)^{-1}\sum_{i\in[\sfd_\ep]}\big|{ z}^{\ep,i}_t(\theta_\ep)-{ z}^{\ep,i}_t(\theta_\ep^*)\big|^2dt,
\eea
However, obtaining a meaningful lower bound from the final expression requires additional information on the structure of $z^{\epsilon,i}_t$. We address this issue in the following section.

\section{Identifiability and consistency under repeated measurements }\label{sec:ident}

The main challenge in applying \autoref{thm:err-bound} in its general form is to control the error components due to the presence of the nuisance function and the possible lack of identifiability. 
In this section we focus on these issues. 
First we highlight the need for repeated measurements: as we show in \autoref{sec:repeated}, even simpler models may fail to be identifiable. In order to circumvent this problem we introduce a repeated measurements scheme. 
Then, in \autoref{sec:estimation-inter}, we focus on a specific model form, which allows us to obtain a non-asymptotic error bound explicitly depending on $\epsilon$, the graph size, and the model constants.
Finally, in 
\autoref{sec:cons-balance}
we discuss consistency and balance conditions.

\subsection{Repeated measurements}\label{sec:repeated}
We consider the system of stochastic differential equations having the same form as (\ref{202508070135}): 
\bea\label{202508170138}
dX^{\ep,i}_t &=& S^{\ep,i}(t,\bbX^{\ep,i}_t,\theta_\ep)dt+\sigma^{\ep,i}(t)dw^i_t, 
\quad
X^{\ep,i}_0\yeq x^{\ep,i}_0
\quad(i\in[\sfp_\ep]). 
\eea
The index $i\in[\sfp_\ep]$, not $i\in[\sfd_\ep]$, and $\bbX^{\ep,i}_t=(X^k_t)_{k\in\caln_i^\ep}$ with 
$\caln_i^\ep\in[\sfp_\ep]^{\sfk^\ep_i}$. 
Instead, we assume that $n$ repeated samples of $(X^{\ep,i})_{i\in[\sfp_\ep]}$ are available on $(\Omega,\calf,\bbF,P)$. 
Then the whole observations are described by the model (\ref{202508070135}) with $\sfd_\ep=\sfp_\ep n$, $i\in[\sfd_\ep]$, 
$
S^{\ep,i}\yeq S^{\ep,r(i)}
$
for $r(i)=\sfp_\ep+i-\sfp_\ep\lceil i/\sfp_\ep\rceil $, and 
$\bbX^{\ep,i}_t=(X^k_t)_{k\in\caln_i^\ep}$ for $\caln_i^\ep=\caln_{r(i)}^\ep+\sfp_\ep\lfloor(i-1)/\sfp_\ep\rfloor$, where 
$x+c=(x_k+c)$ for a vector $x=(x_k)$ and a number $c$. 
The $n$ groups of data can be dependent.

Given functions $\wt{S}^{\ep,i}$ ($i\in[\sfp_\ep]$), we set $\wt{S}^{\ep,i}=\wt{S}^{\ep,r(i)}$ for $i\in[\sfd_\ep]$. 
We consider the estimators $\wh{\theta}_\ep^r$ ($r\in[1,\infty]$) based on the observation of the $\sfd_\ep$-dimensional process $(X^{\ep,i})_{i\in[\sfd_\ep]}$ as in Section \ref{sec:general-err}. 
Then Theorem \ref{thm:err-bound} is valid, that is, the inequality (\ref{202508140415}) holds 
under Conditions \ref{as:lip} and \ref{as:diff}. 


Set $\cali^\ep_m=[m\sfp_\ep]\setminus[(m-1)\sfp_\ep]$ for $m\in[n]$. 
For example, the $n$ $\sfp_\ep$-dimensional random processes $(X^{\ep,i})_{i\in\cali^\ep_m}$ ($m\in[n]$) 
form an i.i.d. sequence 
if the variables $\big(x^{\ep,i}_0,\sigma^{\ep,i},w^{\ep,i}\big)_{i\in\cali^\ep_m}$ ($m\in[n]$) 
are $n$ independent copies of $\big(x^{\ep,i},\sigma^{\ep,i},w^{\ep,i}\big)_{i\in[\sfp_\ep]}$. 
Conditions \ref{as:lip} and \ref{as:diff} are requesting their inequalities for $i\in[\sfd_\ep]$, which is equivalent to requiring them for $i\in[\sfp_\ep]$, in this situation. 
In Section \ref{sec:estimation-inter}, we will treat a more general case where the Wiener processes $\big(w^{\ep,i}\big)_{i\in[\sfd_\ep]}$ are independent, but 
the $n$ variables $\big(x^{\ep,i}_0\big)_{i\in\cali^\ep_m}$ ($m\in[n]$) are weakly dependent 
and $\big(\sigma^{\ep,i}\big)_{i\in[\sfd_\ep]}$ are only controlled from above. 
The weak dependency assumption is useful for instance to treat $n$ blocks of data repeatedly sampled from a single $\sfp_\ep$-dimensional time series at certain intervals. 

The following example highlights the need for repeated measurements. 

 \begin{example}
Let us consider the model (\ref{202508170138}) with $S^{\ep,i}(t,\bbX^{\ep,i}_t,\theta_\ep)=\theta_{\ep,i}X^{\ep,i}_t$ and $\sigma^{\ep,i}(t)=1$. 
The random variables $x^{\ep,i}_0$ ($i\in[\sfp_\ep]$) are i.i.d. $\sim \text{Bernoulli}(1,1/2)$. 
Suppose that $\Theta_\ep=[0,1]^{\sfp_\ep}$ and 
$\rho_\ep(\theta_\ep,\theta_\ep')=\big\{(\sfp_\ep)^{-1}\sum_{k\in[\sfp_\ep]}|\theta_{\ep,k}-\theta'_{\ep,k}|^p\big\}^{1/p}$ for $\theta_\ep=(\theta_{\ep,k})_{k\in[\sfp_\ep]}$ and $\theta'_\ep=(\theta'_{\ep,k})_{k\in[\sfp_\ep]}$, where $p$ is an arbitrary number in $[1,\infty)$. 
Let $\theta_{\ep,i}^*=1/2$. 
For $r\in[1,\infty)$, the contrast function $\calh^r_\ep$ is 
\beas 
\calh^r_\ep(\theta_\ep) 
&=& 
\sup_{t\in[0,T]}\bigg\{(\sfp_\ep)^{-1}\sum_{i\in[\sfp_\ep]:x^{\ep,i}_0=1}\big|X^{\ep,i}_t-e^{\theta_{\ep,i}t}\big|^r
+(\sfp_\ep)^{-1}\sum_{i\in[\sfp_\ep]:x^{\ep,i}_0=0}\big|X^{\ep,i}_t\big|^r
\bigg\}^{1/r}
\eeas
since 
$z^{\ep,i}_t(\theta_\ep)=x^{\ep,i}_0e^{\theta_\ep^it}=0$ when $x^{\ep,i}_0=0$. 
There is a minimizer $\wh{\theta}_\ep^r=(\wh{\theta}_{\ep,i})_{i\in[\sfp_\ep]}$ of $\calh^r_\ep$ such that 
$\wh{\theta}_{\ep,i}^r=0$ for all $i\in[\sfp_\ep]$ whenever $x^{\ep,i}_0=0$. 
Then
\beas 
\inf_{\ep\in(0,1]}P\bigg[(\sfp_\ep)^{-1}\sum_{i\in[\sfp_\ep]}\big|\wh{\theta}_{\ep,i}^r-\theta_{\ep,i}^*\big|^p\geq2^{-p-1}\bigg]
\ygeq
\inf_{\ep\in(0,1]}P\bigg[(\sfp_\ep)^{-1}\sum_{i\in[\sfp_\ep]}1_{\{x^{\ep,i}_0=0\}}\geq2^{-1}\bigg]
\geq\half
\eeas
regardless of the behavior of $\sfp_\ep$. 
The estimator $\wh{\theta}_\ep^r$ is inconsistent in $\rho_\ep$-metric for any $p\in[1,\infty)$, 
hence 
inconsistent in the metric $\rho_\ep(\theta_\ep,\theta_\ep')=\max_{i\in[\sfp_\ep]}\big|\theta_{\ep,i}-\theta'_{\ep,i}\big|$. 
 \end{example}
 
This example 
suggests that some restrictive identifiability condition (like the location of the initial values) may be necessary for consistency of estimators in experiments without repeated measurements, which we do not pursue here. 

\subsection{Estimation of the graph interactions}\label{sec:estimation-inter}
\begin{comment}
{\bf \colorr Under construction. This attempt should be valid for $\theta_\ep$ that has a lower dimension of $\sfd_\ep$ (the dimension of the diffusion process). At present, the setting that assumes only one $i$-th observation $x^{\ep,i}_0$ for the parameters $\theta^{ij}$ is a similar setting like the Neyman-Scott problem. We cannot get a separation of the parametric model. from the data $x^{\ep,i}_0$.

Do it for repeated measurements with stationary mixing initial values. }
\end{comment}
\begin{comment}
{\gray
We will treat a nearly linear model. 
Consider the system of stochastic differential equations
\bea\label{202508070752}
dX^{\ep,i}_t &=& S^{\ep,i}\big((1_{\{j\in\caln^\ep_i\}}X^j_t)_{j\in[\sfd_\ep]},(\theta^{ij})_{j\in\caln^\ep_i}\big)dt+\ep dw^i_t, 
\quad
X^{\ep,i}_0\yeq x^{\ep,i}_0
\quad(i\in[\sfd_\ep])
\eea
The parameter is $\theta_\ep=(\theta^{ij})_{j\in\caln^\ep_i,i\in[\sfd_\ep]}$. 
}
\end{comment}

%
To demonstrate an application of Theorem \ref{thm:err-bound}, from now on
we focus on a linearly parametrized principal interaction model in a repeated measurements setting. 
This allows us in particular to give an explicit characterization of the error component due to nuisance and identifiability.
Specifically, 
we consider $n$ observations of the data generated by the model (\ref{202508170138}) with the drift term (\ref{202508071013}): 
\bea \label{eq:interactions-model}
S^{\ep,i}(t,\bbX^{\ep,i}_t,\theta_\ep)
&=& 
\sum_{j\in\checkN_i^\ep}
\theta^{ij}_\ep \psi^{ij}\big(X^{\ep,i}_t,X^{\ep,j}_t\big)
+\ep^\alpha\sum_{j\in\checkN_i^\ep}V^{ij}_\ep(t,X^{\ep,i}_t,X^{\ep, j}_t\big),
\eea 
where $\theta_\ep=(\theta^{ij}_\ep)_{j\in N_i^\ep,i\in[\sfp_\ep]}$. 
$N^\ep_i$ is the neighborhood of $i$, possibly having $i$ as an element. 

Here $\psi^{ij}$ are known but $V^{ij}_\ep$ are assumed to be unknown. 
We do not need to know the value of the positive parameter $\alpha$, however, the convergence of the estimator is affected by its value. 
As mentioned above, the total number of components is $\sfd_\ep=n\sfp_\ep$ and the components are arranged in the cyclic order: 
$\psi^{ij}\yeq \psi^{r(i),j}$ 
for $r(i)=\sfp_\ep+i-\sfp_\ep\lceil i/\sfp_\ep\rceil $, 
$V^{ij}_\ep=V^{r(i)j}_\ep$ 
and 
$\bbX^{\ep,i}_t=(X^k_t)_{k\in\caln_i^\ep}$ for 
\bea\label{202508210319}
\caln_i^\ep&=&\caln_{r(i)}^\ep+u_\ep(i)\quad\text{with}\quad u_\ep(i)=\sfp_\ep\lfloor(i-1)/\sfp_\ep\rfloor. 
\eea

In words, we regard the \(n\) repeated observations as one enlarged
system of dimension \(\sfd_\ep=n\sfp_\ep\). Each block contains one copy
of the original \(\sfp_\ep\)-dimensional network, and the maps \(r(i)\)
and \(u_\ep(i)\) simply identify, respectively, the node label within the
block and the block offset.

The functions $\wt{S}^{\ep,i}(t,\bbz^{\ep,i},\theta)$ are defined by (\ref{202508160503}) for $i\in[\sfp_\ep]$ and 
extended as $\wt{S}^{\ep,i}=\wt{S}^{\ep,r(i)}$ for $i\in[\sfd_\ep]$. 
The $\calf_0$-measurable initial values $x^{\ep,i}_0$ are sampled $\sfd_\ep$ times. 
The variables $\{x^{\ep,i}_0\}_{i\in[\sfd_\ep]}$ are dependent in general. 
The family $\{w^i\}_{i\in[\sfd_\ep]}$ is a collection of independent standard Wiener processes 
for $w^i=(w^i_t)_{t\in[0,T]}$. 
%
In the repeated measurements setting, the definition (\ref{202508070403}) of ${\sf o}_\ep$ and
$\sfk_\epsilon$ are rewritten as
\[
{\sf o}_\ep = \max_{i\in[\sfp_\ep]}\sum_{j\in[\sfp_\ep]}1_{\caln^\ep_{j}}(i)
\qquad 
\sfk_\epsilon = \max_{i\in[\sfp_\ep]}\sfk^\ep_i.
\]
In the following, the metric $\rho_\ep$ on the parameter space $\Theta_\epsilon$ is set as 
\bea\label{202508171526}
\rho_\ep(\theta_\ep,\theta_\ep')
&=& 
\bigg\{(\sfp_\ep)^{-1}\sum_{i\in[\sfp_\ep]}
\sum_{j\in\checkN_i^\ep}
(\theta^{ij}-\theta^{ij'})^2\bigg\}^{1/2}. 
\eea
\noindent

\begin{comment}
{\gray
\xout{
Remark that $\theta^{ij}$ and $\theta^{ji}$ are indistinguishable if $\psi^{ij}(x,y)=\psi^{ji}(y,x)$. 
Identifiability is a delicate matter. }
}
\end{comment}
%
\begin{comment}
{\gray
$x^{\ep,i}_t(\theta_\ep)$ is demonized but it keeps the $\ep^\alpha$-order term. 
However, the effect of the $\ep^\alpha$-order term to the non-degeneracy is vanishing as $\ep\down0$. 
}
\end{comment}
%
%
%
We consider the following refinement of \ref{as:lip} adapted to the present model \eqref{eq:interactions-model}.

\begin{enumerate}[label=(A\arabic*$'$)]
\item \label{as:drift-lin} 
\begin{enumerate}[label=(\roman*)]
\item 
There exists a constant $\consti$ such that 
\beas
|\psi^{ij}(x)|\vee|V^{ij}_\ep(t,x,\tau_\ep)|
&\leq& 
\consti (1+ |x|_\infty)
\eeas
and
\beas
|\psi^{ij}(x)-\psi^{ij}(y)|
\vee|V^{ij}_{\ep}(t,x,\tau_\ep)-V^{ij}_{\ep}(t,y,\tau_\ep)|
&\leq& 
\consti |x-y|_\infty
\eeas
for all $t\in[0,T],\>x,y\in\bbR^2,\>j\in N^\ep_i,\>i\in[\sfp_\ep],\>\ep\in(0,1]$, and 
$|x|_{\infty} =  \max_{i = 1,2} |x_i|, x \in \bbR^2$.
\item  $\ds\constt:=\esssup_{\omega\in\Omega}\sup_{\ep\in(0,1]}\sup_{i\in[\sfd_\ep]}|x^{\ep,i}_0|<\infty$. 
\item $\ds\constj:=\sup_{\ep\in(0,1]}\sup_{i\in[\sfp_\ep]}\sup_{j\in N^\ep_i}\sup_{\theta_\ep=(\theta_\ep^{ij})\in\Theta_\ep}
|\theta^{ij}_{\ep}|<\infty$. 
\end{enumerate}
\end{enumerate}
Furthermore, 
we adopt the following condition to ensure the identifiability of the model. 
\begin{enumerate}[label=(L)]
\item \label{as:ident}
There exists a sub $\sigma$-field $\calb^\ep_i$ of $\calf$ for each $\ep\in(0,1]$ and $i\in[\sfd_\ep]$ such that 
\beas 
\inf_{i\in[\sfd_\ep]}\lambda_{\min}
\bigg(E\bigg[\text{Cov}_{\calb^\ep_i}\big[\psi^{ij}\big(x^{\ep,i}_0,x^{\ep,j}_0\big),
\psi^{ij'}\big(x^{\ep,i}_0,x^{\ep,j'}_0\big)\big]\bigg]\bigg)_{j,j'\in N^\ep_i}
&\geq&
\constk\ep^\beta
\quad(\ep\in(0,1])
\eeas
for some constants $\beta\geq0$ and $\constk>0$, 
where $\text{Cov}_{\calb^\ep_i}$ denotes the conditional covariance given $\calb^\ep_i$, and $\lambda_{\min}$ denotes the minimum eigenvalue of the matrix. 
\end{enumerate}

We derive a concentration inequality for $T(\ep,A_\ep)$. We do so by applying arguments similar to the estimates leading to (\ref{202508070505}).
\begin{lemma}\label{lem:diff-bound}
Under \ref{as:drift-lin} and \ref{as:diff}
\beas 
P\big[T(\ep,A_\ep)<T\big]
&\leq&
\sfd_\ep\times
2\pi^{-1/2}\frac{\sqrt{2T A_\ep^2\ep^2}}{x}\exp\bigg(-\frac{x^2}{2TA_\ep^2\ep^2}\bigg)
\eeas
whenever 
\bea \label{eq:a-cond}
A_\ep>2\constz\bigg[1
+(\max_{i\in[\sfd_\ep]}\sfk^\ep_i)\sup_{i\in[\sfd_\ep]}\sup_{s\in[0,T]}\big|x^{\ep,i}_s\big|\bigg], 
\eea
where
\beas 
x &=& 
2^{-1}\constz^{-1}A_\ep(\max_{i\in[\sfd_\ep]}\sfk^\ep_i)^{-1}\exp\bigg(-\consta T\max_{i\in[\sfd_\ep]}\sfk^\ep_i\bigg) .
\eeas
\end{lemma}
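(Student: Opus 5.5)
The plan is to reduce the event $\{T(\ep,A_\ep)<T\}$ to a large deviation of the stochastic integrals $M^i_t=\int_0^t\sigma^{\ep,i}(v)dw^i_v$, and then invoke the tail bound of Lemma \ref{lem:stoch-rem-bound}. Since $T(\ep,A_\ep)$ is defined through the diffusion coefficients themselves, I will work with a localized version: on $[0,\tau]$ with $\tau=T(\ep,A_\ep)$ we have $|\sigma^{\ep,i}|\le A_\ep\ep$. So Lemma \ref{lem:stoch-rem-bound} gives exactly
\[
P\big[R^\ep_\infty(\tau)>x\mid\calf_0\big]\le 2\pi^{-1/2}\sfd_\ep\frac{\sqrt{2TA_\ep^2\ep^2}}{x}\exp\Big(-\frac{x^2}{2TA_\ep^2\ep^2}\Big),
\]
and integrating over $\calf_0$ yields the right-hand side of the claim. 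It therefore suffices to show the inclusion $\{\tau<T\}\subset\{R^\ep_\infty(\tau)>x\}$ for the specified $x$. Equivalently, I will show that $R^\ep_\infty(\tau)\le x$ forces $\max_i|\sigma^{\ep,i}(t)|\le A_\ep\ep/2$ (strictly below $A_\ep\ep$) up to and slightly beyond $\tau$, contradicting the definition of $\tau$ when $\tau<T$.

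First I will run the Gronwall argument of Lemma \ref{lem:ini-bound} in the $r=\infty$ case, which gives on $[0,\tau]$
\[
\sup_{s\le t}\max_i|X^{\ep,i}_s-x^{\ep,i}_s|\le R^\ep_\infty(t)\exp\Big(\consta t\max_i\sfk^\ep_i\Big)
\]
(the Lipschitz constant from \ref{as:drift-lin} is absorbed into $\consta$ as in \ref{as:lip}). Then I will insert this into \ref{as:diff}:
\[
|\sigma^{\ep,i}(t)|\le \constz\ep\Big(1+\sfk_\ep\sup_{i,s}|x^{\ep,i}_s|+\sfk_\ep\, m^\ep_\infty(t)\Big).
\]
Condition \eqref{eq:a-cond} makes the first two terms at most $A_\ep\ep/2$. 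The choice of $x$ is made precisely so that $R^\ep_\infty\le x$ gives $\constz\ep\,\sfk_\ep\, x\, e^{\consta T\sfk_\ep}\le A_\ep\ep/2$. Taken together, these give $|\sigma^{\ep,i}(t)|<A_\ep\ep$ on $[0,\tau]$ in the strict sense, with a margin. This is the step where the constants in $x$ must match exactly, and I will verify that the factor $2^{-1}\constz^{-1}A_\ep\sfk_\ep^{-1}e^{-\consta T\sfk_\ep}$ is the one produced here.

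The main obstacle is the boundary behaviour at $\tau$. The infimum defining $T(\ep,A_\ep)$ means that when $\tau<T$ there are times $t_n\downarrow\tau$ with $\max_i|\sigma^{\ep,i}(t_n)|>A_\ep\ep$. However, the bound above only controls $\sigma$ on $[0,\tau]$, and $\sigma$ is merely progressively measurable, not continuous. To handle this I will apply the a priori bound of \ref{as:diff} pointwise in $t$ using continuity of $X$ and $x^{\ep,i}$. The stochastic integral and the paths are continuous, so if $m^\ep_\infty(\tau)\le xe^{\consta T\sfk_\ep}$ holds with the strict margin, then by continuity $m^\ep_\infty(t)<2xe^{\consta T\sfk_\ep}$ on a right neighbourhood $[\tau,\tau+\delta)$. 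It follows that $|\sigma^{\ep,i}(t)|\le A_\ep\ep$ there, contradicting the existence of the $t_n$. If needed, I will make this rigorous by replacing $x$ with a slightly smaller level $x'<x$ and letting $x'\uparrow x$ in the continuous tail bound, or alternatively by defining an auxiliary stopping time at level $A_\ep\ep$ for the continuous process $m^\ep_\infty$ instead of for $\sigma$. Finally, I will take expectations of the conditional bound to obtain the unconditional statement.
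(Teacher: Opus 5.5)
Your proposal is correct and follows the paper's proof essentially step by step: the Gronwall bound of Lemma~\ref{lem:ini-bound} in the $r=\infty$ case, insertion into \ref{as:diff}, absorption of the deterministic part via \eqref{eq:a-cond}, the choice of $x$ making $\constz\ep\sfk_\ep x e^{\consta T\sfk_\ep}=A_\ep\ep/2$, and Lemma~\ref{lem:stoch-rem-bound} at $T(\ep,A_\ep)$. Your right-neighbourhood continuity argument at $\tau$ fills a point the paper skips when it bounds $P[T(\ep,A_\ep)<T]$ by $P[\max_i\sup_{t\le T(\ep,A_\ep)}|\sigma^{\ep,i}(t)|\ge A_\ep\ep]$, a bound that tacitly assumes the infimum is attained.

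One arithmetic fix is needed in that argument. On $[\tau,\tau+\delta)$ you need $m^\ep_\infty(t)<xe^{\consta T\sfk_\ep}+\eta$, with $\eta>0$ supplied by the strict inequality in \eqref{eq:a-cond}. The bound $m^\ep_\infty(t)<2xe^{\consta T\sfk_\ep}$ is not enough, since it only yields $|\sigma^{\ep,i}(t)|<A_\ep\ep+\constz\ep(1+\sfk_\ep\sup_{i,s}|x^{\ep,i}_s|)$.
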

\noindent
The proof is given in \autoref{sec:proof-diff-bound}.

The main technical challenge for this section is to obtain a control on 
the \textit{nuisance discrepancy} terms 
\eqref{eq:delta-r-def}-\eqref{eq:delta-def} and 
\textit{identifiability gap}
\eqref{eq:gnu-def}-\eqref{eq:gnu-inf-def}.

As for $\Delta^\ep_r$ and 
$\Delta^\ep_\infty$, defined by \eqref{eq:delta-r-def}-\eqref{eq:delta-def}, we have the following bound. 
\begin{lemma}\label{lem:delta}
    It holds that, under \ref{as:drift-lin}, 
    \bea\label{eq:delta-inf-bound}
    \Delta^\ep_r \leq \Delta^\ep_\infty
     &\leq&
    \consti\ep^\alpha\sfk_\ep T
    \bigg\{1+ \constj
    +
    T\consti \sfk_\ep\bigg\}
    \exp\big(T(\consti+\consti\constj) \sfk_\ep\big). 
    \eea
\end{lemma}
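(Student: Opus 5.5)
The first inequality $\Delta^\ep_r\le\Delta^\ep_\infty$ is immediate: for each $s$, the normalized $\ell^r$-average $\{(\sfd_\ep)^{-1}\sum_i|x^{\ep,i}_s-z^{\ep,i}_s|^r\}^{1/r}$ is bounded by $\max_i|x^{\ep,i}_s-z^{\ep,i}_s|$. Taking the supremum over $s\in[0,t]$ and then over $t\in[0,T]$ gives the claim. The substance is therefore a Gronwall estimate for the sup-norm difference $D(t)=\sup_{s\le t}\max_{i\in[\sfd_\ep]}|x^{\ep,i}_s-z^{\ep,i}_s|$ between the true ODE (with the nuisance term) and the measurement ODE (without it), both at $\theta_\ep^*$ and from the same initial value $x^{\ep,i}_0$.

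\emph{Step 1: a priori bound on $x^{\ep,i}$.} Set $M(t)=\sup_{s\le t}\max_i|x^{\ep,i}_s|$. By \ref{as:drift-lin}(i),(iii), each drift is bounded by
\[
\sum_{j\in N^\ep_i}\bigl(\constj+\ep^\alpha\bigr)\consti\bigl(1+M(s)\bigr)\le \sfk_\ep(\constj+1)\consti\bigl(1+M(s)\bigr),
\]
using $\ep\le1$ and $|N^\ep_i|\le\sfk_\ep$. Since the repeated-measurements blocks are copies, the maxima over $[\sfd_\ep]$ and $[\sfp_\ep]$ coincide. Combined with \ref{as:drift-lin}(ii), Gronwall gives
\[
1+M(T)\le(1+\constt)\exp\bigl(T\consti(1+\constj)\sfk_\ep\bigr).
\]
This already produces the exponential factor in \eqref{eq:delta-inf-bound}.

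\emph{Step 2: difference equation.} Write
\[
x^{\ep,i}_t-z^{\ep,i}_t=\int_0^t\sum_{j}\theta^{*ij}\bigl[\psi^{ij}(x^i_s,x^j_s)-\psi^{ij}(z^i_s,z^j_s)\bigr]ds+\ep^\alpha\int_0^t\sum_jV^{ij}_\ep(s,x^i_s,x^j_s)\,ds.
\]
By the Lipschitz bound, the first term is at most $\constj\consti\sfk_\ep\int_0^tD(s)\,ds$. By Step 1, the nuisance term is at most $\ep^\alpha\consti\sfk_\ep T(1+M(T))$. Gronwall then yields
\[
D(T)\le \ep^\alpha\consti\sfk_\ep T(1+M(T))\exp\bigl(T\consti\constj\sfk_\ep\bigr).
\]

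\emph{Step 3: matching the stated constant.} The product of the two exponentials from Steps 1 and 2 gives $\exp(T(\consti+2\consti\constj)\sfk_\ep)$. The target has $\exp(T(\consti+\consti\constj)\sfk_\ep)$ together with the polynomial prefactor $\{1+\constj+T\consti\sfk_\ep\}$, so this extra exponential is the main obstacle. To remove it, I would avoid bounding $M$ separately and bound the nuisance term via $|x^j_s|\le|z^j_s|+D(s)$. Then
\[
|V^{ij}_\ep(s,x^i_s,x^j_s)|\le\consti\bigl(1+\sup|z|\bigr)+\consti D(s),
\]
so the $\consti D$ part enters the Gronwall rate as $\consti\sfk_\ep$ and the rate becomes $\consti(\constj+1)\sfk_\ep$, as in the target. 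The remaining inhomogeneity requires $\sup|z|$. The measurement model has drift bounded linearly by $\constj\consti\sfk_\ep(1+|z|)$, and I would control it by the integrated (not exponentiated) form $|z_t|\le\constt+\constj\consti\sfk_\ep\int_0^t(1+|z|)$. That integrated form is what should produce the additive $T\consti\sfk_\ep$ and $\constj$ terms in the braces. The cleanest route, which I would try first, is a single Gronwall applied to the combined quantity $D(s)+\sup_{u\le s}\max_i|z^{\ep,i}_u|$ (or to $D$ with $1+|x|$ handled inside). I expect the bookkeeping of constants here, including how $\constt$ is absorbed or made implicit, to be the only delicate point. If exact matching fails, the argument still gives a bound of the same form up to constants in the exponent.
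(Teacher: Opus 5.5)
\textbf{There is a genuine gap: your argument does not reach the stated exponent.} Your Steps 1--2 are exactly the paper's argument. The a priori Gronwall bound \eqref{202508110733} on $\sup_s\max_i|x^{\ep,i}_s|$ is inserted as a constant forcing into a second Gronwall for $D$. As you observe, this only yields $\exp\bigl(T\consti(1+2\constj)\sfk_\ep\bigr)$, and Step 3 does not close the gap. The ``integrated form'' for $z$ is not a bound until you apply Gronwall to it, and inserting the resulting $\sup_{s\le T}|z_s|$ as a constant brings back the factor $e^{T\consti\constj\sfk_\ep}$. A single Gronwall for $D+\sup|z|$ loses the factor $\ep^\alpha$, because the $z$-equation has forcing of order one. (The paper reaches the stated exponent only because \eqref{202508110733} uses the linear-growth constant $\consti$ for the full drift. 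That drops the factor $\constj+\ep^\alpha$ which your Step 1 correctly keeps.)

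\textbf{The missing idea} is to keep the time dependence of the forcing instead of freezing $1+M(s)$ at $1+M(T)$. Your Step 2 gives, for $t\in[0,T]$,
\[
D(t)\le\int_0^t\bigl(aD(s)+b(s)\bigr)\,ds,\qquad a=\consti\constj\sfk_\ep,\qquad b(s)=\ep^\alpha\consti\sfk_\ep\bigl(1+M(s)\bigr)\le\ep^\alpha\consti\sfk_\ep(1+\constt)e^{\mu s},
\]
where $\mu=\consti(1+\constj)\sfk_\ep$ comes from Step 1. The integral form of Gronwall's lemma gives $D(T)\le\int_0^Te^{a(T-s)}b(s)\,ds$. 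Since $a\le\mu$,
\[
D(T)\le\ep^\alpha\consti\sfk_\ep(1+\constt)\int_0^Te^{\mu(T-s)}e^{\mu s}\,ds=\ep^\alpha\consti\sfk_\ep T(1+\constt)\exp\bigl(T(\consti+\consti\constj)\sfk_\ep\bigr).
\]
This has exactly the stated exponent, with a single exponential; the term $T\consti\sfk_\ep$ in the braces is not even needed.

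\textbf{On the braces: do not try to absorb $\constt$ into $\constj$.} The paper's own chain \eqref{202508162212} has $1+\max_i|x^{\ep,i}_0|+T\consti\sfk_\ep$ in the braces, which is bounded by $1+\constt+T\consti\sfk_\ep$. So the $\constj$ in the statement appears to be a slip for $\constt$, and as written the bound is false in general. For instance, take:
\begin{itemize}
\item $\theta^*_\ep=0$ in the interior of $\Theta_\ep=[-\delta,\delta]^{\pi_\ep}$;
\item $N^\ep_i=\{i\}$ and $V^{ii}_\ep(t,x)=\consti(1+|x_1|)$;
\item all initial values equal to $c>0$.
\end{itemize}
Then $z^{\ep,i}\equiv c$, while $x^{\ep,i}_T-c=(1+c)(e^{\ep^\alpha\consti T}-1)\ge\consti\ep^\alpha T(1+c)$. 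This is unbounded in $c$, whereas the stated right-hand side does not depend on $c$.
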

\noindent
The proof is given in \autoref{sec:proof-delta}.

In the linearly parametrized model, the nuisance discrepancy can be controlled by means of simpler, more explicit quantities. Namely, let
\bea\label{202508171722}
Q^\ep(\theta_\ep)
&=& 
3^{-1}T^{-1}(\sfd_\ep)^{-1}\sum_{i\in[\sfd_\ep]}
\big|\sum_{j\in\checkN_i^\ep}(\theta^{ij}_\ep-\theta^{ij*}_\ep) \psi^{ij}\big(x^{\ep,i}_0,x^{\ep,j}_0\big)\big|^2,
\eea
where 
\bea\label{202508202034}
\theta_\ep^{ij}&=&\theta_\ep^{r(i)r(j)}\quad(i,j\in[\sfd_\ep]). 
\eea
%
The following Lemma shows that 
the quantities $g_r^\ep(\nu), g_\infty^\ep(\nu)$ 
\eqref{eq:gnu-def}-\eqref{eq:gnu-inf-def}
can indeed be controlled in terms of $Q^\epsilon$
and a term depending on the graph connectivity. 

\begin{lemma}\label{lem:ident-gap-1} 
Suppose that Condition \ref{as:drift-lin} is satisfied. 
Let $r\in[2,\infty)$ and $\constd>0$. Then 
\bea\label{202508100930}
g_\infty^\ep(\nu)^2
\ygeq 
g_r^\ep(\nu)^{2/r}
\ygeq
g_2^\ep(\nu)
\ygeq
\inf_{\theta_\ep\in U_\ep(\nu)^c}G^\ep(\theta_\ep,\constd)-I^\ep(\constd)
\eea
for $\ep\in(0,T^{3/\constd}\wedge1]$ and $\nu>0$, 
where  
\bea\label{eq:i-def}
G^\ep(\theta_\ep,\constd)\yeq\ep^\constd Q^\ep(\theta_\ep)\quad\text{and}\quad
I^\ep(\constd)
\yeq
\constl
(\max_{i}\sfk^\ep_i)^3\ep^{\frac{4}{3}\constd}
\exp\bigg(\constm\ep^{\constd/3}(\max_{i\in[\sfd_\epsilon]}\sfk^\ep_i)\bigg)
\eea
with
$\constm=2\consti\constj$ and 
\beas 
\constl
&=& 
\frac{3}{2}{\consti^3}(1+\constt)^2{(\constj)^3}. 
\eeas
\end{lemma}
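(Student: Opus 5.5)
The first two inequalities in \eqref{202508100930} are those already recorded in \eqref{202508071017}: the $\ell^r$-average is dominated by the maximum, and Jensen/H\"older in the average over $i$ gives $g^\ep_r(\nu)^{2/r}\geq g^\ep_2(\nu)$ for $r\geq2$. So the real task is $g_2^\ep(\nu)\geq\inf_{U_\ep(\nu)^c}G^\ep-I^\ep$. Since $g_2^\ep$ carries a supremum over $t\in[0,T]$, I will not use the time-averaged lower bound in \eqref{202508071017}. Instead I will evaluate the supremum at a single small time $t_\ep=\ep^{\constd/3}$, which lies in $[0,T]$ exactly when $\ep\leq T^{3/\constd}$. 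This explains the restriction on $\ep$.

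Fix $\theta_\ep$ and write $D^i_t=z^{\ep,i}_t(\theta_\ep)-z^{\ep,i}_t(\theta^*_\ep)$. Both solutions start at $x^{\ep,i}_0$, so
\[
D^i_t=t\,a_i+\int_0^t\big(b_i(s)-a_i\big)ds,\qquad a_i=\sum_{j\in\checkN_i^\ep}(\theta^{ij}_\ep-\theta^{ij*}_\ep)\psi^{ij}(x^{\ep,i}_0,x^{\ep,j}_0),
\]
where $b_i(s)=\wt S^{\ep,i}(s,\bbz^{\ep,i}_s(\theta_\ep),\theta_\ep)-\wt S^{\ep,i}(s,\bbz^{\ep,i}_s(\theta^*_\ep),\theta^*_\ep)$. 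Using $|u+v|^2\geq\frac12|u|^2-|v|^2$, or a $\frac13$-weighted variant matching the factor $3^{-1}$ in $Q^\ep$, gives
\[
\sup_t(\sfd_\ep)^{-1}\sum_i|D^i_t|^2\geq c\,t_\ep^2(\sfd_\ep)^{-1}\sum_i|a_i|^2-(\sfd_\ep)^{-1}\sum_i\Big|\int_0^{t_\ep}(b_i-a_i)ds\Big|^2 .
\]
The first term is a multiple of $T\,t_\ep^2\,Q^\ep(\theta_\ep)$, which should produce $\ep^{\constd}Q^\ep$ after absorbing constants; the value of $t_\ep$ may need rescaling by a power of $T$ to match $G^\ep$ exactly.

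The main work is bounding the remainder uniformly in $\theta_\ep$. Write $b_i(s)-a_i$ as the sum of the increments of $\sum_j\theta^{ij}\psi^{ij}(z^{\ep,i}_s,z^{\ep,j}_s)-\sum_j\theta^{ij}\psi^{ij}(x^{\ep,i}_0,x^{\ep,j}_0)$, for $\theta=\theta_\ep$ and for $\theta=\theta^*_\ep$. By \ref{as:drift-lin}(i),(iii), each increment is at most $\constj\consti\sfk_\ep\max_k|z^{\ep,k}_s-x^{\ep,k}_0|$. The ODE \eqref{202508071058} together with the growth bound gives
\[
|z^{\ep,k}_s-x^{\ep,k}_0|\leq\int_0^s\consti\constj\sfk_\ep\big(1+\constt+|z_u-x_0|_\infty\big)du .
\]
Gronwall then yields $\sup_k|z^{\ep,k}_s-x^{\ep,k}_0|\leq\consti\constj\sfk_\ep(1+\constt)\,s\,e^{\consti\constj\sfk_\ep s}$. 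Hence $|\int_0^{t_\ep}(b_i-a_i)ds|\lesssim(\consti\constj)^2\sfk_\ep^2(1+\constt)\,t_\ep^2\,e^{2\consti\constj\sfk_\ep t_\ep}$, where the factor $2$ in $\constm$ comes from the two parameter values or from squaring. Squaring gives order $\sfk_\ep^4t_\ep^4$, and with $t_\ep^4=\ep^{4\constd/3}$ this has the form of $I^\ep(\constd)$.

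The main obstacle is matching the exact powers and constants, namely $\sfk_\ep^3$ and $\consti^3\constj^3(1+\constt)^2$ in $\constl$, rather than the cruder $\sfk^4$ above. To fix this I would first try keeping the cross term: expand $|ta_i+v_i|^2=t^2a_i^2+2ta_iv_i+v_i^2$ and bound $|a_i|\leq2\constj\consti\sfk_\ep(1+\constt)$ directly. The cross term is then of order $t\cdot\sfk(1+\constt)\cdot\sfk^2(1+\constt)t^2$, which gives the $(1+\constt)^2$ dependence, with the $\sfk$ and $t$ powers rebalanced accordingly. I would fine-tune the choice of $t_\ep$, and how the $\frac13$ is split, to land on the stated $\ep^{4\constd/3}$ form. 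Finally, take the infimum over $\theta_\ep\in U_\ep(\nu)^c$; the bound on the remainder is uniform in $\theta_\ep$, so this completes the argument.
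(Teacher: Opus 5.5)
The first chain $g_\infty^\ep(\nu)^2\ge g_r^\ep(\nu)^{2/r}\ge g_2^\ep(\nu)$ is handled as in the paper, and so are your pointwise ingredients. These are the decomposition $D^i_t=t a_i+v_i$, the Gronwall bound \eqref{202508110350}, and, in your refined version, dropping $v_i^2$ and bounding the cross term. That last step is equivalent to the paper's difference-of-squares step and gives $|D^i_t|^2\ge t^2a_i^2-\eta(\ep,t)$ uniformly in $\theta_\ep$, with $\eta(\ep,t)$ of order $\consti^3(1+\constt)^2(\constj)^3\sfk_\ep^3t^3e^{2\consti\constj t\sfk_\ep}$.

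The gap is the final step, which you leave to ``fine-tuning'' and which does not go through as described. At the single time $t_\ep=\ep^{\constd/3}$ the refined estimate gives $g_2^\ep(\nu)\ge 3T\ep^{-\constd/3}\inf_{U_\ep(\nu)^c}G^\ep-\eta(\ep,\ep^{\constd/3})$. Here $\eta(\ep,\ep^{\constd/3})$ is of order $\sfk_\ep^3\ep^{\constd}$, which is larger than $I^\ep(\constd)$ by a factor of order $\ep^{-\constd/3}$ (exactly $4\ep^{-\constd/3}I^\ep(\constd)$ with the paper's $\eta$). So both terms carry an extra $\ep^{-\constd/3}$. Absorbing constants, rescaling $t_\ep$ by a power of $T$, or redistributing the $\frac13$ cannot change exponents of $\ep$. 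Deducing the stated inequality from this bound would need a separate case analysis on the sign of $\inf G^\ep-I^\ep$. That analysis closes only when $T$ is large relative to the numerical constants, and it is not in your proposal. Your crude variant does have the power $\ep^{4\constd/3}$, but with $\sfk_\ep^4$ and a halved main term.

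The missing idea is the one you explicitly set aside: the time-averaged bound in \eqref{202508071017}. The paper integrates the pointwise estimate over $t\in[0,\ep^{\constd/3}]\subset[0,T]$; this inclusion is what $\ep\le T^{3/\constd}$ is for.

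\begin{itemize}
\item Since $T^{-1}\int_0^{\ep^{\constd/3}}t^2\,dt=\ep^{\constd}/(3T)$, the main term is exactly $\ep^{\constd}Q^\ep(\theta_\ep)=G^\ep(\theta_\ep,\constd)$. This, not a weighted form of $|u+v|^2\ge\frac12|u|^2-|v|^2$, is where the factor $3^{-1}T^{-1}$ in $Q^\ep$ comes from.
\item Since $\int_0^{\ep^{\constd/3}}6t^3\,dt=\frac32\ep^{4\constd/3}$, the remainder becomes $T^{-1}\constl\sfk_\ep^3\ep^{4\constd/3}\exp(\constm\ep^{\constd/3}\sfk_\ep)$. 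This accounts for the $\frac32$ in $\constl$, the power $\sfk_\ep^3$, and the exponent $\frac43\constd$. The paper then bounds this by $I^\ep(\constd)$, tacitly using $T\ge1$.
\end{itemize}

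If you want to keep a single time, choose it so that the main term is already $\ep^{\constd}Q^\ep$, namely $t=(3T)^{-1/2}\ep^{\constd/2}$, which lies in $[0,\ep^{\constd/3}]$. Then the remainder is $\eta(\ep,t)=O(\sfk_\ep^3\ep^{3\constd/2})\,e^{\constm\ep^{\constd/3}\sfk_\ep}$. A short check shows this is at most $I^\ep(\constd)$ under the same normalisation $T\ge1$.
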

\noindent
The proof is given in \autoref{sec:proof-ident-1}.

%

%
%
Now let 
\bea\label{202508171719}
\varphi_\ep
&=& 
\sup_{\theta_\ep\in\Theta_\ep}\big|Q^\ep(\theta_\ep)-E\big[Q^\ep(\theta_\ep)\big]\big|
\eea
and define the event
\bea\label{202508210601}
\Omega^\ep_0(\nu)=\big\{\varphi_\ep\leq2^{-1}\inf_{\theta_\ep\in U_\ep(\nu)^c}E\big[Q^\ep(\theta_\ep)\big]\big\}. 
\eea
The dependence on $Q^\epsilon$ of the forthcoming error bound will be in terms of the probability of the event $\Omega^\ep_0(\nu)$. 

Thanks to the setting above,
the quantities 
$\Xi_r^\ep(\nu,A), \Xi_\infty^\ep(\nu,A)$
in \eqref{202508070650} - \eqref{202508070739}, 
can be replaced by explicit terms that do not depend on the identifiability gap. For any sequences $(A_\ep)_{\ep\in(0, 1]}$ and $(\nu_\ep)_{\ep\in (0, 1]}$ of positive numbers, 
define $\xi^\ep_r$ as 
\bea \label{eq:xir-def} 
\xi^\ep_r
&=& 
\constr\constn A_\ep^{-1}\ep^{-1+(\constv)/2}\nu_\ep\exp\bigg(-\consto (\max_{i\in[\sfp_\ep]}\sfk^\ep_i)^{r-1}{\sf o}_\ep\bigg)
\eea
for $r\in[2,\infty)$. 
Define $\xi^\ep_\infty$ by 
\bea \label{eq:xi-inf-def}
\xi^\ep_\infty
&=& 
2^{-1}\constr\constp A_\ep^{-1}\ep^{-1+(\constv)/2}\nu_\ep\exp\bigg(-\constq \max_{i\in[\sfp_\ep]}\sfk^\ep_i\bigg).
\eea
%
%

%

Let 
\beas 
\wt{\psi}^{ij}
&=& 
\psi^{ij}\big(x^{\ep,i}_0,x^{\ep,j}_0\big)-E_{\calb^\ep_i}\big[\psi^{ij}\big(x^{\ep,i}_0,x^{\ep,j}_0\big)\big].  
\eeas
The following condition is a sufficient condition for \ref{as:ident}. 
\begin{enumerate}[label=(L$^\sharp$)]

\item \label{as:ident-suff}
For each $\ep\in(0,1]$ and $i\in[\sfd_\ep]$, there exists a sub $\sigma$-field $\calb^\ep_i$ of $\calf$ such that 
\begin{enumerate}[label=(\roman*)]
\item 
$\ds
E\big[\wt{\psi}^{ij}\wt{\psi}^{ij'}\big]
=0
$
for all $j,j'\in N^\ep_i$ with $j\not=j'$. 
\item 
$\ds
\inf\bigg\{E\big[(\wt{\psi}^{ij})^2\big];\>
j\in\checkN_i^\ep,\>
i\in[\sfd_\ep],\>\ep\in(0,1]\bigg\}
>
0$. 
\end{enumerate}
\end{enumerate}

The orthogonality condition (i) of \ref{as:ident-suff} is satisfied if $\big\{\psi^{ij}(x^{\ep,i}_0,x^{\ep,j}_0)\big\}_{j\in N^\ep_i}$
are $\calb^\ep_i$-conditionally independent. 
The $\sigma$-field $\calb^\ep_i$ is arbitrary. It can be the trivial $\sigma$-field. 
An example of $\calb^\ep_i$ is the $\sigma$-field $\sigma[x^{\ep,i}_0]$ generated by $x^{\ep,i}_0$.

We now specialize the general error bound of Theorem \ref{thm:err-bound} to the repeated-measurements linear interaction model.

\begin{theorem}\label{thm:est-bound}
Suppose that Conditions \ref{as:drift-lin}, \ref{as:diff} and \ref{as:ident} are fulfilled. 
Let $r\in[2,\infty]$, $A_\ep > 0$ and $\nu_\ep > 0$. 
Then, for every $\ep\in(0,T^{3/\constd}\wedge1]$,
\bea\label{202508162002}
P\big[\rho_\ep(\wh{\theta}_\ep^r,\theta_\ep^*)\geq\nu_\ep\big]
&\leq&
2\constc\sfd_\ep e^{-(\xi^\ep_r)^2}+P\big[(\Omega^\ep_0(\nu_\ep))^c\big]+P\big[T(\ep,A_\ep)<T\big]
\nn\\&&
+1_{\big\{
\constr\ep^{(\constv)/2}\nu_\ep<(I^\ep(\constd)^{1/2})\vee(4\Delta^\ep_r)\big\}}.
\eea

\end{theorem}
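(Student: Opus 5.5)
We apply Theorem \ref{thm:err-bound} with $A=A_\ep$, $\nu=\nu_\ep$ and $\xi=\xi^\ep_r$. This gives
\[
P\big[\rho_\ep(\wh{\theta}_\ep^r,\theta_\ep^*)\geq\nu_\ep\big]\leq 2\constc\sfd_\ep e^{-(\xi^\ep_r)^2}+P\big[\Xi^\ep_r(\nu_\ep,A_\ep)<\xi^\ep_r\big]+P\big[T(\ep,A_\ep)<T\big].
\]
Only the middle term needs work. We will show that on the event $\Omega^\ep_0(\nu_\ep)$, intersected with the deterministic event $\{\constr\ep^{(\constv)/2}\nu_\ep\geq (I^\ep(\constd)^{1/2})\vee(4\Delta^\ep_r)\}$, the inequality $\Xi^\ep_r(\nu_\ep,A_\ep)\geq\xi^\ep_r$ holds. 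Then $P[\Xi^\ep_r<\xi^\ep_r]\leq P[(\Omega^\ep_0(\nu_\ep))^c]$ plus the indicator of the complementary deterministic event, which is exactly (\ref{202508162002}). Note that $\Delta^\ep_r$ and $I^\ep$ are deterministic by Lemma \ref{lem:delta} and (\ref{eq:i-def}). For $r=\infty$ we use $\Delta^\ep_\infty$ and the analogous bookkeeping, reading $\Delta^\ep_r$ in the indicator accordingly.

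The first step is a lower bound on $E[Q^\ep(\theta_\ep)]$ from \ref{as:ident}. Write $\psi^{ij}=\wt\psi^{ij}+E_{\calb^\ep_i}[\psi^{ij}]$ and use $E|Y|^2\geq E[\mathrm{Var}_{\calb^\ep_i}(Y)]$ with $Y=\sum_j(\theta^{ij}-\theta^{ij*})\psi^{ij}$. This gives
\[
E\Big|\sum_{j}(\theta^{ij}-\theta^{ij*})\psi^{ij}\Big|^2\geq \constk\ep^\beta\sum_j(\theta^{ij}-\theta^{ij*})^2.
\]
The indexing (\ref{202508202034}) makes the $n$ blocks carry the same parameter, so averaging over $i\in[\sfd_\ep]$ reproduces $\rho_\ep^2$ from (\ref{202508171526}). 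Hence
\[
E[Q^\ep(\theta_\ep)]\geq (3T)^{-1}\constk\ep^\beta\rho_\ep(\theta_\ep,\theta_\ep^*)^2,
\]
and on $U_\ep(\nu_\ep)^c$ this is at least $(3T)^{-1}\constk\ep^\beta\nu_\ep^2$. On $\Omega^\ep_0(\nu_\ep)$ we have $Q^\ep\geq E[Q^\ep]-\varphi_\ep\geq \frac12 E[Q^\ep]$ uniformly on $U_\ep(\nu_\ep)^c$. Lemma \ref{lem:ident-gap-1} then yields
\[
g_r^\ep(\nu_\ep)^{2/r}\geq \ep^{\constd}(6T)^{-1}\constk\ep^\beta\nu_\ep^2-I^\ep(\constd).
\]
We fix the constant as $\constr^2=(6T)^{-1}\constk\cdot c$ for a suitable numerical $c$, for instance $c=1/2$. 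Under $\constr\ep^{(\constv)/2}\nu_\ep\geq I^{1/2}$ we then get $g_r^{1/r}\geq c'\constr\ep^{(\constv)/2}\nu_\ep$ for some absolute $c'$. The precise value of $\constr$ should be chosen so that $c'$ absorbs this loss. If needed, the main statement's implicit definition of $\constr$ (not shown before the statement) must be matched here.

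The second step handles the nuisance discrepancy. Under $4\Delta^\ep_r\leq \constr\ep^{(\constv)/2}\nu_\ep$ we get
\[
g_r^{1/r}-2\Delta^\ep_r\geq \tfrac12\constr\ep^{(\constv)/2}\nu_\ep
\]
up to the same constant. Inserting this into (\ref{202508070650}) gives
\[
\Xi^\ep_r\geq \constr\constn A_\ep^{-1}\ep^{-1+(\constv)/2}\nu_\ep e^{-\consto\sfk_\ep^{r-1}{\sf o}_\ep}=\xi^\ep_r .
\]
For $r=\infty$ we use $g_\infty\geq g_2^{1/2}$ and (\ref{202508070739}); this explains the extra factor $2^{-1}$ in $\xi^\ep_\infty$, since $\Xi_\infty$ lacks the prefactor $2$. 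We also note that $\max_{i\in[\sfd_\ep]}\sfk^\ep_i=\max_{i\in[\sfp_\ep]}\sfk^\ep_i$ and that ${\sf o}_\ep$ is unchanged under the block structure, because blocks do not interact.

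The main obstacle is the constant bookkeeping in the first step: making $\constr$ simultaneously absorb the factor $1/2$ from $\Omega^\ep_0$, the subtraction of $I^\ep$, and the subtraction of $2\Delta^\ep_r$, so that the result equals exactly $\xi^\ep_r$. The cleanest route is to split $\ep^{\constd+\beta}\nu_\ep^2(6T)^{-1}\constk$ into halves, one to dominate $I^\ep$ and one to survive. Then we take square roots using $\sqrt{a-b}\geq\sqrt{a/2}$ when $b\leq a/2$, and finally dominate $2\Delta^\ep_r$ by half of what remains.
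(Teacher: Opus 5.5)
Your proposal is correct and follows the paper's proof essentially step by step. It chooses $\xi=\xi^\ep_r$ in Theorem \ref{thm:err-bound}, derives $E[Q^\ep(\theta_\ep)]\geq(3T)^{-1}\constk\ep^\beta\rho_\ep(\theta_\ep,\theta_\ep^*)^2$ from \ref{as:ident}, halves on $\Omega^\ep_0(\nu_\ep)$, and absorbs $I^\ep(\constd)$ and $2\Delta^\ep_r$; your choice $c=1/2$ gives exactly the paper's $\constr=(2\sqrt{3})^{-1}\constk^{1/2}T^{-1/2}$ with $c'=1$. Before invoking Theorem \ref{thm:err-bound} you should still record, as the paper does, that \ref{as:drift-lin} implies \ref{as:lip} with $\consta=\consti(\constj\vee1)$.

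One correction: $\Delta^\ep_r$ is not deterministic. It depends on the random $\calf_0$-measurable initial values through $x^{\ep,i}_s$ and $z^{\ep,i}_s$, and Lemma \ref{lem:delta} only dominates it by a deterministic quantity. Strictly, the last term is therefore $P\big[\constr\ep^{(\constv)/2}\nu_\ep<(I^\ep(\constd)^{1/2})\vee(4\Delta^\ep_r)\big]$, or the indicator with $\Delta^\ep_r$ replaced by the bound of Lemma \ref{lem:delta}. The paper's statement shares this imprecision.
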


The quantities $A_\ep$ and $\nu_\ep$ are allowed to depend on $\ep$. 
No additional restriction on these sequences is needed for the present finite-sample bound. 
In the next section, we will specify suitable choices of $A_\ep$ and $\nu_\ep$ in order to derive consistency and convergence rates.

\noindent
The proof is given in \autoref{sec:proof-est-bound}. 


\begin{remark}
The terms appearing in the bound of Theorem \ref{thm:est-bound} can be controlled explicitly.
More precisely, the quantity $\xi_r^\ep$ is given by \eqref{eq:xir-def}--\eqref{eq:xi-inf-def}, the nuisance term $\Delta_r^\ep$ is bounded in Lemma \ref{lem:delta}, and the diffusion-threshold term $P\big[T(\ep,A_\ep)<T\big]$ is controlled in Lemma \ref{lem:diff-bound}.
The remaining term $P\big[(\Omega^\ep_0(\nu_\ep))^c\big]$ requires an additional mixing condition, and is controlled in Lemma \ref{lem:omega-bound}.
\end{remark}

\subsection{Consistency and convergence rates}\label{sec:cons-balance}

We are now ready to discuss the consistency of the minimum distance estimator 
\eqref{eq:md-est} starting from the bound in \autoref{thm:est-bound}. This result requires a certain mixing condition. Moreover, it is crucial to identify suitable scaling conditions that control the growth of the graph in relation with the noise level. 
In the following we work in the case $r \in [2, \infty)$. 
See Remark \ref{rem:l-inf} for a discussion of the case $r = \infty$. 

Specifically, we assume the following $\alpha$-mixing condition. 
\begin{enumerate}[label=(B\arabic*)]
    \item
    \label{as:mixing}
    Let $y^\ep_m=(x^{\ep,i}_0)_{i\in\cali^\ep_m}$ for $m\in[n]$. 
Let 
$\calc_m^\ep=\sigma\big[x^{\ep,i}_0;\>i\in\cali_m^\ep\big]$. 
Define $\alpha^\ep(h)$ for $h\in\bbN$ as 
\beas 
\alpha^\ep(h) 
&=& 
\sup\bigg\{\big|P[{\tt A}\cap {\tt B}]-P[{\tt A}]P[{\tt B}]\big|;\>
{\tt A}\in\calc_{\tt a}^\ep,\>
{\tt B}\in\calc_{\tt b}^\ep,\>{\tt a}, {\tt b}\in[n],\>{\tt b}-{\tt a}\geq h\bigg\}
\eeas
for $h=0,...,n-1$, and $\alpha^{\ep}(h)=0$ for $h\geq n$, and suppose that 
\[
\sum_{h=0}^{\infty} \left(1 - \frac hn \right) \alpha^\ep(h) < \infty. 
\]

\end{enumerate}

We remark that Assumption \ref{as:mixing} is weaker than the usual definition assuming 
${\tt A}\in\bigvee_{m\in[n]:m\leq{\tt a}}\calc_m^\ep$ and ${\tt B}\in\bigvee_{m\in[n]:m\geq{\tt b}}\calc_m^\ep$. 

This condition is needed for controlling the 
probability of the event $\Omega_0^\epsilon$, 
as the next Lemma shows. 
\begin{lemma}\label{lem:omega-bound}
Suppose that Conditions \ref{as:drift-lin}, \ref{as:mixing} and \ref{as:ident} are fulfilled. Then 
\begin{align}\label{202508210651}
P\big[(\Omega^\ep_0(\nu))^c\big]
&\leq
{16\sqrt{2}}(\constj)^2\constu\constk^{-1}\>
\bigg\{\sum_{h=0}^{n-1}(1-n^{-1}h)\alpha^\ep(h)\bigg\}^{1/2}
\epsilon^{-\beta}
\nu^{-2}
(\max_{i\in[\sfp_\ep]}\sfk^\ep_i)^2\>
n^{-1/2}
\end{align}
for $\nu>0$, $\ep\in(0,1]$ and $n\in\bbN$. 
\end{lemma}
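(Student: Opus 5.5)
The plan is Markov's inequality applied to $\varphi_\ep$, after a lower bound on $\inf_{U_\ep(\nu)^c}E[Q^\ep]$ and an $L^2$ bound on $\varphi_\ep$ that comes from the mixing covariance inequality.

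\emph{Lower bound.} For $\theta_\ep\in U_\ep(\nu)^c$ put $\delta^{ij}=\theta^{ij}_\ep-\theta^{ij*}_\ep$, extended to $i\in[\sfd_\ep]$ by (\ref{202508202034}). Write $\psi^{ij}=\wt\psi^{ij}+E_{\calb^\ep_i}[\psi^{ij}]$. Since $E[\wt\psi^{ij}\,\cdot\,]$ vanishes against $\calb^\ep_i$-measurable variables, we get
\[
E\Big|\sum_j\delta^{ij}\psi^{ij}\Big|^2\ \ge\ E\Big|\sum_j\delta^{ij}\wt\psi^{ij}\Big|^2\ \ge\ \constk\ep^\beta\sum_j(\delta^{ij})^2
\]
by \ref{as:ident}. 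Averaging over the $n$ blocks gives
\[
E[Q^\ep(\theta_\ep)]\ \ge\ 3^{-1}T^{-1}\constk\ep^\beta\rho_\ep(\theta_\ep,\theta^*_\ep)^2\ \ge\ 3^{-1}T^{-1}\constk\ep^\beta\nu^2 .
\]
Hence $(\Omega^\ep_0(\nu))^c\subset\{\varphi_\ep>c\,\constk\ep^\beta\nu^2\}$, and Markov yields
\[
P[(\Omega^\ep_0(\nu))^c]\ \le\ C\constk^{-1}\ep^{-\beta}\nu^{-2}E[\varphi_\ep].
\]

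\emph{Removing the supremum.} $Q^\ep(\theta_\ep)$ is a quadratic form in $\delta$:
\[
Q^\ep(\theta_\ep)=3^{-1}T^{-1}\sfp_\ep^{-1}\sum_{i\in[\sfp_\ep]}\sum_{j,j'\in N^\ep_i}\delta^{ij}\delta^{ij'}\,\bar M^{ijj'},\qquad
\bar M^{ijj'}=n^{-1}\sum_{m\in[n]}\psi^{i_mj_m}\psi^{i_mj'_m}\big(x_0\big),
\]
where $i_m,j_m$ are the block-$m$ copies. By \ref{as:drift-lin}(iii), $|\delta^{ij}|\le2\constj$. Therefore
\[
\varphi_\ep\ \le\ C(\constj)^2\,\sfp_\ep^{-1}\sum_i\sum_{j,j'}\big|\bar M^{ijj'}-E\bar M^{ijj'}\big|,
\]
a bound uniform in $\theta_\ep$. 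There are at most $\sfk_\ep^2$ pairs $(j,j')$ per $i$, which is where the factor $(\max\sfk^\ep_i)^2$ comes from.

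\emph{Variance of block averages.} By Cauchy–Schwarz it suffices to bound $\mathrm{Var}(\bar M^{ijj'})$. The summand $Y_m=\psi^{i_mj_m}\psi^{i_mj'_m}$ is $\calc^\ep_m$-measurable and bounded by $\constu:=\consti^2(1+\constt)^2$, using \ref{as:drift-lin}(i),(ii). The covariance inequality for bounded $\alpha$-mixing variables gives $|\mathrm{Cov}(Y_a,Y_b)|\le4\constu^2\alpha^\ep(|b-a|)$. Note this only needs the pairwise mixing coefficients of \ref{as:mixing}, since $Y_a$ and $Y_b$ each depend on a single block. Counting pairs with $|b-a|=h$,
\[
\mathrm{Var}(\bar M)\ \le\ n^{-2}\sum_{a,b}|\mathrm{Cov}|\ \le\ 8\constu^2n^{-1}\sum_{h=0}^{n-1}(1-h/n)\alpha^\ep(h).
\]
Taking square roots produces the factor $\{\sum(1-h/n)\alpha^\ep(h)\}^{1/2}n^{-1/2}$ together with $2\sqrt2\,\constu$.

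Combining the three steps gives the inequality (\ref{202508210651}) up to tracking the constant $16\sqrt2$; the factors $3T$ cancel with those in the lower bound. The main obstacle is the first step. It requires that $E\big[E_{\calb^\ep_i}[\psi^{ij}]\,\wt\psi^{ij'}\big]=0$ legitimately reduces $E|\sum_j\delta^{ij}\psi^{ij}|^2$ to the conditional covariance matrix in \ref{as:ident}. It also requires handling the fact that \ref{as:ident} is stated for $i\in[\sfd_\ep]$, so the bound must be averaged over blocks without using stationarity.
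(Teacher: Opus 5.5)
Your proposal is correct and follows the paper's proof essentially step for step. The steps are: the lower bound $\inf_{U_\ep(\nu)^c}E[Q^\ep]\ge 3^{-1}T^{-1}\constk\ep^\beta\nu^2$ from \ref{as:ident}, Markov's inequality, removal of the supremum via $|\delta^{ij}\delta^{ij'}|\le 4(\constj)^2$ with grouping by block position (your $n(\bar M^{ijj'}-E\bar M^{ijj'})$ is exactly the paper's $\sum_{i:r(i)=r}\wt{\psi}^{\otimes2}_i(r_1+u_\ep(i),r_2+u_\ep(i))$), and the $\alpha$-mixing covariance inequality for bounded single-block variables. Your constant tracking also closes: $6T\cdot\frac{4}{3T}\cdot 2\sqrt2=16\sqrt2$, and because you bound the uncentered products by $\consti^2(1+\constt)^2$, which is half of the paper's $\constu$, you obtain the stated inequality with room to spare.
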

\noindent The proof is given in \autoref{sec:proof-omega}.

Moreover, we introduce the following balance conditions.

\begin{enumerate}[label=(B\arabic*), resume]
\item \label{as:balance}
Given positive constants $\lS, c_1, \alpha$ and $\beta \geq 0$ such that
\begin{enumerate}[label=(\roman*)]
    \item 
        \begin{gather}
            c_1 + \beta < 2   \label{eq:bc1}\\
            c_1 > 3 \beta \label{eq:bc2} \\
            \lS + \frac{\constv} 2 < \alpha \wedge 1 \label{eq:bc3},
        \end{gather}
    assume the following balance conditions:
    \item (Dimension scaling) 
    for some $\lG>0$, 
    \[\sfp_\epsilon = \mathcal O\left( \epsilon^{-\lG} \right). \]

    \item (Graph sparsity scaling) 
    \bea \label{eq:bal-ko}
    \exp\{(C_{3,r}\vee 1) \sfk^{r-1}_\epsilon \mathsf{o}_\epsilon \} = \mathcal O \left (\epsilon^{-\lS} \right). \quad 
    \eea

    \item (Sample size growth)
    for some $\lN > 0$,
    \bea \label{eq:bal-n}
    {\sqrt n_\ep} =  \mathit{\Theta} \left(\ep^{-\beta - \lN} \right) 
    \eea

    \item (Noise threshold)
    for some $\lA > 0$,
    \[ A_\epsilon =  \mathit{\Theta} \left( \sfk_\epsilon^{1 + \lA} \right).
    \footnote{We write $f(\ep)= \mathit{\Theta}(g(\ep))$ if there exist $m, M > 0$ such that $m g(\epsilon) \leq f(\epsilon) \leq M g(\epsilon) $ for all $\epsilon < \epsilon_0$.} 
    \]

\end{enumerate}
\end{enumerate}

The graph sparsity scaling condition intuitively represents a  logarithmic growth of the graph connectivity with respect to the number of nodes, namely,
    \beas
     \sfk^{r-1}_\epsilon \mathsf{o}_\epsilon = \mathcal O( \log \sfp_\epsilon).
    \eeas
Explicit scalings in terms of $\ep$ can be obtained by combining the preceding conditions with the dimension scaling. 
For instance, one admissible choice is
\bea\label{eq:bal-ko2}
\sfk_\epsilon = \mathcal O\left( \left(\log \left(\frac 1 \ep \right)\right) ^{\frac 1{2(r-1)}}\right),
\qquad 
\mathsf{o}_\epsilon = \mathcal O\left( \left(\log \left(\frac 1 \ep \right) \right)^{\frac 1{2}}\right).
\eea
The particular exponents in \eqref{eq:bal-ko2} are not essential; they are chosen only to provide a concrete admissible scaling and to simplify the notation in the proof. 
More generally, any growth regime satisfying \eqref{eq:bal-ko} is allowed.
In the proofs, we occasionally refer to the explicit scaling \eqref{eq:bal-ko2} for readability.
Similarly, for the noise threshold, with the previous choice of $\sfk_\epsilon$,  we can take
    \begin{equation}\label{eq:A-bc}
        A_\epsilon  =   \mathit{\Theta} \left( \left(\log \left(\ep^{-1} \right) \right)^{\frac {1+\lA}{2(r-1)}}  \right)
    \end{equation}

We stress the fact that the constants
$\lN, \lA >0$ can be arbitrarily small. A quantity like 
$\mathit{\Theta} \left( \sfk_\epsilon^{1 + \lA} \right)$
represents any polynomial in $\sfk_\ep$ of order larger than 1.  

The next theorem derives consistency from the specialized finite-sample bound
\autoref{thm:est-bound}, under suitable balance conditions on graph growth, sample size, and noise level.

\begin{remark}\label{rem:l-inf}
    Super-spreader hubs and $\ell_\infty$ norm. We point out that in the case $r = \infty$ condition can be replaced by the weaker 
    \[\exp\bigg(
    (\constq \vee 1)\max_{i\in[\sfp_\ep]}\sfk^\ep_i\bigg) = 
     \mathcal O \left (\epsilon^{-\lS} \right).
    \]
    In particular the condition above does not restrict the out-degree while still implying a logarithmic growth in the in-degree
    \beas
     \sfk_\epsilon= \mathcal O( \log \sfp_\epsilon).
    \eeas
    This means that the $\ell_\infty$ is a suitable choice for dealing with graphs that exhibit super-spreader hubs, that is nodes with large out-degree. 
\end{remark}

\begin{theorem}\label{thm:cons}
Suppose that Conditions \ref{as:drift-lin}, \ref{as:diff} and \ref{as:ident}, \ref{as:mixing} are fulfilled.
Then under the balance conditions in Assumption \ref{as:balance}, for all $\nu >  0$,
\bea\label{202508220815}
P\big[\rho_\ep(\wh{\theta}_\ep^r,\theta_\ep^*)\geq\nu \big]
&\to&
0
\eea
as $\ep\to0$.
\end{theorem}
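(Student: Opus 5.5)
The argument applies Theorem~\ref{thm:est-bound} with the fixed level $\nu_\ep\equiv\nu$ and the threshold $A_\ep$ from \ref{as:balance}(v). It then shows that each of the four terms on the right of \eqref{202508162002} vanishes as $\ep\to0$. Since only the case $r\in[2,\infty)$ is treated, the constants $\xi^\ep_r$, $\consto$ and $\constn$ from \eqref{eq:xir-def} are the relevant ones. We also take $\ep\le T^{3/\constd}\wedge1$ throughout, which is harmless as $\ep\to0$.

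\emph{Step 1 (indicator term).} By Lemma~\ref{lem:delta}, $\Delta^\ep_r\lesssim \ep^\alpha\sfk_\ep^2\exp(C\sfk_\ep)$. Under \eqref{eq:bal-ko}, $\sfk_\ep$ grows at most logarithmically, so $\exp(C\sfk_\ep)=\mathcal O(\ep^{-\eta})$ for every $\eta>0$. Hence $\Delta^\ep_r=o(\ep^{\alpha-\eta})$, and \eqref{eq:bc3} gives $\alpha>(\constv)/2$, so $\Delta^\ep_r=o(\ep^{(\constv)/2})$. For $I^\ep(\constd)$, the exponent $\constm\ep^{\constd/3}\sfk_\ep$ tends to $0$, so $I^\ep(\constd)^{1/2}\lesssim\sfk_\ep^{3/2}\ep^{2\constd/3}$. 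This is $o(\ep^{(\constv)/2})$ exactly when $2\constd/3>(\constd+\beta)/2$, i.e.\ $\constd>3\beta$, which is \eqref{eq:bc2}. So the indicator in \eqref{202508162002} is $0$ for all small $\ep$.

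\emph{Step 2 (main Gaussian term).} Write
\[
\xi^\ep_r\asymp A_\ep^{-1}\ep^{-1+(\constv)/2}\exp\bigl(-\consto\sfk_\ep^{r-1}{\sf o}_\ep\bigr).
\]
By \eqref{eq:bal-ko}, the exponential factor is at least $c\,\ep^{\lS}$. Since $A_\ep$ is polylogarithmic in $\ep^{-1}$, we get $\xi^\ep_r\gtrsim \ep^{-(1-(\constv)/2-\lS)+\eta}$ for every $\eta>0$. The exponent is positive by \eqref{eq:bc3}, which uses $\alpha\wedge1\le1$, so $\xi^\ep_r\ge\ep^{-\kappa}$ for some $\kappa>0$. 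Also $\sfd_\ep=n\sfp_\ep$ is polynomial in $\ep^{-1}$ by \ref{as:balance}(ii) and (iv). Therefore $\sfd_\ep e^{-(\xi^\ep_r)^2}\le C\ep^{-M}e^{-\ep^{-2\kappa}}\to0$.

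\emph{Step 3 (event $\Omega^\ep_0$).} Lemma~\ref{lem:omega-bound} bounds this probability by a constant times
\[
\Bigl\{\textstyle\sum_h(1-h/n)\alpha^\ep(h)\Bigr\}^{1/2}\ep^{-\beta}\nu^{-2}\sfk_\ep^2n^{-1/2}.
\]
The mixing sum is bounded by \ref{as:mixing}, which I read as uniform in $\ep$. By \eqref{eq:bal-n}, $n^{-1/2}\asymp\ep^{\beta+\lN}$, so the bound is $\lesssim\sfk_\ep^2\ep^{\lN}\to0$ because $\sfk_\ep$ is logarithmic.

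\emph{Step 4 (diffusion threshold).} Apply Lemma~\ref{lem:diff-bound}. First, condition \eqref{eq:a-cond} must hold. Under \ref{as:drift-lin} and Gronwall, $\sup_{i,s}|x^{\ep,i}_s|\le C\sfk_\ep e^{C\sfk_\ep}$. This is where I expect the main obstacle: such a bound is not dominated by $A_\ep=\Theta(\sfk_\ep^{1+\lA})$ unless $\sup|x^{\ep,i}_s|$ stays bounded. I would therefore first try to sharpen it using the bounded initial values and bounded parameters, replacing the $\ell_1$-type Lipschitz sum by a componentwise Gronwall argument on $\max_i|x^{\ep,i}_s|$. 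This gives $\sup\le (C+CT\sfk_\ep)e^{C\sfk_\ep T}$, which still grows. If the sharpening fails, I would read \ref{as:balance}(v) as permitting $A_\ep$ to absorb this factor, which is subpolynomial in $\ep^{-1}$ by \eqref{eq:bal-ko}. Once \eqref{eq:a-cond} holds, the quantity $x$ in Lemma~\ref{lem:diff-bound} satisfies $x/(A_\ep\ep)\gtrsim\ep^{-1}\sfk_\ep^{-1}e^{-\consta T\sfk_\ep}\ge\ep^{-1+\eta}$. Hence $P[T(\ep,A_\ep)<T]\le C\sfd_\ep\,\ep^{1-\eta}e^{-c\ep^{-2+2\eta}}\to0$. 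Combining Steps~1--4 in \eqref{202508162002} gives \eqref{202508220815}.
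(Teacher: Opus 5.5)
Your plan is the paper's: apply Theorem~\ref{thm:est-bound} with $\nu_\ep\equiv\nu$ and show that each of the four terms vanishes. Your handling of $I^\ep(\constd)$ (via $\constd>3\beta$), of $\xi^\ep_r$ and of $\Omega^\ep_0$ matches the paper. Your reading of \ref{as:mixing} as uniform in $\ep$ is also what the paper tacitly uses. Two points need fixing.

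\emph{The ``for every $\eta$'' claims.} \eqref{eq:bal-ko} does not give $\exp(C\sfk_\ep)=\mathcal O(\ep^{-\eta})$ for every $\eta>0$. It only gives $\sfk_\ep\le\sfk_\ep^{r-1}{\sf o}_\ep\le\lS\log(1/\ep)+\mathcal O(1)$. So $\sfk_\ep$ may be of exact order $\log(1/\ep)$ (e.g.\ $r=2$ with ${\sf o}_\ep$ bounded), and then $e^{C\sfk_\ep}$ is a fixed power of $\ep^{-1}$. That your Step~1 never uses $\lS$ is the symptom. As in the paper, compare the exponent in Lemma~\ref{lem:delta} with the one in \eqref{eq:bal-ko} to get $\Delta^\ep_\infty\lesssim\ep^{\alpha-\lS}$ up to logarithms, and then invoke the full \eqref{eq:bc3}, namely $\alpha-\lS>(\constv)/2$. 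In Step~4, use $\consta T\sfk_\ep\le(\consto\vee1)\sfk_\ep^{r-1}{\sf o}_\ep$. This gives $x/(A_\ep\ep)\gtrsim\ep^{-(1-\lS)}$ up to logarithms, which suffices since $\lS<1$. These repairs are routine.

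\emph{Condition \eqref{eq:a-cond}.} Your worry here is justified, and the paper does not settle it. Its proof only asserts that the polylogarithmic choice \eqref{eq:A-bc} suffices. That would need $\sup_{i,s}|x^{\ep,i}_s|=o(\sfk_\ep^{\lA})$, whereas the Gronwall bound \eqref{202508110733} is of order $\sfk_\ep e^{C\sfk_\ep}$. This bound cannot be improved in general: with $\psi^{ij}(x,y)=x+y$, $\theta^{ij*}\equiv1$ and positive initial values, $x^{\ep,i}_t\ge x^{\ep,i}_0e^{|N^\ep_i|t}$.

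Enlarging $A_\ep$ is legitimate. $A_\ep$ is only a device in Theorem~\ref{thm:est-bound}, and once \eqref{eq:a-cond} holds the bound of Lemma~\ref{lem:diff-bound} depends on $A_\ep$ only through $x/(A_\ep\ep)$, which is free of $A_\ep$. However, it multiplies $\xi^\ep_r$ by roughly $\sfk_\ep^{-2}e^{-C\sfk_\ep}$, so Step~2 must be redone. That factor is $\ep^{o(1)}$ only if $\sfk_\ep=o(\log(1/\ep))$.

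This holds under the explicit scaling \eqref{eq:bal-ko2}, where $\sfk_\ep=\mathcal O((\log(1/\ep))^{1/2})$. In that regime your whole argument goes through, including the ``for every $\eta$'' steps. Under \eqref{eq:bal-ko} alone, the lost power of $\ep$ is not covered by \eqref{eq:bc3}. You would then need an extra hypothesis, for example:
\begin{itemize}
\item a uniformly bounded noiseless trajectory;
\item bounded noise as in \eqref{eq:bounded-noise}, in which case $A_\ep\equiv1$ gives $T(\ep,1)=T$;
\item a strengthened version of \eqref{eq:bc3}.
\end{itemize}
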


\noindent
In \autoref{sec:proof-cons}, we prove \autoref{thm:cons} by showing convergence to zero of each term on the right-hand side of 
the inequality (\ref{202508162002}) under the given set of balance conditions.

As a consequence of \autoref{thm:cons} we are able to derive rates of convergence for the estimator. 
In fact, we see in \autoref{thm:rates} that faster rates can
be obtained under milder growth conditions. 
First, we need to introduce the following modified set of balance conditions. 
\begin{enumerate}[label=(B\arabic*), resume]

\item \label{as:balance2}
\textit{(Conditions for improved rates)}. 
Given positive constants $c_1, \alpha,  \gamma, \lS$ and $\beta \geq 0$ that satisfy
\begin{enumerate}[label=(\roman*)]
    \item 
        \begin{gather}
            c_1 + \beta < 2   \label{eq:bc4}\\
            c_1 > 3 (\beta + 2\gamma) \label{eq:bc5} \\
            \lS + \gamma + \frac{\constv} 2 < \alpha \wedge 1 \label{eq:bc6},
        \end{gather}
    assume the following balance conditions:
    \item (Dimension scaling)  \[\sfp_\epsilon = \mathcal O\left( \epsilon^{-\lG} \right) \]  for some $\lG>0$; 
    \item (Graph sparsity scaling) 
    \bea \label{eq:bal-ko3}
    \exp\{(C_{3,r}\vee 1) \sfk^{r-1}_\epsilon \mathsf{o}_\epsilon \} = \mathcal O \left ( \epsilon^{-\lS} \right). \quad 
    \eea

    \item (Sample size growth)
    For some $\lN > 0$
    \bea \label{eq:bal-n3}
    {\sqrt n_\ep} =  \mathit{\Theta} \left( \ep^{-2\gamma - \beta - \lN} \right) 
    \eea 

    \item (Noise threshold)
    For some $\lA > 0$,
    \[ 
    A_\epsilon =  \mathit{\Theta} \left( \sfk_\epsilon^{1 + \lA} \right), \]
    
\end{enumerate}
\end{enumerate}

\begin{remark}
    Note that under Assumption \ref{as:balance2} a faster growth of the sample size is required compared to \ref{as:balance}. 
    Moreover a tighter constraint on $\lS$ -- controlling the graph sparsity -- appears in Assumption \ref{as:balance2}. This is due to the presence of the additional parameter $\gamma$ which controls the convergence rate of the estimator. 
    These differences reflect a different rate of convergence of the estimator, which we label \emph{slow} and \emph{improved} rates in the next theorem. Intuitively, under milder dimension growth, and a larger order of the  sample size,  faster convergence of the estimator can be achieved. 
\end{remark}



\begin{theorem}\label{thm:rates}
Suppose that Conditions \ref{as:drift-lin}, \ref{as:diff}, \ref{as:ident}, and \ref{as:mixing}, are fulfilled.
Then the following convergence rates hold in the metric $\rho_\ep$.

\begin{enumerate}[label=(\roman*)]
    \item (Slow rate)
    Under Assumption \ref{as:balance},
    \[
    \log(1/\ep)\,\rho_\ep(\wh{\theta}_\ep^r,\theta_\ep^*) \to 0
    \qquad\text{in probability as }\ep\to0.
    \]

    \item (Improved rate)
    Under Assumption \ref{as:balance2},
    \[
    \ep^{-\gamma}\,\rho_\ep(\wh{\theta}_\ep^r,\theta_\ep^*) \to 0
    \qquad\text{in probability as }\ep\to0.
    \]
\end{enumerate}
\end{theorem}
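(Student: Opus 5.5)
The plan is to apply Theorem \ref{thm:est-bound} with a shrinking radius $\nu_\ep$ and show that every term on the right-hand side of (\ref{202508162002}) vanishes, exactly as in the proof of Theorem \ref{thm:cons} but with $\nu$ replaced by $\nu_\ep\to0$. For part (i) take $\nu_\ep=\nu/\log(1/\ep)$ with $\nu>0$ fixed. For part (ii) take $\nu_\ep=\nu\ep^{\gamma}$. Convergence in probability of $\log(1/\ep)\rho_\ep$ (resp. $\ep^{-\gamma}\rho_\ep$) to zero is then precisely $P[\rho_\ep(\wh\theta^r_\ep,\theta^*_\ep)\ge\nu_\ep]\to0$ for every fixed $\nu>0$. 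The choice of $A_\ep$ is the same as in Theorem \ref{thm:cons}, $A_\ep=\mathit{\Theta}(\sfk_\ep^{1+\lA})$. Since $\sfk_\ep$ is bounded by a power of $\log(1/\ep)$ by (\ref{eq:bal-ko}), this $A_\ep$ satisfies (\ref{eq:a-cond}) for small $\ep$. Indeed, $\sup|x^{\ep,i}_s|$ is bounded by a Gronwall estimate of order $e^{C\sfk_\ep}$, while (\ref{eq:bal-ko}) and (B1)(iii) keep this comparable to $\sfk_\ep$-powers up to the factor $\ep^{-\lS}$. I take the corresponding step of the proof of Theorem \ref{thm:cons} as given.

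The term $P[T(\ep,A_\ep)<T]$ does not involve $\nu_\ep$, so it is handled by Lemma \ref{lem:diff-bound} exactly as in the consistency proof. The bound is $\sfd_\ep\times(\text{Gaussian tail})$ with $\sfd_\ep=n_\ep\sfp_\ep$ polynomial in $\ep^{-1}$. The Gaussian tail decays like $\exp(-c\ep^{-2}\cdot\text{polylog})$, so the product tends to $0$.

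For the stochastic term, by (\ref{eq:xir-def}) and (\ref{eq:bal-ko}) we have
\[
\xi^\ep_r\gtrsim A_\ep^{-1}\ep^{-1+(\constv)/2+\lS}\nu_\ep .
\]
In case (ii), with $\nu_\ep=\nu\ep^\gamma$, the exponent of $\ep$ is $-1+(\constv)/2+\lS+\gamma<0$ by (\ref{eq:bc6}), and $A_\ep^{-1}$ is only polylogarithmic. Hence $\xi^\ep_r\ge\ep^{-\delta}$ for some $\delta>0$, and $2\constc\sfd_\ep e^{-(\xi^\ep_r)^2}\to0$ against the polynomial factor $\sfd_\ep$. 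Case (i) is similar: it uses (\ref{eq:bc3}), and the extra factor $1/\log(1/\ep)$ is absorbed into the strict polynomial margin.

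For the indicator term we need $\constr\ep^{(\constv)/2}\nu_\ep\ge (I^\ep(\constd)^{1/2})\vee(4\Delta^\ep_r)$ for small $\ep$. By Lemma \ref{lem:delta} and (\ref{eq:bal-ko}), $\Delta^\ep_r\lesssim\ep^{\alpha-\lS}\cdot\text{polylog}$. This is $o(\ep^{(\constv)/2+\gamma})$ by (\ref{eq:bc6}), since $\alpha>\lS+\gamma+(\constv)/2$. By (\ref{eq:i-def}), $I^\ep(\constd)^{1/2}\lesssim\ep^{\frac23\constd}\cdot\text{polylog}$, with the exponential factor bounded because $\ep^{\constd/3}\sfk_\ep\to0$. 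This is $o(\ep^{(\constd+\beta)/2+\gamma})$ iff $\frac23 c_1>\frac{c_1+\beta}{2}+\gamma$, i.e. $c_1>3(\beta+2\gamma)$, which is (\ref{eq:bc5}). In case (i), (\ref{eq:bc2}) gives a strict margin that beats the $\log$ factor. So the indicator is eventually zero.

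The main obstacle is $P[(\Omega^\ep_0(\nu_\ep))^c]$, since Lemma \ref{lem:omega-bound} gives a bound of order
\[
\epsilon^{-\beta}\nu_\ep^{-2}\sfk_\ep^2n^{-1/2},
\]
which deteriorates as $\nu_\ep\to0$. With $\nu_\ep=\nu\ep^\gamma$ and (\ref{eq:bal-n3}) this is $\ep^{-\beta-2\gamma}\cdot\ep^{2\gamma+\beta+\lN}\cdot\text{polylog}=\ep^{\lN}\,\text{polylog}\to0$. The mixing sum is bounded by \ref{as:mixing}, and I would need it bounded uniformly in $\ep$, interpreting \ref{as:mixing} that way. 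In case (i) the factor $\nu_\ep^{-2}=\nu^{-2}\log^2(1/\ep)$ is killed by $\ep^{\lN}$ from (\ref{eq:bal-n}). Summing the four vanishing terms gives both statements.
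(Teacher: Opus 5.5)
Your proposal is correct and is essentially the paper's own proof. You apply Theorem~\ref{thm:est-bound} with $\nu_\ep=\bar\nu/\log(1/\ep)$, resp.\ $\nu_\ep=\bar\nu\ep^{\gamma}$, and re-run the four term-by-term estimates from the proof of Theorem~\ref{thm:cons}: the strict inequalities of \ref{as:balance} absorb the logarithmic factor in case (i), and \eqref{eq:bc5}, \eqref{eq:bc6}, \eqref{eq:bal-n3} absorb the factor $\ep^{\gamma}$ in case (ii). The one soft spot is inherited from the paper rather than introduced by you: your ``indeed'' argument for \eqref{eq:a-cond} does not work, because the Gronwall bound only gives $\sfk_\ep\sup_{i,s}|x^{\ep,i}_s|\lesssim\sfk_\ep^{2}e^{C\sfk_\ep}$, which $A_\ep=\mathit{\Theta}(\sfk_\ep^{1+\lA})$ does not dominate once $\sfk_\ep\to\infty$, so that step is taken on trust exactly as in the proof of Theorem~\ref{thm:cons}.
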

The proof follows the same ideas as 
\autoref{thm:cons} and is postponed to \autoref{sec:proof-rates}.

\section{Adaptive Lasso estimation}\label{sec:lasso}

In this section, we work with the full directed candidate graph, that is, we associate one parameter $\theta_\ep^{ij}$ with each ordered pair $(i,j)\in[\sfp_\ep]^2$.
The true graph is then identified with the support of the true parameter. 
Specifically,  we assume that the data are generated 
by the model \eqref{eq:interactions-model}
\bea 
S^{\ep,i}(t,\bbX^{\ep,i}_t,\theta_\ep^*)
&=& 
\sum_{j\in N_i^\ep}
\theta^{*, ij}_\ep \psi^{ij}\big(X^{\ep,i}_t,X^{\ep,j}_t\big)
+\ep^\alpha\sum_{j\in\checkN_i^\ep}V^{ij}_\ep(t,X^{\ep,i}_t,X^{\ep, j}_t\big),
\quad i\in[\sfp_\ep]
\eea 
and we suppose  that the neighborhoods $ N_i^\ep,i\in[\sfp_\ep] $ in the true graph 
$G_\epsilon = (V_\epsilon, E_\epsilon)$ are unknown. 
Our aim is to use an adaptive Lasso procedure to recover the graph structure 
and estimate the interaction parameters. 
In order to do so we adopt the following parametric model
\bea 
S^{\ep,i}(t,\bbX^{\ep,i}_t,\theta_\ep)
&=& 
\sum_{j = 1}^{\sfp_\ep}
\theta^{ij}_\ep \psi^{ij}\big(X^{\ep,i}_t,X^{\ep,j}_t\big)
+\ep^\alpha\sum_{j\in\checkN_i^\ep}V^{ij}_\ep(t,X^{\ep,i}_t,X^{\ep, j}_t\big),
\quad i\in[\sfp_\ep],
\eea
that is $\theta^{ij}_\ep = 0 $ if and only if $(i,j) \notin E_{\epsilon}$. 
In this section, the model parameters are  $(\theta_\ep^{ij})_{i,j\in[\sfp_\ep]}$ and we denote by $\theta_\ep$ the vectorized form
\[
\theta_\ep = \mathsf{vec}(\theta_\ep^{ij},\, i,j\in[\sfp_\ep]) \in \mathbb{R}^{\pi_\ep}, \quad 
\pi_\ep = \sfp_\epsilon^2
\]
and we equip the parameter space with the metric $\rho_\ep$,  coinciding with the Euclidean norm on the vectorized parameter normalized by $\sqrt{\sfp_\ep}$, that is,
\[
\rho_\ep(\theta_\ep,\theta_\ep')
=
\frac{1}{\sqrt{\sfp_\ep}}|\!|\theta_\ep-\theta_\ep'|\!|, \qquad 
\theta_\ep,\theta_\ep' \in \Theta_\epsilon.
\]

In the following denote by $\tilde \theta_\epsilon^r$ the minimum $L^r$-distance estimator, and omit the superscript $r$ for ease of reading. 
Define an adaptive Lasso estimator

\begin{equation}\label{eq:lasso-est}
    \thL \in \arg \min_{\theta_\epsilon} |\!| \theta_\epsilon - \tilde \theta_\epsilon |\!|^2 + \lambda_\epsilon |\!| \theta_\epsilon |\!|_{1, w_\epsilon}
\end{equation}
where 
\[
    |\!| \theta_\epsilon |\!|_{1, w_\epsilon} = \sum_{i,j=1}^{\sfp_\ep} w_{\epsilon}^{ij}  |\theta^{ij}_\ep |
\]
and $w_\epsilon = \mathsf{vec}(w_{\epsilon}^{ij}; i,j \in [\sfp_\epsilon]) \in \bbR^{\pi_\epsilon}$ is a data-dependent weight vector. 


\begin{remark}
    The adaptive lasso estimator \eqref{eq:lasso-est} is a special case 
    of the adaptive $\ell_q$-penalized estimator considered in 
    \cite{suzuki2020penalized}, namely
    \begin{gather*}
        \hat \theta_\ep^{(q)} \in \arg\min_\theta \mathcal L^{(q)}_\ep (\theta)
        \\
      \mathcal L_\ep^{(q)}(\theta) = \langle\widehat{\mathcal I}_\ep, (\theta-\tilde{\theta}_\ep)^{\otimes 2}\rangle
        + \lambda_\epsilon |\!| \theta |\!|_{q,  w_\ep}^q
    \end{gather*}
    where $\langle A, B\rangle$ denotes the Frobenius inner product between matrices $A, B$, 
    $\widehat{\mathcal I}_\epsilon$ denotes a data-dependent information matrix and $|\!|\theta |\!|_{q, w}$ denotes the weighted $\ell_q$ metric 
    \[
     \left(|\!|\theta |\!|_{q,  w}\right) ^q = \sum_{j=1}^{\pi_\epsilon} w_{\epsilon}^j  |\theta^j|^q
    \]
    for a parameter and a weight vector $\theta, w \in \bbR^{\pi_\epsilon}$. 
    Namely, \eqref{eq:lasso-est} corresponds to the case $\widehat{\mathcal I}_\ep = I_{\pi_\ep}$ -- the $\pi_\ep\times \pi_\ep$ identity matrix; in \cite{suzuki2020penalized} this is also called a P-O estimator. 
    An extension to multiple penalties, in the general case where multiple rates of convergence are considered, has been studied in \cite{de2021regularized}, and practical implementations of these estimators are presented in \cite{de2024pathwise}. 
\end{remark}

Once $\hat \theta^L_\epsilon$ has been obtained, we can estimate the adjacency matrix of the graph as

\[
\hat {\mathcal A}_{\epsilon}^{ij}  = 
\begin{cases}
		\mathds 1(\hat \theta^{L,ij}_\epsilon \neq 0) &  \,i \neq j \\
		0 & i=j
	\end{cases}
	\qquad 1\leq i,j \leq \sfp_\epsilon.
\]
Define $\hat G_\epsilon = (V_\epsilon, \hat E_\epsilon)$ as the corresponding estimated graph.

We impose the following conditions linking the penalty level $\lambda_\ep$, the adaptive weights, and the convergence rate $r_\ep$.

\begin{enumerate}[label=(C\arabic*)]
    \item \label{as:lasso-active}
    
    Let
    \[
    \bar w_\ep = \max_{(i,j)\in E_\ep} w_\ep^{ij}.
    \]
    We assume that
    \[
    \lambda_\ep \frac{r_\ep^{-1}}{\sqrt{\sfp_\ep}}\,\bar w_\ep \|\theta_\ep^*\|_0 = O_p(1).
    \]

    \item \label{as:lasso-inactive}
    Let
    \[
    \check w_\ep = \min_{(i,j)\notin E_\ep} w_\ep^{ij}.
    \]
    We assume that
    \[
    \frac{r_\ep^{-1}\lambda_\ep \check w_\ep}{\sqrt{\sfp_\ep}} \to \infty.
    \]
\end{enumerate}


\begin{remark}
A standard choice of adaptive weights in the adaptive-Lasso literature
(\cite{zou2006adaptive,de2021regularized,suzuki2020penalized})
is
\[
w_\ep^{ij}=|\tilde\theta_\ep^{ij}|^{-\delta},
\qquad i,j\in[\sfp_\ep],
\]
for some $\delta>0$.
Since the support size satisfies $\|\theta_\ep^*\|_0 \le \sfp_\ep \sfk_\ep$,
a sufficient condition for Assumption \ref{as:lasso-active} is
\[
\lambda_\ep
=
O\!\left(
\frac{r_\ep}{\sqrt{\sfp_\ep}\,\sfk_\ep}
\right).
\]
On the other hand, 
a sufficient condition for Assumption \ref{as:lasso-inactive} is
\[
\lambda_\ep r_\ep^{-1-\delta}\,(\sqrt{\sfp_\ep})^{-1-\delta}\to\infty.
\]

\end{remark}

In order to guarantee support recovery, we assume the following condition on the true parameter vector, which is known in the Lasso literature as the \textit{beta-min} condition (see e.g. \cite{bunea2008honest}). We adapt it here to our setting, with a diverging dimension depending on $\epsilon$. 

\begin{enumerate}[label=(C\arabic*), resume]    \item \label{as:lasso-betamin}
    \textit{(Beta-min condition).}
    Assume that 
    \[
    \theta_{\min}
    \;:=\;
    \inf_{\epsilon >0} \min_{(i,j)\in E_\ep} |\theta_{\ep}^{*,ij}| > 0 
    \]
\end{enumerate}

The following Theorem guarantees consistency and selection consistency of the adaptive lasso estimator $\hat \theta^{L}_\epsilon$.

\begin{theorem}\label{thm:lasso}
    Suppose that Conditions \ref{as:drift-lin}, \ref{as:diff}, \ref{as:ident},  \ref{as:mixing} and \ref{as:lasso-active} are fulfilled.
    
    \begin{enumerate}[label=(\roman*)]
        \item (Consistency)
        Either assume  \ref{as:balance}  and set  $r_\epsilon^{-1} = \log(\epsilon^{-1})$, or assume \ref{as:balance2} and set 
        $r_\epsilon^{-1} = \epsilon^{-\gamma}$.
       
        Then the adaptive lasso estimator $\thL$
        defined in \eqref{eq:lasso-est}
        satisfies 
        \[
          \rho_\epsilon(\thL, \theta^{*}_\epsilon ) =
         O_p(r_\epsilon).
        \]
        
        \item (Support recovery)
        Assume  \ref{as:balance2} and set 
        $r_\epsilon^{-1} = \epsilon^{-\gamma}$, with $\lG < 2 \gamma \wedge \lN / 2$, and further assume \ref{as:lasso-inactive}  and \ref{as:lasso-betamin}. Then
        \[
            P(\hat E_\epsilon = E_\epsilon) \to 1
        \]
        as $\epsilon \to 0$.
    \end{enumerate}

\end{theorem}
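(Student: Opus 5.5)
The plan exploits that the objective in \eqref{eq:lasso-est} is separable, with identity weighting. Each coordinate solves a scalar problem $\min_t (t-\tilde\theta^{ij}_\ep)^2+\lambda_\ep w^{ij}_\ep|t|$. Its solution is the soft-thresholding
\[
\thLi=\mathrm{sign}(\tilde\theta^{ij}_\ep)\big(|\tilde\theta^{ij}_\ep|-\tfrac12\lambda_\ep w^{ij}_\ep\big)_+ .
\]
Everything then follows from this closed form together with the rate of the preliminary estimator from Theorem \ref{thm:rates}. In case (i) that theorem gives $\rho_\ep(\tilde\theta_\ep,\theta^*_\ep)=o_p(r_\ep)$ with $r_\ep^{-1}=\log(1/\ep)$ under \ref{as:balance}. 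In case (ii) it gives the same with $r_\ep^{-1}=\ep^{-\gamma}$ under \ref{as:balance2}. Equivalently, $\|\tilde\theta_\ep-\theta^*_\ep\|=o_p(\sqrt{\sfp_\ep}\,r_\ep)$.

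\emph{Consistency.} I would not use the closed form here but rather the basic inequality comparing the objective at $\thL$ and at $\theta^*_\ep$:
\[
\|\thL-\tilde\theta_\ep\|^2+\lambda_\ep\|\thL\|_{1,w_\ep}\le\|\theta^*_\ep-\tilde\theta_\ep\|^2+\lambda_\ep\|\theta^*_\ep\|_{1,w_\ep}.
\]
Writing $u=\thL-\theta^*_\ep$ and expanding $\|u+\theta^*_\ep-\tilde\theta_\ep\|^2$ gives $\|u\|^2\le 2\|u\|\,\|\tilde\theta_\ep-\theta^*_\ep\|+\lambda_\ep\sum_{(i,j)\in E_\ep}w^{ij}_\ep(|\theta^{*,ij}_\ep|-|\thLi|)$. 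The inactive terms were dropped because they enter with a nonnegative sign on the left.

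The penalty difference is bounded by $\lambda_\ep\bar w_\ep\sum_{E_\ep}|u^{ij}|$. Since $|u^{ij}|\le|\tilde\theta^{ij}_\ep-\theta^{*,ij}_\ep|+\frac12\lambda_\ep w^{ij}_\ep$ coordinatewise by soft-thresholding, I would aim for the cleaner bound $\lambda_\ep\bar w_\ep\|u\|_{\infty}\|\theta^*_\ep\|_0$. Dividing by $\sqrt{\sfp_\ep}$ and solving the quadratic inequality in $\|u\|/\sqrt{\sfp_\ep}$ then yields
\[
\rho_\ep(\thL,\theta^*_\ep)\lesssim\rho_\ep(\tilde\theta_\ep,\theta^*_\ep)+\big(\lambda_\ep\bar w_\ep\|\theta^*_\ep\|_0/\sqrt{\sfp_\ep}\big)\cdot(\text{bounded}).
\]
By \ref{as:lasso-active} the second term is $O_p(r_\ep)$. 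This is where the normalization in \ref{as:lasso-active} should match, and it is the step where some care is needed in matching powers of $r_\ep$; the boundedness of $\Theta_\ep$ from \ref{as:drift-lin}(iii) controls $\|u\|_\infty$.

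\emph{Support recovery.} This requires two events of probability tending to one.

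(a) \emph{No false positives.} For $(i,j)\notin E_\ep$, $\thLi=0$ iff $|\tilde\theta^{ij}_\ep|\le\frac12\lambda_\ep\check w_\ep$. We have $\max_{ij}|\tilde\theta^{ij}_\ep-\theta^{*,ij}_\ep|\le\|\tilde\theta_\ep-\theta^*_\ep\|=o_p(\sqrt{\sfp_\ep}r_\ep)$, while \ref{as:lasso-inactive} says $\lambda_\ep\check w_\ep/(\sqrt{\sfp_\ep}r_\ep)\to\infty$. So the event holds with probability tending to one.

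(b) \emph{No false negatives.} For $(i,j)\in E_\ep$ we need $|\tilde\theta^{ij}_\ep|>\frac12\lambda_\ep w^{ij}_\ep$. By \ref{as:lasso-betamin}, $|\tilde\theta^{ij}_\ep|\ge\theta_{\min}-\|\tilde\theta_\ep-\theta^*_\ep\|$. Here the condition $\lG<2\gamma$ enters: with $\sfp_\ep=\mathcal O(\ep^{-\lG})$ we get $\sqrt{\sfp_\ep}r_\ep=\mathcal O(\ep^{\gamma-\lG/2})\to0$, so the Euclidean (hence sup) error vanishes. On the other side, \ref{as:lasso-active} gives $\lambda_\ep\bar w_\ep=O_p(\sqrt{\sfp_\ep}r_\ep/\|\theta^*_\ep\|_0)=o_p(1)$. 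Hence $|\tilde\theta^{ij}_\ep|\ge\theta_{\min}/2>\frac12\lambda_\ep\bar w_\ep$ uniformly over $E_\ep$ with probability tending to one. The condition $\lG<\lN/2$ should be used only if a uniform-in-coordinates bound sharper than the $\ell_2$ bound is needed; I expect the crude $\|\cdot\|_\infty\le\|\cdot\|$ bound to suffice.

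Combining (a) and (b), and noting the diagonal is set to $0$ in $\hat{\mathcal A}_\ep$ by definition, gives $P(\hat E_\ep=E_\ep)\to1$. The main obstacle is the consistency step: turning the weighted penalty difference into an $O_p(r_\ep)$ term with exactly the normalization of \ref{as:lasso-active}. If the quadratic-inequality route loses a factor, I would instead bound $\|u\|$ coordinatewise through the soft-thresholding formula, which gives $|u^{ij}|\le|\tilde\theta^{ij}_\ep-\theta^{*,ij}_\ep|+\frac12\lambda_\ep w^{ij}_\ep$ on $E_\ep$ and $|u^{ij}|\le|\tilde\theta^{ij}_\ep-\theta^{*,ij}_\ep|$ off $E_\ep$, and then sum.
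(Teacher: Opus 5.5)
Your proposal is correct once part (i) is closed with $\|u\|_\infty\le\|u\|$ or with your fallback, not with boundedness of $\Theta_\ep$. Part (i) is essentially the paper's argument; part (ii) takes a genuinely different and cleaner route.

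\textbf{Part (i).} The paper uses exactly your basic inequality. It bounds the active penalty difference by $\lambda_\ep\bar w_\ep\|\theta^*_\ep\|_0\|u\|$, i.e.\ it uses $\|u\|_\infty\le\|u\|$, and then cancels one factor $\|u\|$. This gives $\rho_\ep(\thL,\theta^*_\ep)\le 2\rho_\ep(\tilde\theta_\ep,\theta^*_\ep)+\lambda_\ep\bar w_\ep\|\theta^*_\ep\|_0/\sqrt{\sfp_\ep}$, which is your display with the ``bounded'' factor equal to $1$.

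Do not control $\|u\|_\infty$ through the boundedness of $\Theta_\ep$. That turns the penalty term into a constant, and the quadratic inequality then only yields an extra term of order $(\lambda_\ep\bar w_\ep\|\theta^*_\ep\|_0/\sfp_\ep)^{1/2}=O_p\big((r_\ep/\sqrt{\sfp_\ep})^{1/2}\big)$. This is not $O_p(r_\ep)$ once $r_\ep^{-1}/\sqrt{\sfp_\ep}\to\infty$, which is exactly what $\lG<2\gamma$ forces in (ii).

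Your fallback is fine and slightly sharper. Minkowski applied to the coordinatewise soft-thresholding bounds gives $\|u\|\le\|\tilde\theta_\ep-\theta^*_\ep\|+\frac12\lambda_\ep\bar w_\ep\|\theta^*_\ep\|_0^{1/2}$.

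\textbf{Part (ii).} Here the routes differ.

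The paper works as follows:
\begin{itemize}
\item It uses the KKT condition for $\thL$ on the event $\thL\notin\partial\Theta_\ep$, and passes through part (i) and the triangle inequality.
\item It takes union bounds over the at most $\sfp_\ep^2$ inactive and $\sfp_\ep\sfk_\ep$ active coordinates, bounding each term by the quantitative rate \eqref{eq:err-bound-O}. This is where $\lG<\lN/2$ enters.
\item It carries a term $P(\thL\in\partial\Theta_\ep)$ that is never estimated.
\end{itemize}

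Your soft-thresholding formula reduces both inclusions to comparing $\max_{ij}|\tilde\theta^{ij}_\ep-\theta^{*,ij}_\ep|\le\|\tilde\theta_\ep-\theta^*_\ep\|$ with the thresholds $\frac12\lambda_\ep w^{ij}_\ep$. A single coordinate-free event therefore controls all coordinates. You only use:
\begin{itemize}
\item $\rho_\ep(\tilde\theta_\ep,\theta^*_\ep)=o_p(r_\ep)$;
\item \ref{as:lasso-inactive} and \ref{as:lasso-betamin};
\item $\lambda_\ep\bar w_\ep=o_p(1)$, from \ref{as:lasso-active};
\item $\lG<2\gamma$.
\end{itemize}
You are right that $\lG<\lN/2$ is superfluous. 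In fact each per-coordinate event in the paper is already contained in one event not depending on $(i,j)$, so its union bounds were never needed.

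Your route has a cost. The closed form requires reading \eqref{eq:lasso-est} as an unconstrained minimization, or one over a box containing $0$ and $\tilde\theta_\ep$, with identity weighting. The KKT argument is the one that extends to a general constraint set or to a non-identity $\widehat{\mathcal I}_\ep$ as in the P-O estimators of the remark. That extension needs control of the boundary event, which the paper does not supply.

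A small slip: for $(i,j)\notin E_\ep$, $\thLi=0$ iff $|\tilde\theta^{ij}_\ep|\le\frac12\lambda_\ep w^{ij}_\ep$. The version with $\check w_\ep$ is only a sufficient condition, though that is all you need.
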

\noindent
The proof is given in \autoref{sec:proof-lasso}. 
We follow similar steps as in \cite[Theorem 1, Theorem 2]{suzuki2020penalized}. The main novelty is to take into account the diverging dimensionality of the parameter space. 
\begin{remark}
    The additional requirement $\lG < 2 \gamma \wedge \lN / 2$ relates the growth of the dimension to the rate $r_\ep$, namely it entails $\frac{r_\epsilon^{-1}}{\sqrt{\sfp_\epsilon}} \to \infty$. 
    This, combined with \ref{as:lasso-betamin}, allows us to achieve support recovery. 
\end{remark}

\section{Proofs}\label{sec:proofs}

\subsection{Proof of \autoref{thm:err-bound}}
\label{sec:proof-general-err}

For $r\in[1,\infty)$, 
\bea\label{202508070652}&&
P\big[\rho_\ep(\wh{\theta}_\ep^r,\theta_\ep^*)\geq\nu,\>T(\ep,A)=T\mid\calf_0\big]
\nn\\&\leq&
P\bigg[\inf_{\theta_\ep\in U_\ep(\nu)^c}\calh^r_\ep(\theta_\ep)\leq\calh^r_\ep(\theta_\ep^*),\>T(\ep,A)=T\big|\calf_0\bigg]
\nn\\&\leq&
P\bigg[
\sup_{t\in[0,T]}\bigg\{(\sfd_\ep)^{-1}\sum_{i\in[\sfd_\ep]}\big|X^{\ep,i}_t-z^{\ep,i}_t(\theta_\ep^*)\big|^r\bigg\}^{1/r}
\nn\\&&\hspace{20pt}
\geq 2^{-1}\inf_{\theta_\ep\in U_\ep(\nu)^c}
\sup_{t\in[0,T]}\bigg\{(\sfd_\ep)^{-1}\sum_{i\in[\sfd_\ep]}\big|z^{\ep,i}_t(\theta_\ep)-z^{\ep,i}_t(\theta_\ep^*)\big|^r\bigg\}^{1/r},\>T(\ep,A)=T\big|\calf_0\bigg]
\nn\\&\leq&
P\big[\wt{m}_r^\ep(T(\ep,A))^{1/r}\geq2^{-1}g^\ep_r(\nu)^{1/r}\big|\calf_0\big]
\nn\\&\leq&
P\big[m_r^\ep(T(\ep,A))^{1/r}\geq2^{-1}g^\ep_r(\nu)^{1/r}-\Delta_r^\ep\big|\calf_0\big]
\nn\\&\leq^{(\ref{202508070505})}&
P\bigg[2^{1-1/r}R^\ep_r(T(\ep,A))^{1/r}\exp\bigg(2^{r-1}r^{-1}{\consta}^rT^r(\max_{i\in[\sfd_\ep]}\sfk^\ep_i)^{r-1}{\sf o}_\ep\bigg)\geq2^{-1}g^\ep_r(\nu)^{1/r}-\Delta_r^\ep\big|\calf_0\bigg]
\nn\\&\leq&
P\bigg[R^\ep_\infty(T(\ep,A))\geq2^{-2+1/r}(g^\ep_r(\nu)^{1/r}-2\Delta_r^\ep)\exp\bigg(-2^{r-1}r^{-1}{\consta}^rT^r(\max_{i\in[\sfd_\ep]}\sfk^\ep_i)^{r-1}{\sf o}_\ep\bigg)\big| \calf_0^\ep\bigg]
\nn\\&&
\eea
since $R^\ep_r(t)^{1/r}\leq R^\ep_\infty(t)$. 
Then, from (\ref{202508070652}), (\ref{202508070650}) and (\ref{202508070607}), we conclude that 
\bea\label{202508070654}
P\big[\rho_\ep(\wh{\theta}_\ep,\theta_\ep^*)\geq\nu,\>T(\ep,A)=T\big|\calf_0\big]
&\leq&
\big\{2\constc\sfd_\ep \exp\big(-\Xi_r^\ep(\nu,A)^2\big)\big\}\wedge1. 
\eea
Here we used the estimate
\beas 
1\wedge(\constc\sfd_\ep y^{-1}e^{-y^2})
\yleq
(\constc\sfd_\ep) \wedge(\constc\sfd_\ep y^{-1}e^{-y^2})
\yleq
2\constc\sfd_\ep e^{-y^2}\quad(y>0)
\eeas
by
$C_0>1$, $\sfd_\ep\geq1$, and 
$1\wedge\{y^{-1}e^{-y^2}\}\leq2e^{-y^2}$ for all $y>0$.

Similarly, in Case $r=\infty$, we obtain
\bea\label{202508070740}&&
P\big[\rho_\ep(\wh{\theta}_\ep^\infty,\theta_\ep^*)\geq\nu,\>T(\ep,A)=T\big|\calf_0\big]
\nn\\&\leq&
P\big[\wt{m}_\infty^\ep(T)\geq2^{-1}g^\ep_\infty(\nu),\>T(\ep,A)=T\big|\calf_0\big]
\nn\\&\leq&
P\big[m_\infty^\ep(T)\geq2^{-1}g^\ep_\infty(\nu)-\Delta_\infty^\ep,\>T(\ep,A)=T\big|\calf_0\big]
\nn\\&\leq^{(\ref{202508070516})}&
P\bigg[R^\ep_\infty(T(\ep,A))\geq2^{-1}(g^\ep_\infty(\nu)-2\Delta^\ep_\infty)\exp\bigg(-T\consta\max_{i\in[\sfd_\ep]}\sfk^\ep_i\bigg)\big|\calf_0\bigg]
\nn\\&\leq&
\big\{2\constc\sfd_\ep\exp\big(-\Xi_\infty^\ep(\nu,A)^2\big)\big\}\wedge1. 
\eea

Now Inequality (\ref{202508140415}) follows from (\ref{202508070740}) and (\ref{202508070654}).

\subsection{Proof of \autoref{thm:est-bound}}\label{sec:proof-est-bound}

The 
 proof is based on an application of Theorem \ref{thm:err-bound}. Our goal is to show that, by choosing 
 $\xi = \xi^\epsilon_r$, and under the conditions in 
 \eqref{202508161946}, the following inclusion holds:
 \[
  \Omega^\ep_0(\nu) \subset (\Xi_r^\ep(\nu,A)
 \geq   \xi_r^\epsilon ) .
 \]
\noindent
For ease of reading we write $A$, $\nu$ and omit the dependence on $\epsilon$.

Recall that  
\beas
\varphi_\ep
&=& 
\sup_{\theta_\ep\in\Theta_\ep}\big|Q^\ep(\theta_\ep)-E\big[Q^\ep(\theta_\ep)\big]\big|.
\eeas

Moreover, under \ref{as:ident}, 
for $G^\ep(\theta_\ep,\constd)$
defined in \eqref{eq:i-def}
we have
\bea\label{202508210610}
E\big[G^\ep(\theta_\ep,\constd)\big]
&=&
3^{-1}T^{-1}\ep^\constd
(\sfd_\ep)^{-1}\sum_{i\in[\sfd_\ep]}
E\bigg[ E_{\calb^\ep_i}\bigg[
\big|\sum_{j\in\checkN_i^\ep}(\theta^{ij}-\theta^{ij*}) \psi^{ij}\big(x^{\ep,i}_0,x^{\ep,j}_0\big)\big|^2\bigg]\bigg]
\nn\\&\geq&
3^{-1}T^{-1}\ep^\constd
(\sfd_\ep)^{-1}\sum_{i\in[\sfd_\ep]}
E\bigg[ E_{\calb^\ep_i}\bigg[
\big|\sum_{j\in\checkN_i^\ep}(\theta^{ij}-\theta^{ij*}) \wt{\psi}^{ij}\big|^2\bigg]\bigg]
\nn\\&\geq&
3^{-1}\constk T^{-1}\ep^{\constv}
(\sfd_\ep)^{-1}\sum_{i\in[\sfd_\ep]}
\sum_{j\in\checkN_i^\ep}|\theta^{ij}-\theta^{ij*}|^2%
\nn\\&=&
3^{-1}\constk T^{-1}\ep^{\constv}
(n \sfp_\ep)^{-1} n \sum_{i\in[\sfp_\ep]}
\sum_{j\in\checkN_i^\ep}|\theta^{ij}-\theta^{ij*}|^2 
\nn\\&=&
3^{-1}\constk T^{-1}\ep^\constv\rho_\ep(\theta_\ep,\theta_\ep^*)^2. 
\eea

Let $r\in[2,\infty)$ and $\constd>0$. 
Now, on the event 
\beas
\Omega^\ep_0(\nu)=\big\{\varphi_\ep\leq2^{-1}\inf_{\theta_\ep\in U_\ep(\nu)^c}E\big[Q^\ep(\theta_\ep)\big]\big\}, 
\eeas
by Lemma \ref{lem:ident-gap-1} and \eqref{202508210610-2}, 
\beas
g_\infty^\ep(\nu)^2
&\geq&
g_r^\ep(\nu)^{2/r}
\ygeq
g_2^\ep(\nu)
\ygeq
\ep^\constd\inf_{\theta_\ep\in U_\ep(\nu)^c}Q^\ep(\theta_\ep)-I^\ep(\constd)
\nn\\&\geq&
\ep^\constd\big(\inf_{\theta_\ep\in U_\ep(\nu)^c}E[Q^\ep(\theta_\ep)]-\varphi_\ep\big)-I^\ep(\constd)
\nn\\&\geq&
2^{-1}\ep^\constd\inf_{\theta_\ep\in U_\ep(\nu)^c}E[Q^\ep(\theta_\ep)]-I^\ep(\constd)
\nn\\&=&
2^{-1}\inf_{\theta_\ep\in U_\ep(\nu)^c}E[G^\ep(\theta_\ep,\constd)]-I^\ep(\constd)
\nn\\&\geq&
2\constr^2\ep^\constv\nu^2-I^\ep(\constd)
\eeas
for $\ep\in(0,T^{3/\constd}\wedge1]$ and $\nu>0$, 
where $\constr=(2{\sqrt{3}})^{-1}\constk^{1/2} T^{-1/2}$. 
Then, 
\bea\label{202508161945}
g_\infty^\ep(\nu)-2\Delta^\ep_r
\ygeq
g_r^\ep(\nu)^{1/r}-2\Delta^\ep_r
\ygeq
\constr\ep^{(\constv)/2}\nu-2\Delta^\ep_r
\ygeq
2^{-1}\constr\ep^{(\constv)/2}\nu
\eea
on the event $\Omega^\ep_0(\nu)$, if 
\bea\label{202508161946}
\constr^2\ep^\constv\nu^2\ygeq I^\ep(\constd)\quad\text{and}\quad
\constr\ep^{(\constv)/2}\nu\geq4\Delta^\ep_r
\eea
since
\beas
g_\infty^\ep(\nu)^2
\ygeq
g_r^\ep(\nu)^{2/r}
\ygeq
g_2^\ep(\nu)
\ygeq
\constr^2\ep^\constv\nu^2. 
\eeas
Under \ref{as:drift-lin}, Condition \ref{as:lip} is satisfied with 
\bea\label{202508220530}
\consta &=&\consti(\constj\vee1). 
\eea
For $r\in[1,\infty)$, it follows from (\ref{202508070650}), 
\bea\label{202508161947}
\Xi_r^\ep(\nu,A)
&\geq&
\constr\constn A^{-1}\ep^{-1+(\constv)/2}\nu\exp\bigg(-\consto(\max_{i\in[\sfp_\ep]}\sfk^\ep_i)^{r-1}{\sf o}_\ep\bigg) =: \xi_r^\epsilon
\eea
on the event $\Omega^\ep_0(\nu)$, if the inequalities in (\ref{202508161946}) hold. 
For $r=\infty$, we have from (\ref{202508070739}) that 
\bea
\Xi_\infty^\ep(\nu,A)
&\geq& 
2^{-1}\constr\constp A^{-1}\ep^{-1+(\constv)/2}\nu\exp\bigg(-\constq\max_{i\in[\sfp_\ep]}\sfk^\ep_i\bigg) =: \xi_\infty^\epsilon. 
\eea

\subsection{Proof of \autoref{thm:cons}}\label{sec:proof-cons}

We proceed by analyzing each term on the right-hand side 
of \autoref{thm:est-bound}. 

\paragraph{Convergence of $P\big[T(\ep,A_\ep)<T\big]$.}

%
Recall from Lemma \ref{lem:diff-bound} that 
\beas 
P\big[T(\ep,A_\ep)<T\big]
&\leq&
\sfd_\ep\times
2\pi^{-1/2}\frac{\sqrt{2T A_\ep^2\ep^2}}{x}\exp\bigg(-\frac{x^2}{2TA_\ep^2\ep^2}\bigg)
\eeas
whenever 
\bea \label{eq:a-bound2}
A_\ep>2\constz\bigg[1
+(\max_{i\in[\sfd_\ep]}\sfk^\ep_i)\sup_{i\in[\sfd_\ep]}\sup_{s\in[0,T]}\big|x^{\ep,i}_s\big|\bigg], 
\eea
where
\beas 
x &=& 
2^{-1}\constz^{-1}A_\ep(\max_{i\in[\sfd_\ep]}\sfk^\ep_i)^{-1}\exp\bigg(-\consta T\max_{i\in[\sfd_\ep]}\sfk^\ep_i\bigg) .
\eeas
Now, recall that $\consto = 2^{r-1} r^{-1} C_1^r T^r$. Notice that, for $\sfk_\ep \geq 1,  {\sf o}_\ep \geq 1, r>2 $, 
\[
C_1 T \sfk_\ep \leq  (2^{r-1} r^{-1} C_1^r T^r \vee 1)  \sfk_\ep^{r-1}  {\sf o}_\ep.
\]
Therefore, the balance condition \eqref{eq:bal-ko} entails
\footnote{We write $f(\epsilon) \gtrsim g(\epsilon)$ if there exists $C>0$ such that $f(\epsilon) \geq  C g(\epsilon)$. With some abuse of notation we use the same symbol when the inequality holds up to logarithmic factors of the form $\log^c( \epsilon^{-1}), \, c>0$ on the right-hand side, when they are inessential for the asymptotic behavior.}
\bea \label{eq:exp1-bound}
\exp\left (-C_1 T \sfk_\ep \right ) \geq \exp \left ( -(\consto \vee 1)\sfk_\ep^{r-1}  {\sf o}_\ep   \right  )
 \gtrsim  \ep^{\lS}
\eea
and, with $x$ defined above, 
\[
-\frac{x^2}{2TA_\ep^2\ep^2} \lesssim -\frac{1}{8 T \constz} \frac{1}{\ep^2 \sfk_\ep^2} \ep^{2\lS} \lesssim -\left(\frac{1}{\ep}\right)^{2(1 - \lS)}
\]
where the last inequality neglects logarithmic factors. 
Recall that $\sfd_\ep = n_\ep \sfp_\ep$, that 
$n_\ep = \mathit{\Theta}(\epsilon^{-2(\beta + \lN)})$
according to condition  \eqref{eq:bal-n}, 
and that $\sfk_\ep$ has logarithmic growth, 
for instance as in 
\eqref{eq:bal-ko2}. Then, by using again \eqref{eq:exp1-bound},  we have
\begin{align} \label{eq:te-bound}
P\big[T(\ep,A_\ep)<T\big] & 
\lesssim
\sqrt T 
\sfd_\ep \,  \ep \, \sfk_\ep \exp \left( T C_1 \sfk_\ep \right) 
\exp \left( -\ep^{- 2(1 -  \lS)} \right)
\notag \\
& \lesssim
\ep^{1 - \lG - \lS - 2(\beta + \lN)} \exp \left\{-\left(\frac{1}{\ep}\right)^{2(1 - \lS)}\right\}
\end{align}
 up to logarithmic terms. Exponential  convergence of the last term to zero is guaranteed if $\lS < 1$, which is ensured by \eqref{eq:bc3},
 provided that  \eqref{eq:a-bound2} holds. 
This is in fact the case  under a mild growth condition for $A_\ep$: for instance it is enough to assume \eqref{eq:A-bc}. 

\paragraph{Convergence of $\sfd_\ep e^{-(\xi^\ep_r)^2}$.}

We show that $\xi^\ep_r \to \infty$ quasi-polynomially fast in $\ep$. Then the conclusion follows due to the polynomial growth (up to logarithmic factors) of $\sfd_\ep = n_\ep \sfp_\ep$ in Assumption \ref{as:balance}. From \eqref{eq:xir-def}

\begin{align*}
    \xi^\ep_r & = 
\constr\constn A_\ep^{-1}\ep^{-1+(\constv)/2}\nu \exp\bigg(-\consto(\max_{i\in[\sfp_\ep]}\sfk^\ep_i)^{r-1}{\sf o}_\ep\bigg)
\\
& \gtrsim \left( \log \frac 1 \ep \right)^{-\frac{1 + \lA}{2(r-1)}} \ep^{-1+(\constv)/2} \epsilon^{\lS},
\end{align*}
which diverges under the balance conditions in Assumption \ref{as:balance}.

\paragraph{Convergence of $\mathds 1_{\big\{
\constr\ep^{(\constv)/2}\nu_\ep<(I^\ep(\constd)^{1/2})\vee(4\Delta^\ep_r)\big\}}$.}

We start by analyzing $I^\ep(\constd)$, which is defined in \eqref{eq:i-def} as
\[
I^\ep(\constd)
\yeq
\constl
(\max_{i}\sfk^\ep_i)^3\ep^{\frac{4}{3}\constd}
\exp\bigg(\constm\ep^{\constd/3} \sfk_\ep\bigg).
\]
From condition \eqref{eq:bal-ko2}, $\ep^{\constd/3} \sfk_\ep \to 0$, and therefore for $\ep$ small enough, 
$
I^\ep(\constd) \lesssim \ep^{\frac{4}{3} \constd}
$,
neglecting logarithmic factors. This means that the asymptotic behavior of the inequality
\begin{equation}\label{eq:step3-1}
    \constr^2\ep^{\constv}\nu^2 \geq  I^\ep(\constd)
\end{equation}
is  determined by $\ep^{4\constd/3 } / \ep^{\constv}$, which tends to zero if \eqref{eq:bc2}, namely $c_1 >
3 \beta$, holds true. 
Recall that, by Lemma \ref{lem:delta},
\bea\label{eq:delta-inf-bound-app}
    \Delta^\ep_r \leq \Delta^\ep_\infty
     &\leq&
    \consti\ep^\alpha\sfk_\ep T
    \bigg\{1+ \constj
    +
    T\consti \sfk_\ep\bigg\}
    \exp\big(T(\consti+\consti\constj) \sfk_\ep\big). 
    \eea

Then, by \eqref{eq:delta-inf-bound-app}, and by the conditions in Assumption \ref{as:balance}
\begin{align*}
    \Delta^\ep_\infty &\leq 
    \consti\ep^\alpha\sfk_\ep T
    \bigg\{1+ \constj
    +
    T\consti \sfk_\ep\bigg\}
    \exp\big(T(\consti+\consti\constj) \sfk_\ep\big) \\
    & \leq 
    \consti\ep^\alpha T \sfk_\ep^2
    \bigg\{1+ \constj + T\consti \bigg\}
    \exp\big(\consto \sfk_\ep^{r-1} {\sf o}_\ep \big) 
    \\
    & \lesssim
    \ep^{\alpha - \lS}
\end{align*}
where we neglect logarithmic factors in the domination. Finally, the 
inequality $\constr\ep^{(\constv)/2}\nu \geq4\Delta^\ep_\infty$ holds eventually for small $\ep$  
if $\ep^{\alpha - \lS} \cdot  \ep^{-\frac{\constv}{2}} \to 0$, which holds true under 
\eqref{eq:bc3}, namely if $\alpha - \lS > (\constv)/2$.

\paragraph{Convergence of $P\big[(\Omega^\ep_0(\nu_\ep))^c\big]$.}
The convergence $P\big[(\Omega^\ep_0(\nu_\ep))^c\big]\to0$ follows from 
Lemma \ref{lem:omega-bound}, which gives
\beas
P\big[(\Omega^\ep_0(\nu))^c\big]
&\leq&
{16\sqrt{2}}(\constj)^2\constu\constk^{-1}\>
\bigg\{\sum_{h=0}^{n-1}(1-n^{-1}h)\alpha^\ep(h)\bigg\}^{1/2}
\epsilon^{-\beta}
\nu^{-2}
(\max_{i\in[\sfp_\ep]}\sfk^\ep_i)^2\>
n^{-1/2}.
\eeas
Set $n = n_\ep$. Under Assumption \ref{as:mixing} 
we have that
\[
P\big[(\Omega^\ep_0(\nu))^c\big] \lesssim \frac{\sfk_\ep^2}{\epsilon^{\beta} \sqrt{n_\ep}} \to 0
\]
as $\ep \to 0$, due to \eqref{eq:bal-n}.

\begin{remark}\rm 
A large deviation estimate for $P\big[\Omega^0_\ep(\nu)^c\big]$ is possible. 
It would give a sharper bound than (\ref{202508210651}). 
\end{remark}

\subsection{Proof of \autoref{thm:lasso}}
\label{sec:proof-lasso}
 
        To prove $(i)$ we show that 
        \[
        r_\epsilon^{-1} \rho_\epsilon(\hat \theta^L_\epsilon, \theta^*_\epsilon )
        = 
        \frac{r_\epsilon^{-1} }{\sqrt{\sfp_\ep}}
            |\!|\hat \theta^L_\epsilon - \theta^*_\epsilon |\!|
            \leq  O_p(1). 
        \]
        
        Let 
        \[
        \mathcal L_\epsilon(\theta_\epsilon) = |\!| \theta_\epsilon - \tilde \theta_\epsilon |\!|^2 + \lambda_\epsilon |\!| \theta |\!|_{1, w_\epsilon}. 
        \]
        Then 
        \begin{align*}
            0 & \geq 
             \mathcal L_\epsilon(\hat \theta^L_\epsilon) - 
             \mathcal L_\epsilon(\theta^*_\epsilon)
            \\ &
            =
            |\!| \hat \theta^L_\epsilon - \theta^*_\epsilon |\!|^2 
            -2 
            (\hat \theta^L_\epsilon - \theta^*_\epsilon)^\top
            (\tilde  \theta_\epsilon - \theta^*_\epsilon)
            + \lambda_\epsilon
            \sum_{ij} w_{\epsilon}^{ij} 
                (|\hat \theta^{L, ij}_\epsilon | - | \theta^{*, ij}_\epsilon |)
            \\ & \geq 
             |\!| \hat \theta^L_\epsilon - \theta^*_\epsilon |\!|^2 
            -2 
            |\!|\hat \theta^L_\epsilon - \theta^*_\epsilon |\!|
            |\!|\tilde  \theta_\epsilon - \theta^*_\epsilon|\!|
            - \lambda_\epsilon
            \bar w_\epsilon |\!| \theta^*_\epsilon |\!|_0 \, 
            |\!| \hat \theta^L_\epsilon - \theta^*_\epsilon |\!|.
        \end{align*}
        Hence 
        \begin{align}\label{eq:theta-lasso-bound}
            \frac{r_\epsilon^{-1} }{\sqrt{\sfp_\ep}}
            |\!|\hat \theta^L_\epsilon - \theta^*_\epsilon |\!|
            \leq 
            2 r_\epsilon^{-1}
            \underbrace{
            \frac{1 }{\sqrt{\sfp_\ep}}
            |\!|\tilde  \theta_\epsilon - \theta^*_\epsilon|\!|}_
            {\rho_\epsilon(\tilde \theta_\epsilon, \theta_\epsilon^*)}
            + 
            \lambda_\epsilon
            \frac{r_\epsilon^{-1} }{\sqrt{\sfp_\ep}} \bar w_\epsilon |\!| \theta^*_\epsilon |\!|_0 = O_p(1)
        \end{align}
        due to \autoref{thm:rates} and the assumptions. 

    \textit{Proof of (ii)}. 
    \textit{Step 1.}
    We first show that 
    $(\hat E_\epsilon \subset E_\epsilon)$ 
    with probability tending to 1. 
    Since we place an edge
    whenever $\thL$ is not zero, 
    $(\thLi = 0,  \;\; \forall  (i, j) \notin E_\epsilon) \subset (\hat E_\epsilon \subset E_\epsilon)$. Then it suffices to show that
	\[
	P(\thLi \neq 0 \;\; \text{for some } (i,j) \notin E_\epsilon) \to 0.
	\]
    Fix some $ (i,j) \notin E_\epsilon$ and suppose $\thLi \neq 0 $ and $ \thL \notin \partial \Theta_\epsilon$ .
    From inspection
    of the 
    Karush–Kuhn–Tucker (KKT) 
    optimality conditions we have
    \begin{align*}
        \frac{\partial}{\partial \theta_{\epsilon, ij}}
        \mathcal{\mathcal L}_\epsilon(\theta_\epsilon) \Bigg \vert _{\theta_\epsilon=\thL} =
         2 (\thLi-\tilde{\theta}_{\epsilon, ij}) + \lambda_\epsilon w_{\epsilon, ij} \, \mathrm{sgn}(\thLi) = 0
    \end{align*}
    Then 
    \begin{equation*}
       r_\epsilon^{-1}\rho_\epsilon(\thL, \tilde{\theta}_{\epsilon}) \geq  \frac{r_\epsilon^{-1}}{\sqrt{\sfp_\epsilon}}|\thLi-\tilde{\theta}_{\epsilon, ij}| 
       = 
       \frac {r_\epsilon^{-1}}{2\sqrt{\sfp_\epsilon}} \lambda_\epsilon w_{\epsilon, ij} \geq \frac {r_\epsilon^{-1}}{2\sqrt{\sfp_\epsilon}} \lambda_\epsilon  \check w_{\epsilon}. 
    \end{equation*}
    Therefore, due to \autoref{thm:rates}, part \textit{(i)}  and the assumptions, 
    \[
    P(\thLi \neq 0 , \thL \notin \partial \Theta_\epsilon ) \leq 
    P\left (r_\epsilon^{-1} \rho_\epsilon(\thL, \tilde{\theta}_{\epsilon}) \geq 
    \frac 1{2\sqrt{\sfp_\epsilon}} \lambda_\epsilon  r_\epsilon^{-1} \check w_{\epsilon} \right).  
    \]
     The right-hand side of the above formula converges to zero, and this suffices in the fixed dimension case. In the diverging dimension case, we need a quantitative estimate for the term above, due to the fact that there is 
     a diverging number of terms of the form 
     $P(\thLi \neq 0 , \thL \notin \partial \Theta_\epsilon )$. 
First, note that 
\begin{align*}
    r_\epsilon^{-1} \rho_\epsilon(\thL, \tilde{\theta}_{\epsilon})
    & \leq 
    r_\epsilon^{-1} 
    \rho_\epsilon(\thL, \theta_{\epsilon}^*) + 
    r_\epsilon^{-1} 
    \rho_\epsilon(\tilde{\theta}_{\epsilon}, \theta_{\epsilon}^*)
    \\ & \stackrel{\eqref{eq:theta-lasso-bound}}{\leq}
    2 r_\epsilon^{-1}
            {\rho_\epsilon(\tilde \theta_\epsilon, \theta_\epsilon^*)}
            + 
            \lambda_\epsilon
            \frac{r_\epsilon^{-1} }{\sqrt{\sfp_\ep}} \bar w_\epsilon |\!| \theta^*_\epsilon |\!|_0 + 
            r_\epsilon^{-1} 
    \rho_\epsilon(\tilde{\theta}_{\epsilon}, \theta_{\epsilon}^*). 
\end{align*}
By the assumption on the rates,  we can find  $\bar \nu > 0$ such that
eventually, for all $\epsilon$ small enough, 
\[
 \lambda_\epsilon  \frac{r_\epsilon^{-1} }{6\sqrt{\sfp_\ep}} \big( \check w_{\epsilon} -  2\bar w_\epsilon |\!| \theta^*_\epsilon |\!|_0 \big) \geq \bar \nu.
\]

Therefore, by \autoref{thm:rates} and its proof, 
\begin{align}\label{eq:quant-lasso-err}
    P\left (r_\epsilon^{-1} \rho_\epsilon(\thL, \tilde{\theta}_{\epsilon}) \geq 
    \frac 1{2\sqrt{\sfp_\epsilon}} \lambda_\epsilon  r_\epsilon^{-1} \check w_{\epsilon} \right)
    & \leq 
    P\left ( 
    3 r_\epsilon^{-1} 
    \rho_\epsilon(\tilde{\theta}_{\epsilon}, \theta_{\epsilon}^*)
    \geq  \frac 1{2\sqrt{\sfp_\epsilon}} \lambda_\epsilon  r_\epsilon^{-1} \big( \check w_{\epsilon} -  2\bar w_\epsilon |\!| \theta^*_\epsilon |\!|_0 \big )
    \right) \notag \\
    & \leq 
    P\left ( 
    r_\epsilon^{-1} 
    \rho_\epsilon(\tilde{\theta}_{\epsilon}, \theta_{\epsilon}^*)
    \geq \bar \nu
    \right)
     \notag \\ &
    \stackrel{\eqref{eq:err-bound-O}}{\lesssim}
O\left(\left(\log\ep^{-1}\right)^{1/(r-1)}  \epsilon^{\lN} \right)     
\end{align}
Then we conclude that
    \begin{align*}
         P(\thLi \neq 0 \;\; \text{for some } (i,j) \notin E_\epsilon)
   &\leq 
   P( \thL \in \partial \Theta_\epsilon ) + 
   \sum_{(i, j) \notin E_\epsilon} P(\thLi \neq 0 , \thLi \notin \partial \Theta_\epsilon )
   \\
   & \lesssim
   P( \thL \in \partial \Theta_\epsilon )
   + 
   (\sfp_\epsilon^2 - \sfk_\epsilon)
   O\left(\left(\log\ep^{-1}\right)^{1/(r-1)}  \epsilon^{\lN} \right)   
   \\
   & \lesssim
   P( \thL \in \partial \Theta_\epsilon )
   + O\left(\left(\log\ep^{-1}\right)^{1/(r-1)}  \epsilon^{\lN - 2 \lG} \right)  \to 0 
    \end{align*}
  where we used the fact that, 
  by assumption, $\sfp_\epsilon =  O (\epsilon^{-\lG})$,  $\lG < \lN / 2$, 
  while  $\sfk_\epsilon = O(\log \sfp_\epsilon)$, 
  hence negligible.
This proves that no spurious edges are selected with probability tending to one.
    \textit{Step 2.}
    Next we focus on the event $(\hat E_\epsilon \supset E_\epsilon)$. 
    Denote by 
    \[
    \mathrm{sgn}(x) = \begin{cases}
        +1 & x > 0 \\
        0 & x=0 \\
        -1 & x < 0.
    \end{cases}
    \]
    It holds that
    \[
    \big(\mathrm{sgn} ( \thLi) = \mathrm{sgn} ( \theta^*_{\epsilon, ij}) \,\, \forall i,j \in E_\epsilon \big)
    \subset 
    \big(\thLi \neq 0 \,\, \forall i,j \in E_\epsilon  \big) \subset ( \hat E_\epsilon \supset E_\epsilon ).
    \]
    For any $(i, j) \in E_\epsilon$, supposing without loss of generality $\mathrm{sgn} (\theta^*_{\epsilon, ij}) = +1$, we have 
    \begin{align*}
        &
        P\big(\mathrm{sgn} ( \thLi) \neq \mathrm{sgn} ( \theta^*_{\epsilon, ij}) \big)
        = 
        P( \thLi < 0) 
        \\ &= 
        P\left( \frac{r_\epsilon^{-1}}{\sqrt \sfp_\epsilon} (\thLi -  \theta^*_{\epsilon, ij}) < 
        - \frac{r_\epsilon^{-1}}{\sqrt \sfp_\epsilon} \theta^*_{\epsilon, ij} \right)
        \\ & \leq
        P\left( r_\epsilon^{-1} \rho_\epsilon(\thL, \theta^*_{\epsilon}) > 
         \frac{r_\epsilon^{-1}}{\sqrt \sfp_\epsilon} \theta_{\min} \right) 
    \end{align*}
    and recall that 
     $ \frac{r_\epsilon^{-1}}{\sqrt \sfp_\epsilon} \gtrsim \epsilon^{- \gamma + \lG/2} \to \infty $ and then  the probability above converges to zero. In order to obtain a quantitative estimate we argue as in 
    \eqref{eq:quant-lasso-err}, and we get
    \begin{align*}
         P\left( r_\epsilon^{-1} \rho_\epsilon(\thL, \theta^*_{\epsilon}) > 
         \frac{r_\epsilon^{-1}}{\sqrt \sfp_\epsilon} \theta_{\min} \right) & 
         \leq 
          P\left( 2r_\epsilon^{-1} \rho_\epsilon(\tilde \theta_\epsilon, \theta^*_{\epsilon}) > 
          \frac{r_\epsilon^{-1}}{\sqrt \sfp_\epsilon}
         \left(  \theta_{\min} - \lambda_\epsilon \bar w_\epsilon
         |\!| \theta^*_{\epsilon}|\!|_0
         \right) \right) \\
         & \lesssim O\left(\left(\log\ep^{-1}\right)^{1/(r-1)}  \epsilon^{\lN} \right) 
    \end{align*}

    In conclusion, recalling that by Assumption \ref{as:balance2}, $|E_\epsilon| \leq \sfp_\ep \sfk_\epsilon \leq O(\sfp_\epsilon \log \sfp_\epsilon) \leq O(\log \epsilon^{-1} \epsilon^{-\lG})$, 
    \begin{align*}
         P( \hat E_\epsilon \not\supset E_\epsilon ) &\leq \sum_{(i,j) \in E_\epsilon} 
    P\big(\mathrm{sgn} ( \thLi) \neq \mathrm{sgn} ( \theta^*_{\epsilon, ij}) \big) 
    \leq 
    \sfk_\epsilon  \sfp_\epsilon
    P\left( r_\epsilon^{-1} \rho_\epsilon(\thL, \theta^*_{\epsilon}) > 
         \frac{r_\epsilon^{-1}}{\sqrt \sfp_\epsilon} \theta_{\min} \right) 
    \\
    &\lesssim
    O\left(\left(\log\ep^{-1}\right)^{1/(r-1) +1 }  \epsilon^{\lN - \lG} \right) 
    \to 0.
    \end{align*}
   
  This proves that all true edges are retained with probability tending to one.

\subsection{Proof of Lemma \ref{lem:ini-bound}}\label{sec:proof-ini-bound}
    
We have
\beas 
\big|S^{\ep,i}(t,x,\theta_\ep)-S^{\ep,i}(t,y,\theta_\ep)\big|
&\leq&
\consta(\sfk^\ep_i)^{1-\frac{1}{r}}\bigg\{\sum_{k\in\caln^\ep_i}\big|x_k-y_k\big|^r\bigg\}^{1/r}
\nn\\&=&
\consta\sfk^\ep_i\bigg\{(\sfk^\ep_i)^{-1}\sum_{k\in\caln^\ep_i}\big|x_k-y_k\big|^r\bigg\}^{1/r}
\quad(r\in[1,\infty))
\eeas
and 
\beas 
\big|S^{\ep,i}(t,x,\theta_\ep)-S^{\ep,i}(t,y,\theta_\ep)\big|
&\leq&
\consta\sfk^\ep_i\max_{k\in\caln^\ep_i}\big|x_k-y_k\big|
\eeas
for $x=(x_k)_{k\in\caln^\ep_i}$ and $y=(y_k)_{k\in\caln^\ep_i}$. 

We remark that these estimates give the same magnitude of coefficient $\sfk^\ep_i$. 

\beas 
m_r^\ep(t) 
&\leq&
2^{r-1}\sup_{s\in[0,t]}(\sfd_\ep)^{-1}\sum_{i\in[\sfd_\ep]}\bigg|
\int_0^s\bigg(S^{\ep,i}(v,\bbX^{\ep,i}_t,\theta_\ep^*)-S^{\ep,i}(v,x^{\ep,i}_t,\theta_\ep^*)\bigg)dv\bigg|^r
+2^{r-1}R^\ep_r(t)
\nn\\&\leq^{(\ref{202508070344})}&
2^{r-1}\sup_{s\in[0,t]}(\sfd_\ep)^{-1}t^{r-1}\sum_{i\in[\sfd_\ep]}
\int_0^s{\consta}^r\bigg(\sum_{k=1}^{\sfk^\ep_i}\big|(\bbX^{\ep,i}_v)_k-(x^{\ep,i}_v)_k\big|\bigg)^rdv
+2^{r-1}R^\ep_r(t)
\nn\\&\leq&
2^{r-1}\sup_{s\in[0,t]}(\sfd_\ep)^{-1}{\consta}^rt^{r-1}(\max_{i\in[\sfd_\ep]}\sfk^\ep_i)^{r-1}\sum_{i\in[\sfd_\ep]}
\int_0^s\sum_{k=1}^{\sfk^\ep_i}\big|(\bbX^{\ep,i}_v)_k-(x^{\ep,i}_v)_k\big|^rdv
+2^{r-1}R^\ep_r(t)
\nn\\&\leq&
2^{r-1}\sup_{s\in[0,t]}(\sfd_\ep)^{-1}{\consta}^rt^{r-1}(\max_{i\in[\sfd_\ep]}\sfk^\ep_i)^{r-1}
\int_0^s {\sf o}_\ep\sum_{i\in[\sfd_\ep]}\big|X^{\ep,i}_v-x^{\ep,i}_v\big|^rdv
+2^{r-1}R^\ep_r(t)
\nn\\&\leq&
2^{r-1}{\consta}^rT^{r-1}(\max_{i\in[\sfd_\ep]}\sfk^\ep_i)^{r-1}{\sf o}_\ep
\int_0^t m_r^\ep(v)dv
+2^{r-1}R^\ep_r(t)
\eeas
for $r\in[1,\infty)$. 
Therefore, 
\bea
m^\ep_r(t)
&\leq&
2^{r-1}R^\ep_r(t)\exp\bigg(2^{r-1}{\consta}^rT^{r-1}t(\max_{i\in[\sfd_\ep]}\sfk^\ep_i)^{r-1}{\sf o}_\ep\bigg)
\eea
for $t\in[0,T]$ and $r\in[1,\infty)$.

%
On the other hand,  
\beas 
m_\infty^\ep(t) 
&\leq&
\sup_{s\in[0,t]}\max_{i\in[\sfd_\ep]}\bigg|
\int_0^s\bigg(S^{\ep,i}(v,\bbX^{\ep,i}_v,\theta_\ep^*)-S^{\ep,i}(v,x^{\ep,i}_v,\theta_\ep^*)\bigg)dv\bigg|
+R^\ep_\infty(t)
\nn\\&\leq^{(\ref{202508070344})}&
\sup_{s\in[0,t]}\max_{i\in[\sfd_\ep]}
\int_0^s{\consta}\sum_{k=1}^{\sfk^\ep_i}\big|(\bbX^{\ep,i}_v)_k-(x^{\ep,i}_v)_k\big|dv
+R^\ep_\infty(t)
\nn\\&\leq&
\consta(\max_{i\in[\sfd_\ep]}\sfk^\ep_i)
\int_0^tm_\infty^\ep(v)dv
+R^\ep_\infty(t).
\eeas
Therefore, 
\bea
m_\infty^\ep(t)
&\leq&
R^\ep_\infty(t)\exp\bigg(t\consta\max_{i\in[\sfd_\ep]}\sfk^\ep_i\bigg).
\eea

\subsection{Proof of Lemma \ref{lem:stoch-rem-bound}}
\label{sec:proof-stoch-rem}
Since $R^\ep_r(t)^{1/r}\leq R^\ep_\infty(t)$ for $r\in[1,\infty)$, we will estimate $R^\ep_\infty(T)$. 
Let $\ep\in(0,1]$ and $i\in[\sfd_\ep]$. 
Let $\tau_t^{\ep,i}=\inf\{u;\int_0^u\big(\sigma^{\ep,i}(v)\big)^2dv>t\}$ and $\wt{\calf}_t=\calf_{\tau_t^{\ep,i}}$. 
By the martingale representation theorem, on some extension $(\check{\Omega}^{\ep,i},\check{\calf}^{\ep,i},\check{\bbF}^{\ep,i},\check{P}^{\ep,i})$ of 
$(\Omega,\calf,\bbF,P)$, 
there exists a Brownian motion $B^{\ep,i}=(B_t^{\ep,i})_{t\in[0,\infty)}$ with respect to $(\wt{\calf}_t^{\ep,i})_{t\in\bbR_{\geq0}}$ 
such that 
\beas 
\int_0^{t\wedge T(\ep,A)}\sigma^{\ep,i}(v)dw^i_v
&=& 
B^{\ep,i}\bigg(\int_0^{t\wedge T(\ep,A)}\big(\sigma^{\ep,i}(v)\big)^2dv\bigg)\qquad(t\in[0,T]). 
\eeas
Then, Condition \ref{as:diff} gives
\beas 
\sup_{t\in[0,T]}\bigg|\int_0^{t\wedge T(\ep,A)}\sigma^{\ep,i}(v)dw^i_v\bigg|
&\leq&
\sup_{t\in[0,TA^2\ep^2]}|B^{\ep,i}(t)|. 
\eeas
It is known that 
\beas 
\check{P}\bigg[\sup_{t\in[0,TA^2\ep^2]}|B^{\ep,i}(t)|>x\bigg]
&\leq&
2\pi^{-1/2}\frac{\sqrt{2TA^2\ep^2}}{x}\exp\bigg(-\frac{x^2}{2TA^2\ep^2}\bigg)\qquad(x>0). 
\eeas
As a result, 
\bea
P\big[R^\ep_\infty(T(\ep,A))>x\mid\calf_0\big]
&\leq&
\sum_{i\in[\sfd_\epsilon]}
P\bigg[\sup_{t\in[0,T]}\bigg|\int_0^{t\wedge T(\ep,A)}\sigma^{\ep,i}(v)dw^i_v\bigg|>x\>\big|\>\calf_0\bigg]
\nn\\&\leq&
\sfd_\ep\check{P}^{\ep,i}\bigg[\sup_{t\in[0,TA^2\ep^2]}|B^{\ep,i}(t)|>x\>\big|\>\calf_0\bigg]
\nn\\&=&
\sum_{i\in[\sfd_\epsilon]}E^{\check{P}^{\ep,i}}\bigg[\check{P}^{\ep,i}\bigg[\sup_{t\in[0,TA^2\ep^2]}|B^{\ep,i}(t)|>x\>\big|\>\check{\calf}_0\bigg]\>\big|\>\calf_0\bigg]
\nn\\&=&
\sum_{i\in[\sfd_\epsilon]}E^{\check{P}^{\ep,i}}\bigg[\check{P}^{\ep,i}\bigg[\sup_{t\in[0,TA^2\ep^2]}|B^{\ep,i}(t)|>x\bigg]\>\big|\>\calf_0\bigg]
\nn\\&\leq&
\constc\sfd_\ep\frac{\sqrt{2TA^2\ep^2}}{x}\exp\bigg(-\frac{x^2}{2TA^2\ep^2}\bigg)\qquad(x>0)
\eea
with $\constc=2\pi^{-1/2}>1$.

\subsection{Proof of Lemma \ref{lem:diff-bound}}\label{sec:proof-diff-bound}

\beas 
\sup_{i\in[\sfd_\ep]}\big|X^{\ep,i}_s-x^{\ep,i}_s\big|
&\leq&
\sup_{i\in[\sfd_\ep]}\bigg|
\int_0^s\bigg(S^{\ep,i}(v,\bbX^{\ep,i}_t,\theta_\ep^*)-S^{\ep,i}(v,x^{\ep,i}_t,\theta_\ep^*)\bigg)dv\bigg|
+\sup_{i\in[\sfd_\ep]}\bigg|\int_0^s\sigma^{\ep,i}(v)dw^i_v\bigg|
\nn\\&\leq^{(\ref{202508070344})}&
\int_0^s{\consta}\bigg(\sum_{k=1}^{\sfk^\ep_i}\big|(\bbX^{\ep,i}_v)_k-(x^{\ep,i}_v)_k\big|\bigg)dv
+\sup_{i\in[\sfd_\ep]}\bigg|\int_0^s\sigma^{\ep,i}(v)dw^i_v\bigg|
\nn\\&\leq&
(\max_{i\in[\sfd_\ep]}\sfk^\ep_i)\int_0^s{\consta}\bigg(\sup_{i\in[\sfd_\ep]}\big|X^{\ep,i}_v-x^{\ep,i}_v\big|\bigg)dv
+\sup_{i\in[\sfd_\ep]}\sup_{s\in[0,T]}\bigg|\int_0^s\sigma^{\ep,i}(v)dw^i_v\bigg|
\eeas
and hence, 
\beas 
\sup_{i\in[\sfd_\ep]}\sup_{s\in[0,T]}\big|X^{\ep,i}_s-x^{\ep,i}_s\big|
&\leq&
\bigg(\sup_{i\in[\sfd_\ep]}\sup_{s\in[0,T]}\bigg|\int_0^s\sigma^{\ep,i}(v)dw^i_v\bigg|\bigg)
\exp\bigg(\consta T\max_{i\in[\sfd_\ep]}\sfk^\ep_i\bigg). 
\eeas
Then 
\beas 
\sup_{i\in[\sfd_\ep]}
\big|\sigma^{\ep,i}(t)\big|
&\leq&
\constz\ep\bigg[1
+(\max_{i\in[\sfd_\ep]}\sfk^\ep_i)\sup_{i\in[\sfd_\ep]}\sup_{s\in[0,T]}\big|x^{\ep,i}_s\big|
\nn\\&&
+(\max_{i\in[\sfd_\ep]}\sfk^\ep_i)\bigg(\sup_{i\in[\sfd_\ep]}\sup_{s\in[0,t]}\bigg|\int_0^s\sigma^{\ep,i}(v)dw^i_v\bigg|\bigg)
\exp\bigg(\consta T\max_{i\in[\sfd_\ep]}\sfk^\ep_i\bigg)\bigg]
\eeas
%
We have 
\beas&&
P\big[T(\ep,A_\ep)<T\big]
\nn\\&\leq&
P\bigg[\max_{i\in[\sfd_\ep]}\sup_{t\in[0,T(\ep,A_\ep)]}|\sigma^{\ep,i}(t)|\geq A_\ep\ep\bigg]
\nn\\&\leq&
P\bigg[
1
+(\max_{i\in[\sfd_\ep]}\sfk^\ep_i)\sup_{i\in[\sfd_\ep]}\sup_{s\in[0,T]}\big|x^{\ep,i}_s\big|
\nn\\&&
+(\max_{i\in[\sfd_\ep]}\sfk^\ep_i)\bigg(\sup_{i\in[\sfd_\ep]}\sup_{s\in[0,T(\ep,A_\ep)]}\bigg|\int_0^s\sigma^{\ep,i}(v)dw^i_v\bigg|\bigg)
\exp\bigg(\consta T\max_{i\in[\sfd_\ep]}\sfk^\ep_i\bigg)
\geq\constz^{-1}A_\ep\bigg]
\nn\\&\leq&
P\bigg[
\sup_{i\in[\sfd_\ep]}\sup_{s\in[0,T(\ep,A_\ep)]}\bigg|\int_0^s\sigma^{\ep,i}(v)dw^i_v\bigg|
\geq2^{-1}\constz^{-1}A_\ep(\max_{i\in[\sfd_\ep]}\sfk^\ep_i)^{-1}\exp\bigg(-\consta T\max_{i\in[\sfd_\ep]}\sfk^\ep_i\bigg)\bigg]
\nn\\&\leq&
\sfd_\ep\times
2\pi^{-1/2}\frac{\sqrt{2T A_\ep^2\ep^2}}{x}\exp\bigg(-\frac{x^2}{2TA_\ep^2\ep^2}\bigg)
\eeas
whenever 
\bea \label{eq:a-cond-proof}
A_\ep>2\constz\bigg[1
+(\max_{i\in[\sfd_\ep]}\sfk^\ep_i)\sup_{i\in[\sfd_\ep]}\sup_{s\in[0,T]}\big|x^{\ep,i}_s\big|\bigg] 
\eea
holds, where
\beas 
x &=& 
2^{-1}\constz^{-1}A_\ep(\max_{i\in[\sfd_\ep]}\sfk^\ep_i)^{-1}\exp\bigg(-\consta T\max_{i\in[\sfd_\ep]}\sfk^\ep_i\bigg) .
\eeas

\subsection{Proof of Lemma \ref{lem:ident-gap-1}} \label{sec:proof-ident-1}
Comparing $\psi^{ij}\big(z^{\ep,i}_s(\theta_\ep),z^{\ep,j}_s(\theta_\ep)\big)$ and $\psi^{ij}\big(x^{\ep,i}_0,x^{\ep,j}_0\big)$ for $j\in  N^\ep_i$, we see 
\bea\label{202508110330} &&
\bigg|z^{\ep,i}_t(\theta_\ep)-z^{\ep,i}_t(\theta_\ep^*)
-t\sum_{j\in\checkN_i^\ep}(\theta^{ij}-\theta^{ij*}) \psi^{ij}\big(x^{\ep,i}_0,x^{\ep,j}_0\big)\bigg|
\nn\\&=&
\bigg|\sum_{j\in\checkN_i^\ep}
\int_0^{t}\theta^{ij} \psi^{ij}\big(z^{\ep,i}_s(\theta_\ep),z^{\ep,j}_s(\theta_\ep)\big)ds
-\sum_{j\in\checkN_i^\ep}
\int_0^{t}\theta^{ij*} \psi^{ij}\big(z^{\ep,i}_s(\theta_\ep^*),z^{\ep,j}_s(\theta_\ep^*)\big)ds
\nn\\&&
-t\sum_{j\in\checkN_i^\ep}(\theta^{ij}-\theta^{ij*}) \psi^{ij}\big(x^{\ep,i}_0,x^{\ep,j}_0\big)\bigg|
\nn\\&\leq&
\consti\constj
(\max_{i}\sfk_i^\ep)t
\max_{j\in\checkN^\ep_i\cup\{i\}}\sup_{s\in[0,t]}\big|z^{\ep,j}_s(\theta_\ep)-x^{\ep,j}_0\big| 
\nn\\& & \quad + \consti\constj
(\max_{i}\sfk_i^\ep)t \max_{j\in\checkN^\ep_i\cup\{i\}}\sup_{s\in[0,t]}\big|z^{\ep,j}_s(\theta_\ep^*)-x^{\ep,j}_0\big| 
. 
\eea

Under \ref{as:drift-lin}, 
\bea\label{202508110543}
\sup_{i,j,\ep,\omega}\big|\psi^{ij}\big(x^{\ep,i}_0,x^{\ep,j}_0\big)\big|\yleq\consti(1+\constt)\><\>\infty. 
\eea
Then
\beas
\max_{i\in[\sfd_\ep]}\big|z^{\ep,i}_t(\theta_\ep)-x^{\ep,i}_0\big|
&\leq&
\max_{i\in[\sfd_\ep]}\bigg|\int_0^t \sum_{j\in\checkN^\ep_i}\theta^{ij}\psi^{i,j}(z^{\ep,i}_s(\theta_\ep),z^{\ep,j}_s(\theta_\ep))ds\bigg|
\nn\\&\leq&
\consti\constj\int_0^t
\max_{i\in[\sfd_\ep]}
\sum_{j\in N^\ep_i}\big|z^{\ep,j}_s(\theta_\ep)-x^{\ep,j}_0\big|ds
+\constj t\max_{i\in[\sfd_\ep]}\sum_{j\in N^\ep_i}|\psi^{i,j}(x^{\ep,i}_0,x^{\ep,j}_0)|
\eeas
and hence 
\beas
\sup_{s\in[0,u]}\max_{i\in[\sfd_\ep]}\big|z^{\ep,i}_s(\theta_\ep)-x^{\ep,i}_0\big|
&\leq&
\consti\constj(\max_{i\in[\sfd_\ep]}\sfk^\ep_i)\int_0^u\sup_{r\in[0,s]}\max_{i\in[\sfd_\ep]}\big|z^{\ep,j}_r(\theta_\ep)-x^{\ep,j}_0\big| ds
\nn\\&&
+\consti(1+\constt)\constj\> t(\max_{i\in[\sfd_\ep]}\sfk^\ep_i)
\eeas
for $u\in[0,t]$. 
Therefore, 
\bea\label{202508110350}
\sup_{t\in[0,t]}\max_{i\in[\sfd_\ep]}\big|z^{\ep,i}_s(\theta_\ep)-x^{\ep,i}_0\big|
&\leq&
\consti(1+\constt)\constj\> t(\max_{i\in[\sfd_\ep]}\sfk^\ep_i)
\exp\bigg(\consti\constj t(\max_{i\in[\sfd_\epsilon]}\sfk^\ep_i)\bigg).
\eea


We have
\bea\label{202508110356} &&
\bigg|t\sum_{j\in\checkN_i^\ep}(\theta^{ij}-\theta^{ij*}) \psi^{ij}\big(x^{\ep,i}_0,x^{\ep,j}_0\big)\bigg|^2
-
\bigg|z^{\ep,i}_t(\theta_\ep)-z^{\ep,i}_t(\theta_\ep^*)\bigg|^2
\nn\\&\leq&
\bigg|z^{\ep,i}_t(\theta_\ep)-z^{\ep,i}_t(\theta_\ep^*)
-t\sum_{j\in\checkN_i^\ep}(\theta^{ij}-\theta^{ij*}) \psi^{ij}\big(x^{\ep,i}_0,x^{\ep,j}_0\big)\bigg|
\nn\\&&\times
\bigg|(z^{\ep,i}_t(\theta_\ep)-x^{\ep,i}_0)-(z^{\ep,i}_t(\theta_\ep^*)-x^{\ep,i}_0)
+t\sum_{j\in\checkN_i^\ep}\theta^{ij} \psi^{ij}\big(x^{\ep,i}_0,x^{\ep,j}_0\big)
-t\sum_{j\in\checkN_i^\ep}\theta^{ij*} \psi^{ij}\big(x^{\ep,i}_0,x^{\ep,j}_0\big)\bigg|
\nn\\&\leq^{(\ref{202508110330}), (\ref{202508110350})}&
2
{\consti^2}(1+\constt){\constj^2}\>
(\max_{i\in[\sfd_\ep]}\sfk^\ep_i)^2t^2
\exp\bigg(\consti\constj t(\max_{i\in[\sfd_\epsilon]}\sfk^\ep_i)\bigg)\nn\\&&\times
3\consti(1+\constt)\constj t(\max_{i\in[\sfd_\ep]}\sfk^\ep_i)\max_{i\in[\sfd_\ep]}
\exp\bigg(\consti\constj t(\max_{i\in[\sfd_\epsilon]}\sfk^\ep_i)\bigg)
\nn\\&\leq&
6{\consti^3}(1+\constt)^2{(\constj)^3}
(\max_{i}\sfk^\ep_i)^3t^3
\exp\bigg(2\consti\constj t(\max_{i\in[\sfd_\epsilon]}\sfk^\ep_i)\bigg)
\nn\\&=:&
\eta(\ep,t)
\eea
for all $i\in[\sfd_\ep]$. 
By (\ref{202508110356}), we obtain
\beas&&
T^{-1}\int_0^{\ep^{\constd/3}}(\sfd_\ep)^{-1}\sum_{i\in[\sfd_\ep]}\big|z^{\ep,i}_t(\theta_\ep)-z^{\ep,i}_t(\theta_\ep^*)\big|^2dt
\nn\\&\geq&
T^{-1}\int_0^{\ep^{\constd/3}}(\sfd_\ep)^{-1}\sum_{i\in[\sfd_\ep]}
\bigg\{\bigg|t\sum_{j\in\checkN_i^\ep}(\theta^{ij}-\theta^{ij*}) \psi^{ij}\big(x^{\ep,i}_0,x^{\ep,j}_0\big)\bigg|^2
-\eta(\ep,t)\bigg\}
dt
\nn\\&\geq&
G^\ep(\theta_\ep,\constd)-I^\ep(\constd)
\eeas
for $\ep\leq T^{3/\constd}\wedge1$. 
This estimate is combined with (\ref{202508071017}) to show (\ref{202508100930}) since 
$
I^\ep(\constd)
\geq
T^{-1}\int_0^{\ep^{\constd/3}}\eta(\ep,t)dt
$.

\subsection{Proof of Lemma \ref{lem:delta}}
\label{sec:proof-delta}

$\Delta^\ep_r\leq\Delta^\ep_\infty$ by definition. Therefore we focus on 
$\Delta^\ep_\infty$.
First note that    
\beas &&
\max_{i\in[\sfd_\ep]}\sup_{s\in[0,t]}\big|x^{\ep,i}_s-z^{\ep,i}_s\big|
\nn\\&\leq&
\max_{i\in[\sfd_\ep]}\sup_{s\in[0,t]}\bigg|\sum_{j\in\checkN_i^\ep}
\int_0^{s}\theta^{ij*} \psi^{ij}\big(x^{\ep,i}_v,x^{\ep,j}_v\big)dv
-\sum_{j\in\checkN_i^\ep}
\int_0^{s}\theta^{ij*} \psi^{ij}\big(z^{\ep,i}_v,z^{\ep,j}_v\big)dv\bigg|
\nn\\&&
+\max_{i\in[\sfd_\ep]}\sup_{s\in[0,t]}\ep^\alpha\bigg|\int_0^s\sum_{j\in\checkN_i^\ep}V^{ij}(t,x^{\ep,i}_v,x^j_v)dv\bigg|
\nn\\&\leq&
\consti\constj(\max_{i}\sfk_\ep)\int_0^t
\max_{i\in[\sfd_\ep]}\sup_{s\in[0,v]}\big|x^{\ep,i}_s-z^{\ep,i}_s\big|dv
+\consti\ep^\alpha(\max_{i}\sfk_\ep)
\int_0^t\big(1+\max_{i\in[\sfd_\ep]}\big|x^{\ep,i}\big|\big)dv,
\eeas
and hence 
\bea\label{202508162212}
\Delta^\ep_\infty
 &=&
\max_{i\in[\sfd_\ep]}\sup_{s\in[0,T]}\big|x^{\ep,i}_s-z^{\ep,i}_s\big|
\nn\\&\leq^{(\ref{202508110733})}&
\consti\ep^\alpha(\max_{i}\sfk_\ep)T\bigg[1+
\bigg\{\max_{i\in[\sfd_\ep]}|x^{\ep,i}_0|
+
T\consti (\max_{i\in[\sfd_\ep]}\sfk^\ep_i)\bigg\}
\exp\big(T\consti (\max_{i\in[\sfd_\ep]}\sfk^\ep_i)\big)\bigg]
\nn\\&&\times
\exp\big(\consti\constj(\max_{i}\sfk^\ep_i)T\big)
\nn\\&\leq&
\consti\ep^\alpha \sfk_\ep T
\bigg\{1+\max_{i\in[\sfd_\ep]}|x^{\ep,i}_0|
+
 T\consti \sfk_\ep\bigg\}
\exp\big(T(\consti+\consti\constj)\sfk_\ep\big). 
\nn\\&&
\eea
As for the inequality (\ref{202508110733}) used above, routinely, 
\beas
\max_{i\in[\sfd_\ep]}\sup_{s\in[0,t]}
\big|x^{\ep,i}_s(\theta_\ep)\big|
&\leq&
\max_{i\in[\sfd_\ep]}|x^{\ep,i}_0|
+\max_{i\in[\sfd_\ep]}\sup_{s\in[0,t]}
\bigg|
\int_0^sS^{\ep,i}(v,\bbx^{\ep,i},\theta_\ep)dv\bigg|
\nn\\&\leq&
\max_{i\in[\sfd_\ep]}|x^{\ep,i}_0|
+\max_{i\in[\sfd_\ep]}
\int_0^t\consti \sum_{j\in\caln^\ep_i}(1+|x^{\ep,j}_v|)dv
\nn\\&\leq&
\max_{i\in[\sfd_\ep]}|x^{\ep,i}_0|
+
\int_0^t\consti (\max_{i\in[\sfd_\ep]}\sfk^\ep_i)\max_{i\in[\sfd_\ep]}(1+|x^{\ep,i}_v|)dv
\nn\\&\leq&
\max_{i\in[\sfd_\ep]}|x^{\ep,i}_0|
+
t\consti (\max_{i\in[\sfd_\ep]}\sfk^\ep_i)
+\consti (\max_{i\in[\sfd_\ep]}\sfk^\ep_i)
\int_0^t\max_{i\in[\sfd_\ep]}\sup_{s\in[0,v]}|x^{\ep,i}_s|dv
\eeas
for all $\theta_\ep\in\Theta_\ep$, 
and hence 
\bea\label{202508110733}
\sup_{\theta_\ep\in\Theta_\ep}
\max_{i\in[\sfd_\ep]}\sup_{s\in[0,t]}
\big|x^{\ep,i}_s(\theta_\ep)\big|
&\leq&
\bigg\{\max_{i\in[\sfd_\ep]}|x^{\ep,i}_0|
+
t\consti (\max_{i\in[\sfd_\ep]}\sfk^\ep_i)\bigg\}
\exp\big(t\consti (\max_{i\in[\sfd_\ep]}\sfk^\ep_i)\big).
\eea
%

\subsection{Proof of Lemma \ref{lem:omega-bound}}
\label{sec:proof-omega}

Let 
\beas 
q^\ep_i(\theta_\ep) 
&=& 
\big|\sum_{j\in\checkN_i^\ep}(\theta^{ij}_\ep-\theta^{ij*}_\ep) \psi^{ij}\big(x^{\ep,i}_0,x^{\ep,j}_0\big)\big|^2
\eeas
and 
$\wt{q}^\ep_i(\theta_\ep) =q^\ep_i(\theta_\ep) -E[q^\ep_i(\theta_\ep) ]$. 
Then 
\beas
\wt{q}^\ep_i(\theta_\ep) 
&=& 
\sum_{j_1,j_2\in\checkN_i^\ep}(\theta^{ij_1}_\ep-\theta^{ij_1*}_\ep)(\theta^{ij_2}_\ep-\theta^{ij_2*}_\ep)
\wt{\psi}^{\otimes2}_i(j_1,j_2),
\eeas
where 
\beas 
\wt{\psi}^{\otimes2}_i(j_1,j_2)
&=&
\psi^{ij_1}\big(x^{\ep,i}_0,x^{\ep,j_1}_0\big)\psi^{ij_2}\big(x^{\ep,i}_0,x^{\ep,j_2}_0\big)
-E\big[\psi^{ij_1}\big(x^{\ep,i}_0,x^{\ep,j_1}_0\big)\psi^{ij_2}\big(x^{\ep,i}_0,x^{\ep,j_2}_0\big)\big].
\eeas
Obviously, 
\beas
\esssup_\omega\max_{i\in[\sfd_\epsilon]}\max_{j_1,j_2\in N^\ep_i}\big|\wt{\psi}^{\otimes2}_i(j_1,j_2)\big|
&\leq&
2(\consti)^2(1+\constt)^2
\>=:\>
\constu.
\eeas

We need an intermediate result. 
\begin{lemma}\label{202508202025}
Under Conditions \ref{as:drift-lin},
\ref{as:mixing},
\bea\label{202508210502}
E\bigg[
\sup_{\theta_\ep\in\Theta_\ep}\bigg|
\sum_{i\in[\sfd_\ep]}\wt{q}^\ep_i(\theta_\ep)\bigg|\bigg]
&\leq&
{2^{-1/2}}(\constj)^2\constu\>\sfp_\ep(\max_{i\in[\sfp_\ep]}\sfk^\ep_i)^2
\bigg\{\sum_{h=0}^{n-1}(n-h)\alpha^\ep(h)\bigg\}^{1/2}. 
\eea
The left-hand side of (\ref{202508210502}) depends on $n$ through $\sfd_\ep=\sfp_\ep n$. 
\end{lemma}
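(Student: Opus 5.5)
The supremum over $\theta_\ep$ will be removed by linearity. By the cyclic identification (\ref{202508202034}), both $\theta^{ij}_\ep$ and $\wt{\psi}^{\otimes2}_i(j_1,j_2)$ depend on $i$ only through $r(i)$ and the block index. For fixed $i\in[\sfp_\ep]$ and $j_1,j_2\in N^\ep_i$, let $i_m=i+(m-1)\sfp_\ep$ denote the copy of node $i$ in block $m$, with neighbours $j_1^{(m)},j_2^{(m)}$ shifted accordingly. Writing $\delta^{ij}=\theta^{ij}_\ep-\theta^{ij*}_\ep$, we then have
\[
\sum_{i\in[\sfd_\ep]}\wt{q}^\ep_i(\theta_\ep)
=\sum_{i\in[\sfp_\ep]}\sum_{j_1,j_2\in N^\ep_i}\delta^{ij_1}\delta^{ij_2}\,S_{i,j_1,j_2},
\qquad
S_{i,j_1,j_2}=\sum_{m\in[n]}\wt{\psi}^{\otimes2}_{i_m}\big(j_1^{(m)},j_2^{(m)}\big).
\]
By \ref{as:drift-lin}(iii), $|\delta^{ij}|\le2\constj$ uniformly on $\Theta_\ep$. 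Hence
\[
\sup_{\theta_\ep}\Big|\sum_{i}\wt{q}^\ep_i(\theta_\ep)\Big|\le 4(\constj)^2\sum_{i\in[\sfp_\ep]}\sum_{j_1,j_2\in N^\ep_i}|S_{i,j_1,j_2}|,
\]
and the double sum over neighbours contains at most $\sfp_\ep(\max_i\sfk^\ep_i)^2$ terms. It therefore suffices to show that every $S_{i,j_1,j_2}$ satisfies
\[
E|S_{i,j_1,j_2}|\le c\,\constu\Big\{\sum_{h=0}^{n-1}(n-h)\alpha^\ep(h)\Big\}^{1/2}.
\]

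To get this, apply Cauchy--Schwarz, $E|S|\le (E[S^2])^{1/2}$, and expand
\[
E[S^2]=\sum_{m,m'\in[n]}\mathrm{Cov}(Y_m,Y_{m'}),
\]
where $Y_m=\wt{\psi}^{\otimes2}_{i_m}(j_1^{(m)},j_2^{(m)})$. Each $Y_m$ is centered, is bounded by $\constu$, and is $\calc^\ep_m$-measurable, since it depends only on initial values in block $m$. For bounded variables, the covariance inequality for $\alpha$-mixing (Davydov/Ibragimov) gives
\[
|\mathrm{Cov}(Y_m,Y_{m'})|\le 4\constu^2\alpha^\ep(|m-m'|).
\]
The definition in \ref{as:mixing} pairs only single blocks $\calc^\ep_{\tt a},\calc^\ep_{\tt b}$, and this is exactly what the pairwise covariance requires. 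This is why the weaker mixing notion suffices. Counting the pairs with $|m-m'|=h$, of which there are at most $2(n-h)$, yields
\[
E[S^2]\le 8\constu^2\sum_{h=0}^{n-1}(n-h)\alpha^\ep(h).
\]
(The $h=0$ term covers the diagonal, because $\mathrm{Var}(Y_m)\le\constu^2$ and $\alpha^\ep(0)$ is included in the sum; if $\alpha^\ep(0)$ happened to be tiny one would instead treat the diagonal separately by the trivial bound, but the stated form keeps $h=0$ inside.)

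Combining the two steps gives a bound of the form
\[
C(\constj)^2\constu\,\sfp_\ep(\max_i\sfk^\ep_i)^2\Big\{\sum_h(n-h)\alpha^\ep(h)\Big\}^{1/2}.
\]
The main obstacle is bookkeeping the constant to reach exactly $2^{-1/2}$. This requires the sharp covariance inequality for bounded variables, $|\mathrm{Cov}|\le 4\|Y\|_\infty\|Y'\|_\infty\alpha$ (or better, using that $Y_m$ is a centered difference of values in an interval of length $\constu$), and it requires tracking the factor $2$ in the count of ordered pairs and the factor from $|\delta|\le 2\constj$. If the constant comes out differently, I would revisit whether $\constu$ bounds the range rather than the absolute value of $\wt\psi^{\otimes2}$. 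This would improve the covariance constant by a factor $1/4$, which with the other factors should produce $2^{-1/2}$. The qualitative structure of the bound is unaffected by this step.
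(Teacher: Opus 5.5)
Your argument is the paper's proof step for step: you regroup by $r(i)$ via the cyclic identification, bound $|\delta^{ij_1}\delta^{ij_2}|\le 4(\constj)^2$, and then apply Cauchy--Schwarz, the $\alpha$-mixing covariance inequality $|\mathrm{Cov}(Y_m,Y_{m'})|\le 4\constu^2\alpha^\ep(|m-m'|)$, and the count of at most $2(n-h)$ ordered pairs at lag $h$. This gives the constant $4\sqrt{8}=8\sqrt2$. That is exactly the constant the paper's own proof ends with. It is also the constant used afterwards: $16\sqrt2=2\cdot 8\sqrt2$ after Markov's inequality in Lemma \ref{lem:omega-bound}. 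So the $2^{-1/2}$ in the statement is inconsistent with the paper's own derivation (apparently a misprint), not a step your proof is missing. You should state the result with $8\sqrt2$ rather than try to reach $2^{-1/2}$.

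Two of your closing remarks need correcting. First, the range refinement does not produce $2^{-1/2}$. Since $\wt{\psi}^{\otimes2}$ takes values in an interval of length $\constu$, you get $|\mathrm{Cov}(Y_m,Y_{m'})|\le\constu^2\alpha^\ep(|m-m'|)$, hence $E[S^2]\le 2\constu^2\sum_h(n-h)\alpha^\ep(h)$. The final constant is then $4\sqrt2$, still a factor $8$ above $2^{-1/2}$. Your claim that this ``should produce $2^{-1/2}$'' is an arithmetic error.

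Second, the diagonal terms need no separate treatment, and $\mathrm{Var}(Y_m)\le\constu^2$ is not the right justification for them. Because $h=0$ in \ref{as:mixing} allows ${\tt a}={\tt b}$, we have $\alpha^\ep(0)\ge\alpha(\calc^\ep_m,\calc^\ep_m)$. The covariance inequality therefore applies to $m=m'$ directly and gives $\mathrm{Var}(Y_m)\le 4\constu^2\alpha^\ep(0)$, however small $\alpha^\ep(0)$ is.
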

\proof 

We have 
\bea\label{202508210427}
E\bigg[
\sup_{\theta_\ep\in\Theta_\ep}\bigg|
\sum_{i\in[\sfd_\ep]}\wt{q}^\ep_i(\theta_\ep)\bigg|\bigg]
&\leq&
4(\constj)^2
\sum_{r,r_1,r_2\in[\sfp_\ep]}
1_{\{r_1,r_2\in N^\ep_r\}}
E_{r,r_1,r_2}(i),
\eea
where 
\beas 
E_{r,r_1,r_2}(i)
&=&
E\bigg[
\bigg|
\sum_{i\in[\sfd_\ep]:\atop r(i)=r}
\wt{\psi}^{\otimes2}_i(r_1+u_\ep(i),r_2+u_\ep(i))
\bigg|
\bigg]. 
\eeas
In fact, 
\beas
\sum_{i\in[\sfd_\ep]}\wt{q}^\ep_i(\theta_\ep)
&=&
\sum_{i\in[\sfd_\ep]}
\sum_{j_1,j_2\in\checkN_i^\ep}
\bigg\{
(\theta^{ij_1}_\ep-\theta^{ij_1*}_\ep)(\theta^{ij_2}_\ep-\theta^{ij_2*}_\ep)
\wt{\psi}^{\otimes2}_i(j_1,j_2)
\bigg\}
\nn\\&=&
\sum_{r,r_1,r_2\in[\sfp_\ep]}
\sum_{i\in[\sfd_\ep]:\atop r(i)=r}
\sum_{j_1,j_2\in\checkN_i^\ep:\atop r(j_1)=r_1,r(j_2)=r_2}
\bigg\{
(\theta^{ij_1}_\ep-\theta^{ij_1*}_\ep)(\theta^{ij_2}_\ep-\theta^{ij_2*}_\ep)
\wt{\psi}^{\otimes2}_i(j_1,j_2)
\bigg\}
\nn\\&=&
\sum_{r,r_1,r_2\in[\sfp_\ep]}
(\theta^{rr_1}_\ep-\theta^{rr_1*}_\ep)(\theta^{rr_2}_\ep-\theta^{rr_2*}_\ep)
\sum_{i\in[\sfd_\ep]:\atop r(i)=r}\
\sum_{j_1,j_2\in\checkN_i^\ep:\atop r(j_1)=r_1,r(j_2)=r_2}
\wt{\psi}^{\otimes2}_i(j_1,j_2), 
\eeas
so that
\beas&&
\sup_{\theta_\ep\in\Theta_\ep}\bigg|
\sum_{i\in[\sfd_\ep]}\wt{q}^\ep_i(\theta_\ep)\bigg|
\nn\\&\leq&
4(\constj)^2
\sum_{r,r_1,r_2\in[\sfp_\ep]}
\bigg|
\sum_{i\in[\sfd_\ep]:\atop r(i)=r}
\sum_{j_1,j_2\in\checkN_i^\ep:\atop r(j_1)=r_1,r(j_2)=r_2}
\wt{\psi}^{\otimes2}_i(j_1,j_2)
\bigg|
\nn\\&=&
4(\constj)^2
\sum_{r,r_1,r_2\in[\sfp_\ep]}
\bigg|
\sum_{i\in[\sfd_\ep]:\atop r(i)=r}
\sum_{j_1,j_2\in[\sfd_\ep]}
1_{\{j_1,j_2\in\checkN_i^\ep,\>r(j_1)=r_1,r(j_2)=r_2\}}
\wt{\psi}^{\otimes2}_i(j_1,j_2)
\bigg|
\nn\\&=&
4(\constj)^2
\sum_{r,r_1,r_2\in[\sfp_\ep]}
\bigg|
\sum_{i\in[\sfd_\ep]:\atop r(i)=r}
1_{\{r_1,r_2\in N^\ep_r\}}
\wt{\psi}^{\otimes2}_i(r_1+u_\ep(i),r_2+u_\ep(i))
\bigg|
\nn\\&&
(\because\ j_1=r(j_1)+u_\ep(i)\text{ and }j_2=r(j_2)+u_\ep(i)\text{ when }j_1,j_2\in N^\ep_i\text{ due to (\ref{202508210319})})
\nn\\&=&
4(\constj)^2
\sum_{r,r_1,r_2\in[\sfp_\ep]}
1_{\{r_1,r_2\in N^\ep_r\}}
\bigg|
\sum_{i\in[\sfd_\ep]:\atop r(i)=r}
\wt{\psi}^{\otimes2}_i(r_1+u_\ep(i),r_2+u_\ep(i))
\bigg|. 
\eeas

\begin{comment}
\hrulefill
\beas
\caln_i^\ep&=&\caln_{r(i)}^\ep+u_\ep(i)\quad\text{with}\quad u_\ep(i)=\sfp_\ep\lfloor(i-1)/\sfp_\ep\rfloor. 
\eeas
Therefore, if $j_1\in\caln_i^\ep$, then $j_1=r(j_1)+u_\ep(i)$. \\
\hrulefill
\end{comment}

By the covariance inequality for $\alpha$-mixing (e.g. \cite{dedecker2003new}), 
\beas 
\big(E_{r,r_1,r_2}(i)\big)^2
&\leq&
E\bigg[
\bigg|
\sum_{i\in[\sfd_\ep]:\atop r(i)=r}
\wt{\psi}^{\otimes2}_i(r_1+u_\ep(i),r_2+u_\ep(i))
\bigg|^2
\bigg]
\nn\\&\leq&
\sum_{i\in[\sfd_\ep]:\atop r(i)=r}\sum_{i'\in[\sfd_\ep]:\atop r(i')=r}
\bigg|\text{Cov}\bigg[\wt{\psi}^{\otimes2}_i(r_1+u_\ep(i),r_2+u_\ep(i)),\>\wt{\psi}^{\otimes2}_i(r_1+u_\ep(i'),r_2+u_\ep(i'))\bigg]\bigg|
\nn\\&\leq&
4\constu^2\sum_{m\in[n]}\sum_{m'\in[n]}\alpha^\ep(|m-m'|)
\nn\\&\leq&
8\constu^2\sum_{h=0}^{n-1}(n-h)\alpha^\ep(h)
\eeas
for all $r,r_1,r_2\in[\sfp_\ep]$. 
Combine this with (\ref{202508210427}) to obtain 
\beas
E\bigg[
\sup_{\theta_\ep\in\Theta_\ep}\bigg|
\sum_{i\in[\sfd_\ep]}\wt{q}^\ep_i(\theta_\ep)\bigg|\bigg]
&\leq&
8\sqrt{2}(\constj)^2\constu
\sum_{r,r_1,r_2\in[\sfp_\ep]}
1_{\{r_1,r_2\in N^\ep_r\}}
\bigg\{\sum_{h=0}^{n-1}(n-h)\alpha^\ep(h)\bigg\}^{1/2}
\nn\\&\leq&
8\sqrt{2}(\constj)^2\constu\>\sfp_\ep(\max_{i\in[\sfp_\ep]}\sfk^\ep_i)^2
\bigg\{\sum_{h=0}^{n-1}(n-h)\alpha^\ep(h)\bigg\}^{1/2}. 
\eeas
This completes the proof. 
\qed

We stress the fact that, in the repeated measurements model, 
$Q^\ep(\theta_\ep)$ of (\ref{202508171722}), 
$\varphi_\ep$ of (\ref{202508171719}) and 
$\Omega^\ep_0(\nu)$ of (\ref{202508210601}) 
depend on $n$. 
From (\ref{202508171722}) and (\ref{202508171719}), 
\beas 
\varphi_\ep
&=& 
3^{-1}T^{-1}(\sfd_\ep)^{-1}
\sup_{\theta_\ep\in\Theta_\ep}\bigg|\sum_{i\in[\sfd_\ep]}\wt{q}^\ep_i(\theta_\ep)\bigg|. 
\eeas

Moreover, recall that 
from \eqref{202508210610}
we have
\bea \label{202508210610-2}
E\big[G^\ep(\theta_\ep,\constd)\big]
&\geq&
3^{-1}\constk T^{-1}\ep^\constv\rho_\ep(\theta_\ep,\theta_\ep^*)^2. 
\eea

Then, for $\Omega^\ep_0(\nu)$ of (\ref{202508210601}), 
\beas 
P\big[(\Omega^\ep_0(\nu))^c\big]
&=&
P\big[\varphi_\ep>2^{-1}\inf_{\theta_\ep\in U_\ep(\nu)^c}E[Q^\ep(\theta_\ep)]\big]
\nn\\&\leq&
P\bigg[
(\sfd_\ep)^{-1}
\sup_{\theta_\ep\in\Theta_\ep}\bigg|\sum_{i\in[\sfd_\ep]}\wt{q}^\ep_i(\theta_\ep)\bigg|
>2^{-1}\constk \ep^\beta \nu^2
\bigg]
\nn\\&&
(\because G^\ep(\theta_\ep,\constd)=\ep^\constd Q^\ep(\theta_\ep)\text{and }(\ref{202508210610-2}))
\nn\\&\leq&
(\sfd_\ep)^{-1}2\constk^{-1}
 \ep^{-\beta}
 \nu^{-2}
E\bigg[
\sup_{\theta_\ep\in\Theta_\ep}\bigg|\sum_{i\in[\sfd_\ep]}\wt{q}^\ep_i(\theta_\ep)\bigg|
\bigg]
\nn\\&\leq&
{16\sqrt{2}}(\constj)^2\constu\constk^{-1}
 \ep^{-\beta} 
\nu^{-2}
(\max_{i\in[\sfp_\ep]}\sfk^\ep_i)^2\>
n^{-1}
\bigg\{\sum_{h=0}^{n-1}(n-h)\alpha^\ep(h)\bigg\}^{1/2}
\eeas
by Lemma \ref{202508202025}.

\subsection{Proof of \autoref{thm:rates}}\label{sec:proof-rates}

We prove the following: for a given deterministic rate $r_\ep \to 0$, for every $\bar \nu > 0$,
\[
P\Big( \rho_\ep(\wh{\theta}_\ep^r,\theta_\ep^*) > r_\ep \bar \nu \Big) \to 0
\qquad\text{as }\ep\to0.
\]

Result  \textit{(i)} immediately follows from \autoref{thm:cons}. To see this, simply let $r_\epsilon^{-1} = \log \ep^{-1} $ and 
evaluate terms in the proof of \autoref{thm:cons} at  $\nu = \nu_\ep = \bar \nu  / \log \ep^{-1}$, for some positive, fixed $\bar \nu$.  The additional logarithmic term does not affect the convergence arguments, as every term in the proof converges at least polynomially in $\ep$. 

For the proof of $(ii)$, we set $r_\ep = \ep^\gamma$. Therefore, we need to evaluate the terms appearing in the proof of \autoref{thm:cons} at $\nu = \nu_\ep = \bar \nu \ep^{\gamma}$, for some positive, fixed $\bar \nu$. We describe below the key changes in the convergence arguments.

\paragraph{Convergence of $P\big[T(\ep,A_\ep)<T\big]$.}
The change in the rate for $n_\ep$, appearing in  $\sfd_\ep = n_\ep \sfp_\ep$, does not affect the convergence 
in \eqref{eq:te-bound}. 

\paragraph{Convergence of $\mathds 1_{\big\{
\constr\ep^{(\constv)/2}\nu_\ep<(I^\ep(\constd)^{1/2})\vee(4\Delta^\ep_r)\big\}}$.}
Equation \eqref{eq:step3-1}
\[
    \constr^2\ep^{\constv}\nu_\ep^2 \geq  I^\ep(\constd)
\]
is now determined by the behavior of $\ep^{4\constd/3 } / \ep^{2 \gamma + \constv}$;
the inequality is eventually satisfied as $\ep \to 0$ if
$c_1  > 3 (\beta + 2 \gamma)$, as in \eqref{eq:bc5}.
Similarly, due to \eqref{eq:delta-inf-bound}, the 
inequality 
\[ \constr\ep^{(\constv)/2}\bar \nu \ep^\gamma \geq4\Delta^\ep_\infty \]
holds eventually  if
$\alpha - \lS > (\constv)/2 + \gamma$, that is if \eqref{eq:bc6} holds.

\paragraph{Convergence of $\sfd_\ep e^{-(\xi^\ep_r)^2}$.}
In this case, under Assumption \ref{as:balance2}, 
\begin{align*}
    \xi^\ep_r & = 
\constr\constn A_\ep^{-1}\ep^{-1+(\constv)/2} \bar \nu  \ep^\gamma \exp\bigg(-\consto(\max_{i\in[\sfp_\ep]}\sfk^\ep_i)^{r-1}{\sf o}_\ep\bigg)
\\
& \gtrsim \left( \log \frac 1 \ep \right)^{-\frac{1 + \lA}{2(r-1)}} \ep^{-1+(\constv)/2 + \lS + \gamma } \to \infty. 
\end{align*}

\paragraph{Convergence of $P\big[(\Omega^\ep_0(\nu_\ep))^c\big]$.}

Again, as a consequence of Lemma \ref{lem:omega-bound}, under Assumption \ref{as:mixing} and \ref{as:balance2},
we have that
\begin{align}
    \label{eq:p-omega-bound}P\big[(\Omega^\ep_0(\nu))^c\big]
    &\leq
    {16\sqrt{2}}(\constj)^2\constu\constk^{-1}\>
    \bigg\{\sum_{h=0}^{n_\ep-1}(1-n_\ep^{-1}h)\alpha^\ep(h)\bigg\}^{1/2}
    \epsilon^{-\beta} \nu_\ep^{-2}
    \left (\max_{i\in[\sfp_\ep]}\sfk^\ep_i\right)^2\>
    n_\ep^{-1/2}
    \\ & \lesssim \frac{\sfk_\ep^2 \nu_\ep^{-2}}{\epsilon^{\beta}\sqrt{n_\ep}} \notag 
    \\ & \lesssim
    \left(\log\ep^{-1}\right)^{1/(r-1)} \frac{\ep^{2\gamma + \lN + \beta}}{\ep^{\beta + 2\gamma}}  \to 0. \notag 
\end{align}

In conclusion, by combining the above inequalities, we get 
\begin{align}\label{eq:err-bound-O}
    P\left( \rho_\epsilon(\wh{\theta}_\ep^r, \theta_\ep^*) > r_\epsilon \bar \nu \right)
    &\leq 
    \sfd_\ep e^{-(\xi^\ep_r)^2} + 
P\big[(\Omega^\ep_0(\nu_\ep))^c\big] + 
P\big[T(\ep,A_\ep)<T\big] 
\notag \\ & \quad 
 + \mathds 1_{\big\{
\constr\ep^{(\constv)/2}\nu_\ep<(I^\ep(\constd)^{1/2})\vee(4\Delta^\ep_r)\big\}}
\notag 
\\ & \lesssim
O\left(\left(\log\ep^{-1}\right)^{1/(r-1)}  \epsilon^{\lN} \right) 
\end{align}

\printbibliography

\end{document}